\newcommand{\bpt}[1]{\mathring{#1}}
\newtheorem{theorem}{Theorem}[section]
\newtheorem{lemma}[theorem]{Lemma}
\newtheorem{corollary}[theorem]{Corollary}
\newtheorem{proposition}[theorem]{Proposition}
\newtheorem{algorithm}{Algorithm}
\newcommand{\inner}[2]{\langle #1,#2\rangle}
\newcommand{\norm}[1]{\|{#1}\|}
\newcommand\set[1]{\{#1\}}
\newcommand{\R}{\mathbb{R}}
\newcommand{\tos}{\rightrightarrows}
\newcommand{\comenta}[1]{}
\newcommand{\HH}{\mathcal{H}}
\newcommand{\mgap}{\vspace{.1in}}
\begin{document}

\title{Iteration complexity of an inexact Douglas-Rachford method and of a Douglas-Rachford-Tseng's F-B four-operator splitting
method for solving monotone inclusions}
\author{}
\author{
    M. Marques Alves
\thanks{
Departamento de Matem\'atica,
Universidade Federal de Santa Catarina,
Florian\'opolis, Brazil, 88040-900 ({\tt maicon.alves@ufsc.br}).
The work of this author was partially supported by CNPq grants no.
406250/2013-8, 306317/2014-1 and 405214/2016-2.}
\and
Marina Geremia
\thanks{
Departamento de Matem\'atica,
Universidade Federal de Santa Catarina,
Florian\'opolis, Brazil, 88040-900 ({\tt marinageremia@yahoo.com.br}).
%The work of this author was partially supported by CNPq grants no.
%????????.
}
}

%\date{May 9, 2014}
%\date{}

%\begin{document}

\maketitle

\begin{abstract}
In this paper, we propose and study the iteration complexity of an inexact Douglas-Rachford splitting (DRS) method and
a Douglas-Rachford-Tseng's forward-backward (F-B) splitting method for solving two-operator and
four-operator monotone inclusions, respectively. The former method (although based on a slightly different mechanism of iteration) 
is motivated by the recent work of J. Eckstein and W. Yao, in which an inexact DRS method is derived from a special instance
of the hybrid proximal extragradient (HPE) method of Solodov and Svaiter, while the latter one combines the proposed 
inexact DRS method (used as an outer iteration) with a Tseng's F-B splitting type method (used as an inner iteration) for 
solving the corresponding subproblems. We prove iteration complexity bounds for both algorithms in the pointwise (non-ergodic) as
well as in the ergodic sense by showing that they admit two different iterations: one that can be embedded into 
the HPE method, for which the iteration complexity is known since the work of Monteiro and Svaiter, and another one which demands a separate analysis. 
Finally, we perform simple numerical experiments 
%on three-operator and four-operator monotone inclusions 
to show
the performance of the proposed methods when compared with other existing algorithms.   
\\
\\ 
  2000 Mathematics Subject Classification: 47H05, 49M27, 90C25.%, 49J52, 47N10.
 \\
 \\
  Key words: Inexact Douglas-Rachford method; splitting; monotone operators; HPE method; complexity; 
	Tseng's forward-backward method.
\end{abstract}

\pagestyle{plain}

%%%%%%%%%%%%%%%%%%%%%%%%%%%%%%%%%%%%%%%%%%%%%%%%%%%%%%%%%%%%%%%%%%%%%%%%%%%%%%%%%%%%%%%%

\section{Introduction}
Let $\HH$ be a real Hilbert space. In this paper, we consider the  \emph{two-operator monotone inclusion problem} (MIP) of finding $z$ such that 
\begin{align}
 \label{eq:mip.i}
 0\in A(z)+B(z)
\end{align}
as well as the \emph{four-operator} MIP
\begin{align}
 \label{eq:drti}
 0\in A(z)+C(z)+F_1(z)+F_2(z)
\end{align}
where $A$, $B$ and $C$ are (set-valued) maximal monotone operators on $\HH$, $F_1: D(F_1)\to \HH$ 
is (point-to-point) \emph{Lipschitz continuous}
and $F_2:\HH\to \HH$ is (point-to-point) \emph{cocoercive} (see Section \ref{sec:drt} for the precise statement). 
Problems \eqref{eq:mip.i} and \eqref{eq:drti} appear in different fields of applied mathematics and optimization including convex optimization, signal
processing, PDEs, inverse problems, among others~\cite{bau.com-book,glo.osh.yin-spl.spi16}. 
Under mild conditions on the operators $C$, $F_1$ and $F_2$, problem \eqref{eq:drti} becomes a special instance of \eqref{eq:mip.i} 
with $B:=C+F_1+F_2$. This fact will be important later on in this paper.

In this paper, we propose and study the iteration complexity of an inexact Douglas-Rachford splitting method (Algorithm \ref{inexact.dr}) and of
a Douglas-Rachford-Tseng's forward-backward (F-B) four-operator splitting method (Algorithm \ref{drt}) for solving \eqref{eq:mip.i}
and \eqref{eq:drti}, respectively. 
The former method is inspired and motivated (although based on a slightly different mechanism of iteration) by the recent work of J. Eckstein and W. Yao~\cite{eck.yao-rel.mp17}, while the latter one, which, in particular, will
be shown to be a special instance of the former one, is motivated by some variants of the standard Tseng's F-B
splitting method~\cite{tse-mod.sjco00} recently proposed in the current 
literature~\cite{alv.mon.sva-reg.siam16,arias.davis-half,MonSva10-1}. 
For more detailed information about the contributions of this paper in the light of reference \cite{eck.yao-rel.mp17}, we refer the reader to the first remark after Algorithm \ref{inexact.dr}. Moreover, we mention that 
Algorithm \ref{drt} is a purely primal splitting method for solving the \emph{four-operator} MIP \eqref{eq:drti}, and this seems to be new. The main contributions of this paper will be discussed in Subsection \ref{subsec:main.cont}.

\subsection{The Douglas-Rachford splitting (DRS) method}
 \label{subsec:drsmi}

One of the most popular algorithms for finding approximate solutions of \eqref{eq:mip.i}
is the \emph{Douglas-Rachford splitting} (DRS) \emph{method}. It consists of an iterative procedure in 
which at each iteration the resolvents $J_{\gamma A}=(\gamma A+I)^{-1}$ and
$J_{\gamma B}=(\gamma B+I)^{-1}$ of $A$ and $B$, respectively, are employed separately instead of the 
resolvent $J_{\gamma(A+B)}$ of the full operator $A+B$, which may be expensive to compute numerically. 
An iteration of the method can be described by 
\begin{align}
 \label{eq:dr.ite}
 z_{k}= J_{\gamma A}(2J_{\gamma B}(z_{k-1})-z_{k-1})+z_{k-1}-J_{\gamma B}(z_{k-1})\qquad \forall k\geq 1,
\end{align}
where $\gamma>0$ is a scaling parameter and $z_{k-1}$ is the current iterate. 
Originally proposed in \cite{dou.rac-num.tams56} for solving problems with linear operators, the DRS method was generalized 
in~\cite{lio.mer-spl.sjna79}
for general nonlinear maximal monotone operators, where the formulation \eqref{eq:dr.ite} was first obtained. 
It was proved in \cite{lio.mer-spl.sjna79} that $\{z_k\}$ converges (weakly, in infinite dimensional Hilbert spaces) 
to some $z^*$ such that $x^*:=J_{\gamma B}(z^*)$ is a solution of \eqref{eq:mip.i}. 
Recently, \cite{sva-wea.sjco11} proved the (weak) convergence of the sequence $\{z_k\}$ generated in \eqref{eq:dr.ite} 
to a solution of \eqref{eq:mip.i}.

%%%%%%%%%%%%%%%%%%%%

\subsection{The Rockafellar's proximal point (PP) method}
\label{subsec:ppmi}

The \emph{proximal point}  (PP) \emph{method} is an iterative method for seeking
approximate solutions of the MIP
\begin{align}
 \label{eq:mip.Ti}
 0\in T(z)
\end{align}
where $T$ is a maximal monotone operator on $\HH$ for which the solution set of \eqref{eq:mip.Ti}
is nonempty. In its exact formulation, an iteration of the PP method can
be described by
\begin{align}
 \label{eq:exact.prox}
 z_k= (\lambda_k T+I)^{-1}z_{k-1}\qquad \forall k\geq 1,
\end{align}
where $\lambda_k>0$ is a stepsize parameter and 
$z_{k-1}$ is the current iterate. 
It is well-known that the practical applicability of numerical schemes based on the exact computation of resolvents
of monotone operators strongly depends on strategies that allow for inexact computations. This is the case of 
the PP method \eqref{eq:exact.prox}. In his pioneering work~\cite{roc-mon.sjco76}, Rockafellar proved that 
if, at each iteration $k\geq 1$, $z_k$ is computed satisfying
\begin{align}
 \label{eq:inexact.prox}
 \norm{z_k-(\lambda_k T+I)^{-1}z_{k-1}}\leq e_k,\quad \sum_{k=1}^\infty\,e_k<\infty,
\end{align}
and $\{\lambda_k\}$ is bounded away from zero, then $\{z_k\}$ converges (weakly, in infinite dimensions) to a solution 
of \eqref{eq:mip.Ti}. This result has found important applications in the design and analysis of many practical 
algorithms for solving challenging problems in optimization and related fields.

\subsection{The DRS method is an instance of the PP method (Eckstein and Bertsekas)}
 \label{subsec:drsm.eh.ppm}

In~\cite{eck.ber-dou.mp92}, the DRS method \eqref{eq:dr.ite} was shown to be a special instance of the PP method \eqref{eq:exact.prox} with $\lambda_k\equiv 1$. 
More precisely, it was observed in~\cite{eck.ber-dou.mp92} (among other results) that the sequence
$\{z_k\}$ in \eqref{eq:dr.ite} satisfies
\begin{align}
  \label{eq:car.dr.s}
 z_{k}=(S_{\gamma,\,A,\,B}+I)^{-1}z_{k-1}\qquad \forall k\geq 1,
\end{align}
where $S_{\gamma,\,A,\,B}$ is the maximal monotone operator on $\HH$ 
whose graph is
\begin{align}
 \label{eq:def.s}
 S_{\gamma,\,A,\,B}=\left\{(y+\gamma b,\gamma a+\gamma b)\in \mathcal{H}\times \HH\;|\;b\in B(x),\,a\in A(y),\,
\gamma a+y=x-\gamma b\right\}.
\end{align}
It can be easily checked that $z^*$ is a solution of \eqref{eq:mip.i} if and only if 
$z^*=J_{\gamma B}(x^*)$ for some $x^*$ such that $0\in S_{\gamma,\,A\,B}(x^*)$.
The fact that \eqref{eq:dr.ite} is equivalent to \eqref{eq:car.dr.s} clarifies the proximal nature of the DRS method and allowed \cite{eck.ber-dou.mp92} to obtain inexact and 
relaxed versions of it by alternatively describing \eqref{eq:car.dr.s}
according to the following procedure:
\begin{align}
 %\begin{aligned}
\label{eq:dr.spp} 
&\mbox{compute}\;\; (x_k,b_k)\;\;\mbox{such that}\;\; b_k\in B(x_k)\;\;\mbox{and}\;\;\gamma b_k+x_k=z_{k-1};\\ 
\label{eq:dr.spp02} 
 &\mbox{compute}\;\; (y_k,a_k)\;\;\mbox{such that}\;\; a_k\in A(y_k)\;\;\mbox{and}\;\;\gamma a_k+y_k=x_k-\gamma b_k;\\
\label{eq:dr.spp03} 
 &\mbox{set}\;\; z_{k}=y_k+\gamma b_k.
%\end{aligned}
\end{align}
%
%Moreover, \cite{eck.ber-dou.mp92} also observed that 
%if, for all $k\geq 1$,
%
%\begin{align}
% \label{eq:899}
% \begin{aligned}
%&x_k:=J_{\gamma B}(z_{k-1}),\quad b_k:=\gamma^{-1}(z_{k-1}-x_k),\\
%&y_k:=J_{\gamma A}(2x_k-z_{k-1}),\quad 
%a_k:=\gamma^{-1}\left[(x_k-\gamma b_k)-y_k\right],
 %\end{aligned}
%\end{align}
%
%then it follows from the equivalence $x=J_{\gamma B}z \iff \gamma^{-1}(z-x)\in B(x)$ and from \eqref{eq:dr.ite}
%that $b_k\in B(x_k)$, $a_k\in A(y_k)$ and $z_{k}=y_k+\gamma b_k$.
%
%In particular, it follows from the latter identity and \eqref{eq:899} that 
%the DRS method \eqref{eq:dr.ite} 
%can alternatively be described by the following procedure:
%
%\begin{align}
 %\begin{aligned}
%\label{eq:dr.spp} 
%&\mbox{compute}\;\; (x_k,b_k)\;\;\mbox{such that}\;\; b_k\in B(x_k)\;\;\mbox{and}\;\;\gamma b_k+x_k=z_{k-1};\\ 
%\label{eq:dr.spp02} 
% &\mbox{compute}\;\; (y_k,a_k)\;\;\mbox{such that}\;\; a_k\in A(y_k)\;\;\mbox{and}\;\;\gamma a_k+y_k=x_k-\gamma b_k;\\
%\label{eq:dr.spp03} 
% &\mbox{set}\;\; z_{k}=y_k+\gamma b_k.
%\end{aligned}
%\end{align}

\subsection{The hybrid proximal extragradient (HPE) method of Solodov and Svaiter}
 \label{subsec:hpei}

Many modern inexact versions of the PP method, as opposed to the summable error criterion \eqref{eq:inexact.prox}, use \emph{relative error tolerances} for solving 
the associated subproblems. The first method of this type was proposed in \cite{sol.sva-hyb.jca99}, and subsequently
studied, e.g., in~\cite{MonSva10-1,mon.sva-hpe.siam10,MonSva10-2,sol.sva-hyb.svva99,sol.sva-hyb.jca99,sol.sva-ine.mor00,Sol-Sv:hy.unif}. The key idea consists of decoupling \eqref{eq:exact.prox} in an inclusion-equation
system:
\begin{align}
  \label{eq:dec.prox}
  v\in T(z_+),\quad  \lambda v+z_{+}-z=0,
\end{align} 
where $(z,z_+,\lambda):=(z_{k-1},z_k,\lambda_k)$, and relaxing \eqref{eq:dec.prox} 
within relative error tolerance criteria.
Among these new methods, the \emph{hybrid proximal extragradient} (HPE) 
method of Solodov and Svaiter \cite{sol.sva-hyb.svva99}, which we discuss in details in Subsection \ref{sec:pp}, has been shown to be 
very effective as a framework for the design and analysis 
of many concrete algorithms 
(see, e.g.,~\cite{bot.cse-hyb.nfao15,cen.mor.yao-hyb.jota10,eck.sil-pra.mp13,he.mon-acc.siam16,ius.sos-pro.opt10,lol.par.sol-cla.jca09,mon.ort.sva-imp.coap14,mon.ort.sva-ada.coap16,MonSva10-2,sol.sva-hyb.svva99,sol.sva-ine.mor00,Sol-Sv:hy.unif}).

\subsection{The main contributions of this work}
 \label{subsec:main.cont}

In \cite{eck.yao-rel.mp17},  J. Eckstein and W. Yao proposed and studied the (asymptotic) convergence of an 
inexact version of the DRS method \eqref{eq:dr.ite} 
by applying a special instance of HPE method to the maximal monotone operator given in \eqref{eq:def.s}. 
The resulting algorithm (see \cite[Algorihm 3]{eck.yao-rel.mp17}) allows for inexact computations \emph{in the equation} in \eqref{eq:dr.spp} 
and, in particular, resulted in an inexact version of the ADMM which is suited for large-scale problems, in which fast inner solvers can
be employed for solving the corresponding subproblems (see \cite[Section 6]{eck.yao-rel.mp17}).

In the present work, motivated by \cite{eck.yao-rel.mp17}, we first propose in Section \ref{sec:dr}
an inexact version of the DRS method (Algorithm \ref{inexact.dr}) for solving \eqref{eq:mip.i} in which inexact computations 
are allowed \emph{in both the inclusion and the equation} in \eqref{eq:dr.spp}.
At each iteration, instead of a point in the graph of $B$, Algorithm \ref{inexact.dr} computes a point in the graph of the $\varepsilon$-enlargement 
$B^\varepsilon$ of $B$ (it has the property that $B^{\varepsilon}(z)\supset B(z)$). Moreover, contrary to the reference \cite{eck.yao-rel.mp17}, we study the \emph{iteration complexity} of the proposed method for solving \eqref{eq:mip.i}. 
%Similarly to \cite{eck.yao-rel.mp17}, 
We show that Algorithm \ref{inexact.dr} admits two type of iterations, one that can be embedded into the HPE method and, on the other hand, another one which demands a separate analysis. We emphasize again that, although motivated by the latter reference, the Douglas-Rachford type method
proposed in this paper is based on a slightly different mechanism of iteration, specially designed
for allowing its iteration complexity analysis (see Theorems \ref{th:idr.main} and \ref{th:idr.main.erg}).

Secondly, in Section \ref{sec:drt}, we consider the four-operator MIP \eqref{eq:drti} and propose and study the iteration complexity
of a Douglas-Rachford-Tseng's F-B splitting type method (Algorithm \ref{drt}) 
which combines Algorithm \ref{inexact.dr} (as an outer iteration) and
a Tseng's F-B splitting type method (Algorithm \ref{hff}) (as an inner iteration) for solving the corresponding subproblems. The
resulting algorithm, namely Algorithm \ref{drt}, has a splitting nature and solves \eqref{eq:drti} without introducing extra variables.
 
Finally, in Section \ref{sec:num}, we perform simple numerical experiments 
%on three-operator and four-operator monotone inclusions 
to show
the performance of the proposed methods when compared with other existing algorithms.

\subsection{Most related works}
 \label{subsec:mrw}

In \cite{ari-for.opt15}, the relaxed forward-Douglas-Rachford splitting (rFDRS) method was proposed and studied to solve \emph{three-operator MIPs} 
consisting of \eqref{eq:drti} with $C=N_V$, $V$ closed vector subspace, and $F_1=0$. 
Subsequently, among other results, the iteration complexity of the latter method
(specialized to variational problems) was analyzed in \cite{dav-con.sjo15}.
 Problem \eqref{eq:drti} with $F_1=0$ was also considered in
\cite{dav.yin-thr.svva17}, where a three-operator splitting (TOS) method was proposed and its iteration complexity studied. 
On the other hand,
problem \eqref{eq:drti} with $C=N_V$ and $F_2=0$ was studied in \cite{ari-for.jota15}, where the forward-partial inverse-forward splitting
method was proposed and analyzed. In \cite{arias.davis-half}, a Tseng's F-B splitting type method was proposed and analyzed to solve
the special instance of \eqref{eq:drti} in which $C=0$.

The iteration complexity of a relaxed Peaceman-Rachford splitting method for solving \eqref{eq:mip.i} was recently
studied in \cite{mon.che-com.17}. The method of \cite{mon.che-com.17} was shown to be a special instance of a non-Euclidean
HPE framework, for which the iteration complexity was also analyzed in the latter reference (see also \cite{gon.mel.mon-imp.pre16}). 
Moreover, as we mentioned earlier, an inexact version of the DRS method for solving \eqref{eq:mip.i}
was proposed and studied in \cite{eck.yao-rel.mp17}. 

%\subsection{General notation} 
%\label{subsec:not}
 %We denote by $\HH$ a real Hilbert space with inner product $\inner{\cdot}{\cdot}$
%and induced norm $\|\cdot\|:=\sqrt{\inner{\cdot}{\cdot}}$.

\section{Preliminaries and background materials}
 \label{sec:pre}

\subsection{General notation and $\varepsilon$-enlargements}
\label{sec:gn}

 We denote by $\HH$ a real Hilbert space with inner product $\inner{\cdot}{\cdot}$
and induced norm $\|\cdot\|:=\sqrt{\inner{\cdot}{\cdot}}$ and by $\HH\times \HH$ the product Cartesian endowed with
usual inner product and norm.

A set-valued map $T:\HH\tos \HH$ is said to be a \emph{monotone operator} on $\HH$ if $\inner{z-z'}{v-v'}\geq 0$
for all $v\in T(z)$ and $v'\in T(z')$. Moreover, $T$ is a \emph{maximal monotone operator} if $T$ is monotone and $T=S$ whenever
$S$ is monotone on $\HH$ and $T\subset S$. Here, we identify any monotone operator $T$ with its graph, i.e., we
set $T=\{(z,v)\in \HH\times \HH\,|\,v\in T(z)\}$. The \emph{sum} $T+S$ of two set-valued maps $T,S$ is defined via
 the usual Minkowski sum and
for $\lambda\geq 0$ the operator $\lambda T$ is defined by $(\lambda T)(z)= \lambda T(z):=\{\lambda v\,|\,v\in T(z)\}$. 
The \emph{inverse} of $T:\HH\tos \HH$ is $T^{-1}:\HH\tos \HH$ defined by $v\in T^{-1}(z)$ if and only if $z\in T(v)$. In particular,
$\mbox{zer}(T):=T^{-1}(0)=\{z\in \HH\,|\,0\in T(z)\}$. The \emph{resolvent} of a maximal monotone operator $T$ is
$J_T:=(T+I)^{-1}$, where $I$ denotes the identity map on $\HH$, and, in particular, the following holds: 
$x=J_{\lambda T}(z)$ if and only if $\lambda^{-1}(z-x)\in T(x)$ if and only if $0\in \lambda T(x)+x-z$. We denote by
$\partial_\varepsilon f$ the usual $\varepsilon$-subdifferential of a proper closed convex function $f:\HH\to (-\infty,+\infty]$
and by $\partial f:=\partial f_0$ the Fenchel-subdifferential of $f$ as well. The \emph{normal cone} of a closed convex set
$X$ will be denoted by $N_X$ and by $P_X$ we denote the orthogonal projection onto $X$.

For $T:\HH\tos\HH$ maximal monotone and $\varepsilon\geq 0$, the $\varepsilon$-enlargement~\cite{bur.ius.sva-enl.svva97,leg.the-sub.svva96} of 
$T$
is the operator $T^{\varepsilon}:\HH\tos\HH$ defined by
\begin{align}
 \label{eq:def.teps}
 T^{\varepsilon}(z):=\{v\in \HH\;|\;\inner{z-z'}{v-v'}\geq -\varepsilon\;\;\forall (z',v')\in T\}\quad \forall z\in \HH.
\end{align}
Note that $T(z)\subset T^{\varepsilon}(z)$ for all $z\in \HH$.

The following summarizes some useful properties of $T^{\varepsilon}$ which will be useful
in this paper.
\begin{proposition}
\label{pr:teps}
Let $T, S:\HH\tos \HH$ be set-valued maps.  Then,
\begin{itemize}
\item[\emph{(a)}] if $\varepsilon \leq \varepsilon'$, then
$T^{\varepsilon}(x)\subseteq T^{\varepsilon'}(x)$ for every $x \in \HH$;
\item[\emph{(b)}] $T^{\varepsilon}(x)+S^{\,\varepsilon'}(x) \subseteq
(T+S)^{\varepsilon+\varepsilon'}(x)$ for every $x \in \HH$ and
$\varepsilon, \varepsilon'\geq 0$;
\item[\emph{(c)}] $T$ is monotone if, and only if, $T  \subseteq T^{0}$;
\item[\emph{(d)}] $T$ is maximal monotone if, and only if, $T = T^{0}$;
%\item [\emph{(e)}] if $f:X\to\overline{\R}:=\R\cup\{-\infty,+\infty\}$ is proper, convex and closed, then
 % $\partial_\varepsilon f(x)\subseteq (\partial f)^{\varepsilon}(x)$ for
 % any $\varepsilon \geq 0$ and $x\in \HH$.
\end{itemize}
\end{proposition}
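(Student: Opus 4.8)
The plan is to obtain all four claims directly from the defining inequality \eqref{eq:def.teps}; each is a short manipulation of inner products, so no deeper machinery is needed. I would treat the items in the order (a), (b), (c), (d), since (c) and (d) make light use of the earlier parts.

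For item (a), I would note that if $v\in T^{\varepsilon}(x)$ then $\inner{x-z'}{v-v'}\geq -\varepsilon\geq -\varepsilon'$ for every $(z',v')\in T$, which is precisely the condition defining membership in $T^{\varepsilon'}(x)$. For item (b), given $v\in T^{\varepsilon}(x)$, $w\in S^{\,\varepsilon'}(x)$ and an arbitrary pair $(z',u')\in T+S$, I would write $u'=v'+w'$ with $v'\in T(z')$ and $w'\in S(z')$, and add the two defining inequalities to get $\inner{x-z'}{(v+w)-u'}=\inner{x-z'}{v-v'}+\inner{x-z'}{w-w'}\geq -\varepsilon-\varepsilon'$, i.e. $v+w\in (T+S)^{\varepsilon+\varepsilon'}(x)$.

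For item (c), the implication ``$\Rightarrow$'' is essentially a restatement of monotonicity: if $T$ is monotone then every $(x,v)\in T$ satisfies $\inner{x-z'}{v-v'}\geq 0$ for all $(z',v')\in T$, that is, $v\in T^{0}(x)$; and ``$\Leftarrow$'' follows by picking two arbitrary pairs $(x,v),(z',v')\in T$ and using $(x,v)\in T\subseteq T^0$ to conclude $\inner{x-z'}{v-v'}\geq 0$. For item (d), assuming $T$ maximal monotone, part (c) already gives $T\subseteq T^0$, and if $(x,v)\in T^0$ then $T\cup\{(x,v)\}$ is still monotone, so maximality forces $(x,v)\in T$, hence $T=T^0$; conversely, if $T=T^0$ then $T$ is monotone by (c), and given any monotone $S$ with $T\subseteq S$ and any $(x,v)\in S$, monotonicity of $S$ tested against all pairs of $T$ shows $v\in T^0(x)=T(x)$, so $S\subseteq T$ and $T$ is maximal.

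I do not anticipate a genuine obstacle; the proposition is foundational and all four arguments are one or two lines. The only point worth a remark is that for the empty operator $T^0$ equals $\HH\times\HH$, so the equivalence in (d) tacitly uses that a maximal monotone operator is nonempty; alternatively, since (a)--(d) are classical facts established in \cite{bur.ius.sva-enl.svva97,leg.the-sub.svva96}, one could simply cite them, but I would include the short arguments above for completeness.
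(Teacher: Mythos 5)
Your arguments are correct: each of (a)--(d) follows exactly as you say from the defining inequality \eqref{eq:def.teps}, and the maximality argument in (d) (extend $T$ by a point of $T^0$, invoke maximality; conversely test an arbitrary extension against all pairs of $T$) is the standard one. The paper itself does not prove this proposition -- it states these properties as known facts from the $\varepsilon$-enlargement literature \cite{bur.ius.sva-enl.svva97,leg.the-sub.svva96} -- so there is no authorial proof to diverge from; your write-up simply supplies the routine verification those references contain. Your side remark on (d) is harmless but unnecessary: the empty operator is never maximal monotone (it admits proper monotone extensions), so neither implication is threatened.
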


Next we present the transportation formula
for $\varepsilon$-enlargements.

\begin{theorem}\emph{(\cite[Theorem 2.3]{bur.sag.sva-enl.col99})}
 \label{th:tf}
  Suppose $T:\HH\tos \HH$ is maximal monotone and
	let $z_\ell, v_\ell\in \HH$, $\varepsilon_\ell, \alpha_\ell\in \R_+$,
	for $\ell=1,\dots, j$, be such that
	 \[
	 v_\ell\in T^{\varepsilon_\ell}(z_\ell),\quad \ell=1,\dots, j,\quad  \sum_{\ell=1}^j\,\alpha_\ell=1,
	\]
	and define
	\[
	 \overline{z}_j:=\sum_{\ell=1}^j\,\alpha_\ell\, z_\ell\,,\quad 
	 \overline{v}_j:=\sum_{\ell=1}^j\,\alpha_\ell\; v_\ell\,,\quad
	 \overline{\varepsilon}_j:=\sum_{\ell=1}^j\,\alpha_\ell \left[\varepsilon_\ell+\inner{z_\ell-\overline{z}_j}
	 {v_\ell-\overline{v}_j}\right].
	\]
	 Then, the following hold:
	 \begin{itemize}
	 \item[\emph{(a)}] $\overline{\varepsilon}_j\geq 0$ and 
	 $\overline{v}_j\in T^{\overline{\varepsilon}_j}(\overline{z}_j)$.
	  \item[\emph{(b)}] If, in addition, $T=\partial f$ for some proper, convex and closed function
		$f$ and $v_\ell\in \partial_{\varepsilon_\ell} f(z_{\ell})$ for $\ell=1,\dots, j$,
		then $\overline{v}_j\in \partial_{\overline{\varepsilon}_j} f(\overline{z}_j)$.
	\end{itemize}
\end{theorem}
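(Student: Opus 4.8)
The plan is to prove both items by a direct manipulation of inner products, using only the defining inequality \eqref{eq:def.teps} of the $\varepsilon$-enlargement, the maximality of $T$ through Proposition \ref{pr:teps}(d), and, for item (b), the convexity of $f$. For item (a), I would fix an arbitrary $(z',v')\in T$ and expand, using $\overline{z}_j-z'=\sum_{\ell}\alpha_\ell(z_\ell-z')$, $\overline{v}_j-v'=\sum_{m}\alpha_m(v_m-v')$ and $\sum_{\ell}\alpha_\ell=1$,
\[
\langle\overline{z}_j-z',\overline{v}_j-v'\rangle=\sum_{\ell,m=1}^{j}\alpha_\ell\alpha_m\,\langle z_\ell-z',v_m-v'\rangle .
\]
Writing $v_m-v'=(v_\ell-v')-(v_\ell-v_m)$ and summing over $m$ first, with $\sum_m\alpha_m(v_\ell-v_m)=v_\ell-\overline{v}_j$, this becomes
\[
\langle\overline{z}_j-z',\overline{v}_j-v'\rangle=\sum_{\ell=1}^{j}\alpha_\ell\,\langle z_\ell-z',v_\ell-v'\rangle-\sum_{\ell=1}^{j}\alpha_\ell\,\langle z_\ell-z',v_\ell-\overline{v}_j\rangle .
\]
In the first sum $\langle z_\ell-z',v_\ell-v'\rangle\ge-\varepsilon_\ell$ for each $\ell$, since $v_\ell\in T^{\varepsilon_\ell}(z_\ell)$ and $(z',v')\in T$; in the second sum, since $\sum_\ell\alpha_\ell(v_\ell-\overline{v}_j)=0$, one may replace $z'$ by $\overline{z}_j$, so that it equals $\sum_\ell\alpha_\ell\langle z_\ell-\overline{z}_j,v_\ell-\overline{v}_j\rangle$. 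Collecting terms gives
\[
\langle\overline{z}_j-z',\overline{v}_j-v'\rangle\ge-\sum_{\ell=1}^{j}\alpha_\ell\bigl[\varepsilon_\ell+\langle z_\ell-\overline{z}_j,v_\ell-\overline{v}_j\rangle\bigr]=-\overline{\varepsilon}_j\qquad\text{for every }(z',v')\in T .
\]

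Next I would check that $\overline{\varepsilon}_j\ge0$; once this is known, the last display is exactly the assertion $\overline{v}_j\in T^{\overline{\varepsilon}_j}(\overline{z}_j)$ in the sense of \eqref{eq:def.teps}. If instead $\overline{\varepsilon}_j<0$, then $\langle\overline{z}_j-z',\overline{v}_j-v'\rangle\ge-\overline{\varepsilon}_j>0$ for all $(z',v')\in T$, so in particular $\overline{v}_j\in T^{0}(\overline{z}_j)=T(\overline{z}_j)$ by Proposition \ref{pr:teps}(d) (here the maximality of $T$ is used); hence $(\overline{z}_j,\overline{v}_j)\in T$, and taking $(z',v')=(\overline{z}_j,\overline{v}_j)$ in the strict inequality yields $0>0$, a contradiction. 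This proves (a).

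For item (b), with $T=\partial f$ and $v_\ell\in\partial_{\varepsilon_\ell}f(z_\ell)$, I would start from $f(x)\ge f(z_\ell)+\langle v_\ell,x-z_\ell\rangle-\varepsilon_\ell$ for all $x\in\HH$ and all $\ell$, take the convex combination $\sum_\ell\alpha_\ell(\cdot)$ of these inequalities, and use the elementary identity $\overline{\varepsilon}_j-\sum_\ell\alpha_\ell\varepsilon_\ell=\sum_\ell\alpha_\ell\langle v_\ell,z_\ell\rangle-\langle\overline{v}_j,\overline{z}_j\rangle$ (immediate from the definition of $\overline{\varepsilon}_j$) to rewrite the resulting bound as $f(x)\ge\sum_\ell\alpha_\ell f(z_\ell)+\langle\overline{v}_j,x-\overline{z}_j\rangle-\overline{\varepsilon}_j$. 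Jensen's inequality $\sum_\ell\alpha_\ell f(z_\ell)\ge f(\overline{z}_j)$ then gives $f(x)\ge f(\overline{z}_j)+\langle\overline{v}_j,x-\overline{z}_j\rangle-\overline{\varepsilon}_j$ for every $x\in\HH$, i.e.\ $\overline{v}_j\in\partial_{\overline{\varepsilon}_j}f(\overline{z}_j)$.

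The only genuinely delicate point is the bookkeeping in the double sum of item (a): one must split $v_m-v'$ appropriately and then recognize that the "off-diagonal" remainder reassembles into $\sum_\ell\alpha_\ell\langle z_\ell-\overline{z}_j,v_\ell-\overline{v}_j\rangle$, i.e.\ into $-\overline{\varepsilon}_j$. Once that identity is in place, the nonnegativity of $\overline{\varepsilon}_j$ is a short contradiction argument via maximality, and item (b) is an immediate consequence of convexity.
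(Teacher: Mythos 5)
Your proof is correct. There is nothing in the paper to compare it against: the statement is imported verbatim from \cite[Theorem 2.3]{bur.sag.sva-enl.col99} and no proof is given in this paper, so your argument stands as a complete, self-contained derivation along the standard lines of that reference. Specifically, the double-sum rearrangement yielding $\inner{\overline{z}_j-z'}{\overline{v}_j-v'}\geq -\overline{\varepsilon}_j$ for every $(z',v')\in T$ is exactly the right identity (the replacement of $z'$ by $\overline{z}_j$ in the remainder term is justified because $\sum_\ell\alpha_\ell(v_\ell-\overline{v}_j)=0$); the contradiction argument for $\overline{\varepsilon}_j\geq 0$ correctly isolates where maximality enters, via Proposition \ref{pr:teps}(d); and in part (b) the convex-combination step plus the identity $\overline{\varepsilon}_j-\sum_\ell\alpha_\ell\varepsilon_\ell=\sum_\ell\alpha_\ell\inner{v_\ell}{z_\ell}-\inner{\overline{v}_j}{\overline{z}_j}$ and Jensen's inequality (valid since each $z_\ell\in\dom f$) give precisely $\overline{v}_j\in\partial_{\overline{\varepsilon}_j}f(\overline{z}_j)$.
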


\subsection{The hybrid proximal extragradient (HPE) method}
\label{sec:pp}

Consider the \emph{monotone inclusion problem} (MIP) \eqref{eq:mip.Ti}, i.e.,
\begin{align}
 \label{eq:mip.T}
 0\in T(z)
\end{align}
where $T:\HH\tos \HH$ is a maximal monotone operator for which
the solution set $T^{-1}(0)$ of \eqref{eq:mip.T} is nonempty.  

As we mentioned earlier, the proximal point (PP) method of Rockafellar~\cite{roc-mon.sjco76} is one of the most popular algorithms for finding approximate solutions of \eqref{eq:mip.T} and, among the modern inexact versions of the PP method, 
the \emph{hybrid proximal extragradient} (HPE) 
method of \cite{sol.sva-hyb.svva99}, which we present in what follows, has been shown to be 
very effective as a framework for the design and analysis 
of many concrete algorithms 
(see e.g.~\cite{bot.cse-hyb.nfao15,cen.mor.yao-hyb.jota10,eck.sil-pra.mp13,
he.mon-acc.siam16,ius.sos-pro.opt10,lol.par.sol-cla.jca09,mon.ort.sva-imp.coap14,mon.ort.sva-ada.coap16,MonSva10-2,sol.sva-hyb.svva99,sol.sva-ine.mor00,Sol-Sv:hy.unif}). 

%Next, is the HPE method.

\mgap
\mgap

\noindent
\fbox{
\addtolength{\linewidth}{-2\fboxsep}%
\addtolength{\linewidth}{-2\fboxrule}%
\begin{minipage}{\linewidth}%[h]{6.6 in}
\begin{algorithm}
\label{hpe}
{\bf Hybrid proximal extragradient (HPE) method for \bf{(\ref{eq:mip.T})}}
\end{algorithm}
\begin{itemize}
\item[(0)] Let $z_0\in \HH$ and $\sigma\in [0,1)$ be given and set $j\leftarrow 1$.
\item [(1)] Compute $(\widetilde z_j,v_j,\varepsilon_j)\in \HH\times \HH\times \R_+$
and $\lambda_j>0$ such that
\begin{align}
\label{eq:hpe}
 \begin{aligned}
  v_j\in T^{\varepsilon_j}(\widetilde z_j),\quad  \norm{\lambda_j v_j+\widetilde z_j-z_{j-1}}^2+
 2\lambda_j\varepsilon_j \leq \sigma^2\norm{\widetilde z_j-z_{j-1}}^2.
\end{aligned}
\end{align} 
\item[(2)] Define 
 \begin{align}
  \label{eq:hpe2}
    z_j=z_{j-1}-\lambda_j v_j, 
   \end{align}
	set $j\leftarrow j+1$ and go to step 1.
   \end{itemize}
\noindent
 %{\bf end}
\end{minipage}
} %from fbox
\mgap
\mgap

\noindent
{\bf Remarks.} 
\begin{enumerate}
\item If $\sigma=0$ in \eqref{eq:hpe}, then it follows from Proposition \ref{pr:teps}(d)
and \eqref{eq:hpe2} that $(z_+,v):=(z_j,v_j)$
and $\lambda:=\lambda_j>0$ satisfy \eqref{eq:dec.prox}, which means that the HPE method 
generalizes the exact Rockafellar's PP method.
\item Condition \eqref{eq:hpe} clearly
relaxes both the inclusion and the
equation in \eqref{eq:dec.prox} within a relative error criterion. Recall that $T^{\varepsilon}(\cdot)$
denotes the $\varepsilon$-enlargement of $T$ and has the property
that $T^{\varepsilon}(z)\supset T(z)$ (see Subsection \ref{sec:gn} for details). Moreover, in \eqref{eq:hpe2}  an extragradient
step from the current iterate $z_{j-1}$ gives the next iterate $z_{j}$.
\item We emphasize that specific strategies for computing the triple $(\widetilde z_j,v_j,\varepsilon_j)$
as well as the stepsize $\lambda_j>0$ satisfying \eqref{eq:hpe} will depend on the particular instance of
the problem \eqref{eq:mip.T} under consideration. On the other hand, as mentioned before, the HPE method 
can also be used as a framework 
for the design and analysis of concrete algorithms for solving  specific instances
of \eqref{eq:mip.T}
(see, e.g., \cite{eck.sil-pra.mp13,mon.ort.sva-imp.coap14,mon.ort.sva-ada.coap16,MonSva10-1,mon.sva-hpe.siam10,MonSva10-2}).
We also refer the reader to Sections \ref{sec:dr} and \ref{sec:drt}, in this work, for applications of the HPE method
in the context of decomposition/splitting algorithms for monotone inclusions. 
\end{enumerate}

Since the appearance of the paper \cite{mon.sva-hpe.siam10}, we have seen an increasing interest
in studding the \emph{iteration complexity} of
the HPE method and its special instances (e.g., Tseng's forward-backward
splitting method, Korpelevich extragradient method and ADMM~\cite{MonSva10-1,mon.sva-hpe.siam10,MonSva10-2}).
This depends on the following termination
criterion~\cite{mon.sva-hpe.siam10}: given tolerances $\rho, \epsilon>0$, find
$z, v\in \HH$ and $\varepsilon>0$ such that
\begin{align}
  \label{eq:tc}
	 v\in T^{\varepsilon}(z),\quad \norm{v}\leq \rho, \quad \varepsilon\leq \epsilon. 
\end{align}
Note that, by Proposition \ref{pr:teps}(d), if $\rho=\epsilon=0$ in \eqref{eq:tc} then $0\in T(z)$, i.e., 
$z\in T^{-1}(0)$.

We now summarize the main results 
on \emph{pointwise (non ergodic)} and 
\emph{ergodic} iteration complexity~\cite{mon.sva-hpe.siam10} 
of the HPE method that will be used in this paper.
The \emph{aggregate stepsize sequence} 
$\{\Lambda_j\}$ and the \emph{ergodic sequences}
$\{\overline{\widetilde z}_j\}$, $\{\overline{v}_j\}$, 
$\{\overline{\varepsilon}_j\}$ associated
to $\{\lambda_j\}$ and
$\{\widetilde {z}_j\}$,  $\{v_j\}$, and
$\{\varepsilon_j\}$ are, respectively,
\begin{align}
\label{eq:d.eg}
  %\begin{aligned}
    &\Lambda_j:=\sum_{\ell=1}^j\, \lambda_\ell\,,\\
  \label{eq:d.eg2} 
	&\overline{\widetilde z}_j:= \frac{1}{\;\Lambda_j}\;
   \sum_{\ell=1}^j\,\lambda_\ell\, \widetilde {z}_\ell, \quad 
   \overline{v}_j:= \frac{1}{\;\Lambda_j}\;\sum_{\ell=1}^j\, \lambda_\ell\, v_\ell,\\ 
  \label{eq:d.eg3} 
	&\overline{\varepsilon}_j:=
    \frac{1}{\;\Lambda_j}\;\sum_{\ell=1}^j\,\lambda_\ell \left[\varepsilon_\ell
    +\inner{\widetilde {z}_\ell-
     \overline{\widetilde z}_j}{v_\ell-\overline{v}_j}\right]=
		\frac{1}{\;\Lambda_j}\;\sum_{\ell=1}^j\,\lambda_\ell \left[\varepsilon_\ell
    +\inner{\widetilde {z}_\ell-
     \overline{\widetilde z}_j}{v_\ell}\right].
  %\end{aligned}
\end{align}

\begin{theorem}[{\cite[Theorem 4.4(a) and 4.7]{mon.sva-hpe.siam10}}]
  \label{lm:rhpe2}
	Let $\{\widetilde z_j\}$, $\{v_j\}$, etc, be generated by the \emph{HPE method} \emph{(Algorithm \ref{hpe})}
	and let $\{\overline{\widetilde z}_j\}$, $\{\overline{v}_j\}$, etc, be given in \eqref{eq:d.eg}--\eqref{eq:d.eg3}. 
  Let also $d_0$ denote the distance from $z_0$ to $T^{-1}(0)\neq\emptyset$
	and assume that $\lambda_j\geq \underline{\lambda}>0$ for all $j\geq 1$. Then, the following hold:
  \begin{enumerate}
  \item[\emph{(a)}] For any $j\geq 1$,
	  there exists $i\in\{1,\dots,j\}$ 
    such that  
		\begin{align*}
      v_i\in T^{\varepsilon_i}(\widetilde z_i),\quad  \norm{v_i}\leq \dfrac{d_0}{\underline{\lambda}\sqrt{j}}
			\sqrt{\dfrac{1+\sigma}{1-\sigma}},\quad  \varepsilon_i\leq \dfrac{\sigma^2 d_0^2}{2(1-\sigma^2)
			\underline{\lambda}\;j}\,.
    \end{align*}
	 \item[\emph{(b)}]
	  For any $j\geq 1$,
		 \begin{align*}
     \overline{v}_j\in T^{\overline{\varepsilon}_j}(\overline{\widetilde z}_j),\quad 
     \norm{\overline{v}_j}\leq \dfrac{2d_0}{\underline{\lambda}\;j},\quad
     \overline{\varepsilon}_j\leq \dfrac{2(1+\sigma/\sqrt{1-\sigma^2})d_0^2}{\underline{\lambda}\;j}\,. 
    \end{align*}
	\end{enumerate}
\end{theorem}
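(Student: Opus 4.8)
The plan is to extract everything from a single \emph{fundamental inequality} linking consecutive iterates. Fix $z^*\in T^{-1}(0)$. First I would expand $\norm{z_j-z^*}^2=\norm{z_{j-1}-z^*-\lambda_j v_j}^2$, write $z_{j-1}-z^*=(z_{j-1}-\widetilde z_j)+(\widetilde z_j-z^*)$, and complete the square on the terms involving $z_{j-1}-\widetilde z_j$ so that the residual $\lambda_j v_j+\widetilde z_j-z_{j-1}$ from \eqref{eq:hpe} appears; then use $v_j\in T^{\varepsilon_j}(\widetilde z_j)$ and $0\in T(z^*)$ (hence $\inner{\widetilde z_j-z^*}{v_j}+\varepsilon_j\ge 0$ by \eqref{eq:def.teps}) together with the error bound $\norm{\lambda_j v_j+\widetilde z_j-z_{j-1}}^2\le\sigma^2\norm{\widetilde z_j-z_{j-1}}^2-2\lambda_j\varepsilon_j$ of \eqref{eq:hpe} to reach
\[
 2\lambda_j\big(\inner{\widetilde z_j-z^*}{v_j}+\varepsilon_j\big)+(1-\sigma^2)\norm{\widetilde z_j-z_{j-1}}^2\le\norm{z_{j-1}-z^*}^2-\norm{z_j-z^*}^2 .
\]
Since the first summand is $\ge 0$, the sequence $\{\norm{z_j-z^*}\}$ is nonincreasing, so with $z^*=P_{T^{-1}(0)}(z_0)$ we get $\norm{z_j-z^*}\le d_0$ for all $j\ge 1$; summing over $\ell=1,\dots,j$ telescopes the right-hand side and yields both $\sum_{\ell=1}^j\norm{\widetilde z_\ell-z_{\ell-1}}^2\le d_0^2/(1-\sigma^2)$ and $\sum_{\ell=1}^j\lambda_\ell\big(\inner{\widetilde z_\ell-z^*}{v_\ell}+\varepsilon_\ell\big)\le d_0^2/2$.

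For part (a) I would choose $i\in\{1,\dots,j\}$ attaining $\min_{1\le\ell\le j}\norm{\widetilde z_\ell-z_{\ell-1}}$; by averaging, $\norm{\widetilde z_i-z_{i-1}}^2\le d_0^2/\big((1-\sigma^2)j\big)$. Dropping $2\lambda_i\varepsilon_i\ge 0$ in \eqref{eq:hpe} and using the triangle inequality, $\lambda_i\norm{v_i}\le\norm{\lambda_i v_i+\widetilde z_i-z_{i-1}}+\norm{\widetilde z_i-z_{i-1}}\le(1+\sigma)\norm{\widetilde z_i-z_{i-1}}$; combined with $\lambda_i\ge\underline{\lambda}$ and the identity $(1+\sigma)/\sqrt{1-\sigma^2}=\sqrt{(1+\sigma)/(1-\sigma)}$ this is exactly the claimed bound on $\norm{v_i}$. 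Likewise $2\lambda_i\varepsilon_i\le\sigma^2\norm{\widetilde z_i-z_{i-1}}^2$ and $\lambda_i\ge\underline{\lambda}$ give the claimed bound on $\varepsilon_i$; and $v_i\in T^{\varepsilon_i}(\widetilde z_i)$ holds by construction.

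For part (b) I would apply the transportation formula (Theorem \ref{th:tf}) to the triples $(\widetilde z_\ell,v_\ell,\varepsilon_\ell)$ with weights $\alpha_\ell=\lambda_\ell/\Lambda_j$: this gives at once $\overline v_j\in T^{\overline\varepsilon_j}(\overline{\widetilde z}_j)$ and $\overline\varepsilon_j\ge 0$, with $\overline{\widetilde z}_j,\overline v_j,\overline\varepsilon_j$ precisely the ergodic sequences of \eqref{eq:d.eg2}--\eqref{eq:d.eg3}. The bound on $\norm{\overline v_j}$ then follows from the telescoping identity $\Lambda_j\overline v_j=\sum_{\ell=1}^j\lambda_\ell v_\ell=\sum_{\ell=1}^j(z_{\ell-1}-z_\ell)=z_0-z_j$, so $\norm{\overline v_j}\le(\norm{z_0-z^*}+\norm{z_j-z^*})/\Lambda_j\le 2d_0/\Lambda_j\le 2d_0/(\underline{\lambda}j)$. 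For $\overline\varepsilon_j$ I would start from the second form in \eqref{eq:d.eg3}, split $\widetilde z_\ell-\overline{\widetilde z}_j=(\widetilde z_\ell-z^*)-(\overline{\widetilde z}_j-z^*)$ to get $\overline\varepsilon_j=\frac1{\Lambda_j}\sum_{\ell=1}^j\lambda_\ell\big(\varepsilon_\ell+\inner{\widetilde z_\ell-z^*}{v_\ell}\big)-\inner{\overline{\widetilde z}_j-z^*}{\overline v_j}$, bound the sum by $d_0^2/(2\Lambda_j)$ via the second telescoped estimate above, and control the cross term by Cauchy--Schwarz together with $\norm{\overline{\widetilde z}_j-z^*}\le d_0\big(1+1/\sqrt{1-\sigma^2}\big)$, which comes from $\norm{\widetilde z_\ell-z^*}\le\norm{\widetilde z_\ell-z_{\ell-1}}+\norm{z_{\ell-1}-z^*}$ and the Fej\'er estimate $\norm{\widetilde z_\ell-z_{\ell-1}}\le d_0/\sqrt{1-\sigma^2}$; finally use $\Lambda_j\ge\underline{\lambda}j$.

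The main obstacle I anticipate is reproducing the \emph{sharp} constant $2\big(1+\sigma/\sqrt{1-\sigma^2}\big)$ in the estimate for $\overline\varepsilon_j$: the crude split just described produces a strictly larger constant, so one must avoid discarding the negative terms $-\norm{z_j-z^*}^2$ and $-(1-\sigma^2)\norm{\widetilde z_\ell-z_{\ell-1}}^2$ from the fundamental inequality and instead use them to absorb part of the cross term $\inner{z^*-\overline{\widetilde z}_j}{z_0-z_j}$. This careful bookkeeping is the only genuinely delicate point; the remainder is routine manipulation of the defining relations \eqref{eq:hpe}--\eqref{eq:hpe2} and of Proposition \ref{pr:teps}.
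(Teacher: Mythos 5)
Your reconstruction cannot be compared with a proof in the paper itself, because the paper gives none: Theorem \ref{lm:rhpe2} is quoted (with $\Lambda_j\ge \underline{\lambda}j$) from \cite{mon.sva-hpe.siam10}, and your argument follows essentially the same route as that source. Your fundamental inequality is correct, and from it your part (a) is complete with exactly the stated constants; in part (b), the inclusion via the transportation formula and the bound $\norm{\overline{v}_j}\le 2d_0/(\underline{\lambda}j)$ (from $\Lambda_j\overline{v}_j=z_0-z_j$ and Fej\'er monotonicity) are also correct.

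The genuine gap is the one you flagged yourself: as written, your estimate for $\overline{\varepsilon}_j$ only yields the constant $5/2+2/\sqrt{1-\sigma^2}$, strictly larger than the stated $2+2\sigma/\sqrt{1-\sigma^2}$ for every $\sigma\in[0,1)$, and the remedy you sketch (keeping the negative terms) is not sufficient by itself. Two concrete changes close it. First, estimate $\norm{\widetilde z_\ell-z^*}$ through $z_\ell$ rather than $z_{\ell-1}$: since $\widetilde z_\ell-z_\ell=\lambda_\ell v_\ell+\widetilde z_\ell-z_{\ell-1}$ by \eqref{eq:hpe2}, condition \eqref{eq:hpe} gives $\norm{\widetilde z_\ell-z_\ell}\le\sigma\norm{\widetilde z_\ell-z_{\ell-1}}\le \sigma d_0/\sqrt{1-\sigma^2}$, hence $\norm{\overline{\widetilde z}_j-z^*}\le R:=\bigl(1+\sigma/\sqrt{1-\sigma^2}\bigr)d_0$; your route through $z_{\ell-1}$ can never produce the factor $\sigma$. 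Second, do not discard $-\norm{z_j-z^*}^2$ in the telescoped inequality, and bound $\norm{z_0-z_j}\le d_0+t$ with $t:=\norm{z_j-z^*}\le d_0$ instead of by $2d_0$. Then your identity $\Lambda_j\overline{\varepsilon}_j=\sum_{\ell=1}^j\lambda_\ell\bigl(\varepsilon_\ell+\inner{\widetilde z_\ell-z^*}{v_\ell}\bigr)+\inner{z^*-\overline{\widetilde z}_j}{z_0-z_j}$ combined with the telescoped bound gives
\begin{align*}
 2\Lambda_j\overline{\varepsilon}_j\;\le\; d_0^2-t^2+2R\,(d_0+t)\;=\;(d_0+t)\,(d_0-t+2R)\;\le\;4Rd_0,
\end{align*}
the last step because the product is nondecreasing in $t$ on $[0,d_0]$ (its derivative is $2(R-t)\ge 0$ since $R\ge d_0$). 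Dividing by $2\Lambda_j\ge 2\underline{\lambda}j$ yields exactly $\overline{\varepsilon}_j\le 2\bigl(1+\sigma/\sqrt{1-\sigma^2}\bigr)d_0^2/(\underline{\lambda}j)$. With these two adjustments your proposal becomes a complete proof of the theorem as stated.
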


\noindent
{\bf Remark.}
%
%\begin{remark}
\begin{itemize}
 \item[] The (\emph{pointwise} and \emph{ergodic}) bounds given in (a)
and (b) of Theorem \ref{lm:rhpe2} guarantee, respectively, that  
%The bounds in (a) and (b) of Theorem \ref{lm:rhpe2} are
%called \emph{pointwise} and \emph{ergodic} bounds, respectively.  Items (a) and (b) can be used, respectively,
%to prove that 
for given tolerances $\rho,\epsilon>0$, 
the termination criterion \eqref{eq:tc} is satisfied 
in at most
 \begin{align*}
 \mathcal{O}\left(\max\left\{\dfrac{d_0^2}{\underline{\lambda}^2\rho^2},
 \dfrac{d_0^2}{\underline{\lambda}\epsilon}\right\}\right)
 \;\;\mbox{and}\;\;
\mathcal{O}\left(\max\left\{\dfrac{d_0}{\underline{\lambda}\rho},\dfrac{d_0^2}{\underline{\lambda}\epsilon}
\right\}\right)
 \end{align*}
iterations, respectively. We refer the reader to \cite{mon.sva-hpe.siam10} for a complete study of the
iteration complexity of the HPE method and its special instances.
\end{itemize}
%\end{remark}

\noindent
The proposition below will be useful in the next sections.

\begin{proposition}[{\cite[Lemma 4.2 and Eq. (34)]{mon.sva-hpe.siam10}}]
\label{pr:imp.fact}
Let $\{z_j\}$ be generated by the 
\emph{HPE method (Algorithm \ref{hpe})}. Then, for any $z^*\in T^{-1}(0)$, the sequence $\{\norm{z^*-z_j}\}$ is
nonincreasing.
% and
%\begin{equation}
%\label{series}
%\norm{z^*-z_0}^2\geq
%\sum_{k=1}^\infty
%(1-\sigma_k^2) \, \norm{\tilde z_k-z_{k-1}}^2
%\geq
%(1-\sigma^2)
%\sum_{j=1}^\infty \norm{\tilde z_j-z_{j-1}}^2\,.
%\end{equation}
As a consequence, for every $j \ge 1$, we have
\begin{align}
\label{eq:ineq.zk0}
\|z_j-z_0\| \le 2 d_0,
\end{align}
where $d_0$ denotes the distance of $z_0$ to $T^{-1}(0)$.
\end{proposition}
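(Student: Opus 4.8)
The plan is to establish the two claims in order, deriving the second as a consequence of the first. For the monotonicity of $\{\norm{z^*-z_j}\}$ when $z^*\in T^{-1}(0)$, I would start from the HPE update $z_j = z_{j-1}-\lambda_j v_j$ and expand
\[
\norm{z^*-z_j}^2 = \norm{z^*-z_{j-1}}^2 + 2\lambda_j\inner{z^*-z_{j-1}}{v_j} + \lambda_j^2\norm{v_j}^2.
\]
The goal is to show the last two terms sum to something $\le 0$. Rewriting $z^*-z_{j-1} = (z^*-\widetilde z_j) + (\widetilde z_j - z_{j-1})$ and using $\widetilde z_j - z_{j-1} = -\lambda_j v_j - (\lambda_j v_j + \widetilde z_j - z_{j-1})$, one isolates the term $2\lambda_j\inner{z^*-\widetilde z_j}{v_j}$. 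Since $v_j\in T^{\varepsilon_j}(\widetilde z_j)$ and $0\in T(z^*)$, the definition \eqref{eq:def.teps} of the $\varepsilon$-enlargement gives $\inner{\widetilde z_j - z^*}{v_j - 0}\ge -\varepsilon_j$, i.e.\ $2\lambda_j\inner{z^*-\widetilde z_j}{v_j}\le 2\lambda_j\varepsilon_j$. Collecting the remaining terms, the right-hand side becomes
\[
\norm{z^*-z_{j-1}}^2 - \norm{\lambda_j v_j + \widetilde z_j - z_{j-1}}^2 + 2\lambda_j\varepsilon_j + \text{(cross terms that telescope to a square)},
\]
and the HPE error bound \eqref{eq:hpe}, namely $\norm{\lambda_j v_j + \widetilde z_j - z_{j-1}}^2 + 2\lambda_j\varepsilon_j \le \sigma^2\norm{\widetilde z_j - z_{j-1}}^2$, controls precisely this combination, yielding $\norm{z^*-z_j}^2 \le \norm{z^*-z_{j-1}}^2 - (1-\sigma^2)\norm{\widetilde z_j - z_{j-1}}^2 \le \norm{z^*-z_{j-1}}^2$. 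This is the standard Fej\'er-type estimate behind the HPE method; since it is quoted from \cite[Lemma 4.2]{mon.sva-hpe.siam10}, I would in fact just cite it, but the computation above is the substance.

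For the consequence \eqref{eq:ineq.zk0}, the plan is a triangle-inequality argument combined with the monotonicity just established. Fix $z^*\in T^{-1}(0)$ with $\norm{z^*-z_0}=d_0$ (such a point exists because $T^{-1}(0)$ is nonempty and closed, being the zero set of a maximal monotone operator). Then for any $j\ge 1$,
\[
\norm{z_j - z_0} \le \norm{z_j - z^*} + \norm{z^* - z_0} \le \norm{z_0 - z^*} + \norm{z^* - z_0} = 2d_0,
\]
where the middle inequality uses that $\norm{z_j - z^*}\le \norm{z_0 - z^*}$ by the nonincreasing property (applied from index $0$ to index $j$). If $z^*$ merely satisfies $\norm{z^*-z_0}$ arbitrarily close to $d_0$ rather than equal, one takes an infimum over $z^*\in T^{-1}(0)$ at the end, which gives the same bound.

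The only mild subtlety — the ``hard part,'' though it is not hard — is being careful that $d_0$ is realized or at least approached by an actual element of $T^{-1}(0)$ so that the monotonicity estimate can be applied to that specific $z^*$; this is handled by closedness of $T^{-1}(0)$ (or, in the non-closed/degenerate edge cases, by an infimum argument). Everything else is algebraic manipulation of the HPE recursion and the $\varepsilon$-enlargement inequality, both of which are available from the material already presented (Proposition \ref{pr:teps}, the definition \eqref{eq:def.teps}, and the HPE conditions \eqref{eq:hpe}--\eqref{eq:hpe2}), so in the write-up I would keep the first part brief with a pointer to \cite{mon.sva-hpe.siam10} and spell out only the short triangle-inequality deduction of \eqref{eq:ineq.zk0}.
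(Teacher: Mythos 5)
Your argument is correct and is essentially the paper's: the paper simply cites \cite[Lemma 4.2 and Eq.\ (34)]{mon.sva-hpe.siam10}, and your Fej\'er-type estimate $\norm{z^*-z_j}^2\le\norm{z^*-z_{j-1}}^2-(1-\sigma^2)\norm{\widetilde z_j-z_{j-1}}^2$ followed by the triangle inequality is exactly the standard argument behind that citation. One trivial remark: in infinite dimensions nearest-point existence in $T^{-1}(0)$ uses its convexity (together with closedness), not closedness alone, but your fallback infimum argument makes this moot.
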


%%%%%%%%%%%%%%%%%%%%%%%%%%%%%%%%%%%%55

\subsubsection{A HPE variant for strongly monotone sums}
 \label{subsub:sms}

We now consider the MIP
\begin{align}
 \label{eq:bmu}
 0\in S(z)+B(z)=:T(z)
\end{align}
where the following is assumed to hold:
\begin{itemize}
\item[(C1)] $S$ and $B$ are maximal monotone operators on $\HH$;
\item[(C2)] $S$ is (additionally) $\mu$--strongly monotone for some $\mu>0$, i.e., there exists
$\mu>0$ such that
\begin{align}
 \label{eq:def.strmon}
 \inner{z-z'}{v-v'}\geq \mu\norm{z-z'}^2\qquad \forall v\in S(z),v'\in S(z');
\end{align}
\item[(C3)] the solution set $(S+B)^{-1}(0)$ of \eqref{eq:bmu} is nonempty.
\end{itemize}

 The main motivation to consider the above setting is Subsection \ref{subsec:solsub}, in which
the monotone inclusion \eqref{eq:cff}
is clearly a special instance of \eqref{eq:bmu} with $S(\cdot):=(1/\gamma)(\cdot-\bpt{z})$, which 
is obviously $(1/\gamma)$-strongly maximal monotone on $\HH$.
 
\mgap
The algorithm below was proposed and studied (with a different notation) in \cite[Algorithm 1]{alv.mon.sva-reg.siam16}.

\mgap
\mgap

\noindent
\fbox{
\addtolength{\linewidth}{-2\fboxsep}%
\addtolength{\linewidth}{-2\fboxrule}%
\begin{minipage}{\linewidth}%[h]{6.6 in}
\begin{algorithm}
\label{shpe}
{\bf A specialized HPE method for solving strongly monotone inclusions}
\end{algorithm}
\begin{itemize}
\item[(0)] Let $z_0\in \HH$ and $\sigma\in [0,1)$ be given and set $j\leftarrow 1$.
\item [(1)] Compute $(\widetilde z_j,v_j,\varepsilon_j)\in \HH\times \HH\times \R_+$
and $\lambda_j>0$ 
such that
\begin{align}
\label{eq:shpe}
 \begin{aligned}
  v_j\in S(\widetilde z_j)+B^{\varepsilon_j}(\widetilde z_j),\quad  
	\norm{\lambda_j v_j+ \widetilde z_j-z_{j-1}}^2+
 2\lambda_j \varepsilon_j \leq \sigma^2\norm{\widetilde z_j-z_{j-1}}^2.
\end{aligned}
\end{align} 
\item[(2)] Define 
 \begin{align}
  \label{eq:shpe2}
    z_j=z_{j-1}-\lambda_j v_j, 
   \end{align}
	set $j\leftarrow j+1$ and go to step 1.
   \end{itemize}
\noindent
 %{\bf end}
\end{minipage}
} %from fbox
\mgap
\mgap

%\noindent
%{\bf Remarks.} 
%
%\begin{enumerate}
% \item It follows from Proposition (???) that $S+B^{\varepsilon}\subset (S+B)^{\varepsilon}$ and hence, it follows from 
%the fact that $\gamma b_j+2\widetilde z_j-(\bpt{z}+z_{j-1})=
%\gamma[b_j+(1/\gamma)(\widetilde z_j-\bpt{z})]+\widetilde z_j-z_{j-1}$ and
%\eqref{eq:hpe},\eqref{eq:hpe2},\eqref{eq:shpe} and \eqref{eq:shpe2}
%that Algorithm \ref{shpe} is a special instance of Algorithm \ref{hpe} for solving \eqref{eq:bmu}.
%\end{enumerate}

Next proposition will be useful in Subsection \ref{subsec:solsub}.

\begin{proposition}[{\cite[Proposition 2.2]{alv.mon.sva-reg.siam16}}]
\label{pr:3m}
%
%Let $S,B:\HH\tos \HH$ be maximal monotone operators such that $(S+B)^{-1}(0)\neq \emptyset$ and $S$ be $\mu$--strongly monotone for some $\mu>0$.
Let $\{\widetilde z_j\}$, $\{v_j\}$ and $\{\varepsilon_j\}$ be generated by \emph{Algorithm \ref{shpe}}, let 
%$d_0:=\norm{z_0-z^*}$ where 
$z^*:=(S+B)^{-1}(0)$ and $d_0:=\norm{z_0-z^*}$. 
Assume that  $\lambda_j\geq \underline{\lambda}>0$ 
for all $j\geq 1$ and define
\begin{align}
\label{eq:def.alpha}
 \alpha :=  \left( \frac{1}{2\underline{\lambda}\mu} +
 \frac{1}{1-\sigma^2} \right)^{-1} \in (0,1).
\end{align}
Then, for all $j\geq 1$, 
\begin{align}
 \label{eq:3m}
 \begin{aligned}
&v_j\in S(\widetilde z_j)+B^{\varepsilon_j}(\widetilde z_j),\\
&\|v_j\| \le \sqrt{\dfrac{1+\sigma}{1-\sigma}}
 \left(\frac{(1-\alpha)^{(j-1)/2}}{\underline{\lambda}} \right) d_0,\\
&\varepsilon_j \le \frac{\sigma^2}{2(1-\sigma^2)}
 \left(\frac{(1-\alpha)^{j-1}}{\underline{\lambda}} \right) d_0^{\,2}.
\end{aligned}
\end{align}
%
%\[
% \|x^*-x_k\| \le (1-\theta)^{k/2} \|x^*-x_0\|
%\quad \forall x^* \in (A+B)^{-1}(0).
%\]
%
\end{proposition}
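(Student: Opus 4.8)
The plan is to establish a one-step linear contraction of $\|z_j-z^*\|^2$, iterate it, and then read off the bounds on $\|v_j\|$ and $\varepsilon_j$ from the relative error inequality in \eqref{eq:shpe}. First note that, since $S$ is $\mu$-strongly monotone and $B$ is maximal monotone, $T=S+B$ is strongly monotone, so $T^{-1}(0)$ has at most one element; together with (C3) this makes $z^*=(S+B)^{-1}(0)$ well defined. The whole argument runs parallel to the complexity analysis behind Theorem \ref{lm:rhpe2} and Proposition \ref{pr:imp.fact}, with the strong monotonicity of $S$ supplying an extra contractive term.

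First I would record the basic HPE identity: writing $r_j:=\lambda_j v_j+\widetilde z_j-z_{j-1}$ and using $z_j=z_{j-1}-\lambda_j v_j$, expanding squares and regrouping yields
\[
\|z_{j-1}-z^*\|^2-\|z_j-z^*\|^2=\|z_{j-1}-\widetilde z_j\|^2-\|r_j\|^2+2\lambda_j\inner{v_j}{\widetilde z_j-z^*}.
\]
To estimate the inner product I would use the decomposition $v_j=s_j+b_j$ with $s_j\in S(\widetilde z_j)$ and $b_j\in B^{\varepsilon_j}(\widetilde z_j)$ allowed by \eqref{eq:shpe}, and choose $s^*\in S(z^*)$, $b^*\in B(z^*)$ with $s^*+b^*=0$. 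Strong monotonicity of $S$ gives $\inner{\widetilde z_j-z^*}{s_j-s^*}\ge\mu\|\widetilde z_j-z^*\|^2$, and the definition \eqref{eq:def.teps} of $B^{\varepsilon_j}$ gives $\inner{\widetilde z_j-z^*}{b_j-b^*}\ge-\varepsilon_j$; adding, $\inner{v_j}{\widetilde z_j-z^*}\ge\mu\|\widetilde z_j-z^*\|^2-\varepsilon_j$. Substituting this and using $\|r_j\|^2+2\lambda_j\varepsilon_j\le\sigma^2\|\widetilde z_j-z_{j-1}\|^2$ together with $\lambda_j\ge\underline{\lambda}$ leads to
\[
\|z_{j-1}-z^*\|^2-\|z_j-z^*\|^2\ \ge\ (1-\sigma^2)\,\|\widetilde z_j-z_{j-1}\|^2+2\underline{\lambda}\mu\,\|\widetilde z_j-z^*\|^2 .
\]

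The key algebraic step is to bound the right-hand side below by $\alpha\|z_{j-1}-z^*\|^2$. Setting $P:=\|\widetilde z_j-z_{j-1}\|$, $Q:=\|\widetilde z_j-z^*\|$, $u:=1-\sigma^2$ and $w:=2\underline{\lambda}\mu$, one has $\|z_{j-1}-z^*\|\le P+Q$ by the triangle inequality, and the identity $uP^2+wQ^2-\tfrac{uw}{u+w}(P+Q)^2=\tfrac{1}{u+w}(uP-wQ)^2\ge0$ gives exactly $uP^2+wQ^2\ge\alpha(P+Q)^2$ with $\alpha=\bigl(\tfrac{1}{1-\sigma^2}+\tfrac{1}{2\underline{\lambda}\mu}\bigr)^{-1}$ as in \eqref{eq:def.alpha} (that $\alpha\in(0,1)$ is immediate since $\tfrac{1}{1-\sigma^2}\ge1$ and $\tfrac{1}{2\underline{\lambda}\mu}>0$). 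Hence $\|z_j-z^*\|^2\le(1-\alpha)\|z_{j-1}-z^*\|^2$, so by induction $\|z_j-z^*\|^2\le(1-\alpha)^jd_0^{\,2}$, and feeding this back into the previous display yields $\|\widetilde z_j-z_{j-1}\|^2\le(1-\alpha)^{j-1}d_0^{\,2}/(1-\sigma^2)$.

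Finally, from \eqref{eq:shpe} one has $\|r_j\|\le\sigma\|\widetilde z_j-z_{j-1}\|$, hence $\lambda_j\|v_j\|\le\|r_j\|+\|\widetilde z_j-z_{j-1}\|\le(1+\sigma)\|\widetilde z_j-z_{j-1}\|$, and also $2\lambda_j\varepsilon_j\le\sigma^2\|\widetilde z_j-z_{j-1}\|^2$. Combining these with $\lambda_j\ge\underline{\lambda}$, the bound on $\|\widetilde z_j-z_{j-1}\|^2$, and the identity $(1+\sigma)^2/(1-\sigma^2)=(1+\sigma)/(1-\sigma)$ gives precisely the estimates in \eqref{eq:3m}. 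I expect the only genuinely delicate point to be choosing the weights in the last algebraic inequality so that the resulting contraction factor is exactly the constant $\alpha$ of \eqref{eq:def.alpha}; everything else is the standard HPE bookkeeping together with one use of the definition of the $\varepsilon$-enlargement and of the strong monotonicity of $S$.
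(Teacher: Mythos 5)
Your proof is correct, and each step checks out: the identity for $\|z_{j-1}-z^*\|^2-\|z_j-z^*\|^2$, the lower bound $\inner{v_j}{\widetilde z_j-z^*}\ge\mu\|\widetilde z_j-z^*\|^2-\varepsilon_j$ obtained from strong monotonicity of $S$ and the definition \eqref{eq:def.teps}, the harmonic-mean inequality $uP^2+wQ^2\ge\frac{uw}{u+w}(P+Q)^2$ yielding the contraction factor $1-\alpha$, and the standard HPE estimates $\lambda_j\|v_j\|\le(1+\sigma)\|\widetilde z_j-z_{j-1}\|$ and $2\lambda_j\varepsilon_j\le\sigma^2\|\widetilde z_j-z_{j-1}\|^2$. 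The paper itself gives no proof of Proposition \ref{pr:3m}, importing it from \cite[Proposition 2.2]{alv.mon.sva-reg.siam16}, and your argument is essentially the same one used in that reference, so it in effect supplies the omitted proof.
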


Next section presents one of the main contributions of this paper, namely an inexact Douglas-Rachford type method
for solving \eqref{eq:mip.i} and its iteration complexity analysis.

%%%%%%%%%%%%%%%%%%%%%%%%%%%%%%%%%%%%%

\section{An inexact Douglas-Rachford splitting (DRS) method and its iteration complexity}
\label{sec:dr}

Consider problem \eqref{eq:mip.i}, i.e., the problem of finding $z\in \HH$ such that
\begin{align}
 \label{eq:mip}
 0\in A(z)+B(z)
\end{align}
where the following hold:
\begin{itemize}
\item[(D1)]  $A$ and $B$ are maximal monotone operators on $\HH$;
\item[(D2)]  the solution set $(A+B)^{-1}(0)$ of \eqref{eq:mip} is nonempty.
\end{itemize} 

In this section, we propose and analyze the iteration complexity of an inexact version 
of the \emph{Douglas-Rachford splitting} (DRS) \emph{method}~\cite{lio.mer-spl.sjna79} for finding approximate solutions of \eqref{eq:mip} according
to the following termination criterion: given tolerances $\rho, \epsilon>0$, find $a,b,x,y\in \HH$ and
$\varepsilon_a,\varepsilon_b\geq 0$ such that
\begin{align}
 \label{eq:def.apsol}
 a\in A^{\varepsilon_a}(y),\;b\in B^{\varepsilon_b}(x),\quad \gamma \norm{a+b}=\norm{x-y}\leq \rho,\; 
\varepsilon_a+\varepsilon_b\leq \epsilon,
\end{align}
where $\gamma>0$ is a scaling parameter. Note that if $\rho=\epsilon=0$ in \eqref{eq:def.apsol}, then
 %the above criteion gives $a\in A(x)$, $-a\in B(x)$, which means that
$z^*:=x=y$ is a solution of \eqref{eq:mip}.

As we mentioned earlier, the algorithm below is motivated by \eqref{eq:dr.spp}--\eqref{eq:dr.spp03} as well as by the recent work of Eckstein and Yao~\cite{eck.yao-rel.mp17}.

\mgap
\mgap

\noindent
\fbox{
\addtolength{\linewidth}{-2\fboxsep}%
\addtolength{\linewidth}{-2\fboxrule}%
\begin{minipage}{\linewidth}%[h]{6.6 in}
\begin{algorithm}
\label{inexact.dr}
{\bf An inexact Douglas-Rachford splitting method for \bf{(\ref{eq:mip})}}
\end{algorithm}
\begin{itemize}
%
%$\sigma\in (0,1)$, $\theta\in (0,1)$
\item[(0)] Let $z_0\in \HH$, $\gamma>0$, $\tau_0>0$ and $0<\sigma,\theta<1$  be given and set $k\leftarrow 1$.
\item [(1)] Compute $(x_k,b_k,\varepsilon_{b,\,k})\in \HH\times \HH\times \R_+$ 
%and $\varepsilon_{b,\,k}\geq 0$ 
such that
\begin{align}
 \label{eq:err.b}
 b_k\in B^{\varepsilon_{b,\,k}}(x_k),\quad \norm{\gamma b_k+x_k-z_{k-1}}^2+2\gamma\varepsilon_{b,\,k}\leq \tau_{k-1}.
\end{align}
\item[(2)]  Compute $(y_k,a_k)\in \HH\times \HH$ such that
 \begin{align}
 \label{eq:err.a}
 \hspace{-3.0cm} a_k\in A(y_k),\quad \gamma a_k+y_k=x_k-\gamma b_k.
\end{align}
\item[(3)]  (3.a) If 
 \begin{align}
 \label{eq:cond.err}
 \hspace{-1.3cm}\norm{\gamma b_k+x_k-z_{k-1}}^2+2\gamma \varepsilon_{b,k}\leq \sigma^2\norm{\gamma b_k+y_k-z_{k-1}}^2,
\end{align}
\hspace{0.9cm}then
\begin{align}
 \label{eq:ext.step}
% \hspace{-2.2cm} 
z_k=z_{k-1}-\gamma(a_k+b_k),\quad \tau_k=\tau_{k-1}\qquad \mbox{[extragradient step]}.
\end{align}
(3.b) Else
\begin{align}
 \label{eq:null.step}
 \hspace{-3.6cm} z_k=z_{k-1},\quad \tau_k=\theta\,\tau_{k-1}\qquad \mbox{[null step]}.
\end{align}
\item[(4)]
	Set $k\leftarrow k+1$ and go to step 1.
   \end{itemize}
\noindent
% {\bf end}
\end{minipage}
} %from fbox
\mgap
\mgap

%\begin{remark}
%xxxx
%\end{remark}

\noindent
{\bf Remarks.}
\begin{enumerate}

\item  We emphasize that although it has been motivated by \cite[Algorithm 3]{eck.yao-rel.mp17}, Algorithm \ref{inexact.dr} is based on a slightly different mechanism of iteration. Moreover, it also allows for the computation of $(x_k,b_k)$
in \eqref{eq:err.b} in the
 $\varepsilon_{b,k}$-- enlargement of $B$ (it has the property that $B^{\varepsilon_{b,k}}(x)\supset B(x)$ for all $x\in \HH$); this will be crucial for the design and iteration complexity analysis
of the four-operator splitting method of Section \ref{sec:drt}. We also mention that, contrary to this work, no iteration complexity
analysis is performed in \cite{eck.yao-rel.mp17}.

%\item Recall that the operator $B^{\varepsilon_{b,k}}$ which appeared in \eqref{eq:err.b} is the $\varepsilon_{b,k}$-- enlargement
%of $B$ .

\item Computation of $(x_k,b_k,\varepsilon_{b,\,k})$ satisfying \eqref{eq:err.b} will depend on the particular instance of
the problem \eqref{eq:mip} under consideration. In Section \ref{sec:drt},
we will use Algorithm \ref{inexact.dr} for solving a four-operator splitting monotone inclusion. 
In this setting, at every iteration $k\geq 1$ of Algorithm \ref{inexact.dr}, called an outer iteration, a Tseng's forward-backward (F-B) splitting type method
will be used, as an inner iteration, to solve the (prox) subproblem \eqref{eq:err.b}. 
 %the (strongly) monotone inclusion
 %
%\begin{align}
% 0\in B(z)+\dfrac{1}{\gamma}(z-z_{k-1})
%\end{align} 
%
%and then applying an (inner) algorithm with good performance in solving the above problem class, with output
%$(x_k,b_k)$ and $\varepsilon_{b,\,k}\geq 0$, until \eqref{eq:err.b}
%gets true. See Section \ref{sec:num} for more details.
%
 \item Whenever the resolvent $J_{\gamma B}=(\gamma B+I)^{-1}$ is computable, then it follows that
 $(x_k,b_k):=(J_{\gamma B}(z_{k-1}),(z_{k-1}-x_k)/\gamma)$ and $\varepsilon_{b,\,k}:=0$ clearly solve \eqref{eq:err.b}.
In this case, the left hand side of the inequality in \eqref{eq:err.b} is zero and, as a consequence, the
inequality \eqref{eq:cond.err} is always satisfied. In particular, 
\eqref{eq:dr.spp}--\eqref{eq:dr.spp03} hold, i.e., in this case Algorithm \ref{inexact.dr} reduces to the (exact) 
DRS method. 

%\vspace{1cm}
%Concret examples
%of maximal monotone operators with computable resolvents can be found, e.g., in~\cite{}. See also Section \ref{sec:num}. 
%
\item In this paper, we assume that the resolvent $J_{\gamma A}=(\gamma A+I)^{-1}$ is computable, which
implies that $(y_k,a_k):=(J_{\gamma A}(x_k-\gamma b_k),(x_k-\gamma b_k-y_k)/\gamma)$ is the demanded pair in \eqref{eq:err.a}.
\item Algorithm \ref{inexact.dr} potentially performs extragradient steps and null steps, 
depending on the condition \eqref{eq:cond.err}. It will be shown in Proposition \ref{pr:eHPE} that iterations corresponding
to extragradient steps reduce to a special instance of the HPE method, in which case pointwise and ergodic iteration complexity results are available in the current literature (see Proposition \ref{pr:c.ihpe}). On the other hand, iterations corresponding
to the null steps will demand a separate analysis (see Proposition \ref{pr:comp.b}).

%\vspace{1cm}
%The iterations given by Step 3.a update $z_k$ as an extragradient step from $z_{k-1}$ while keeping
%$\tau_{k-1}$ unchanged (see \eqref{eq:ext.step}). It will be shown in Proposition \ref{pr:eHPE} that in this case Algorithm \ref{inexact.dr}
%reduces to a realization of the HPE method, in which case pointwise and ergodic iteration complexity results are available in the current %literature~\cite{}, see Proposition \ref{pr:c.ihpe}. On the other hand, iterations given by Step 3.b keep the current iterate $z_{k-1}$ %unchanged while reducing $\tau_{k-1}$ by a factor of%
%$\theta<1$ (see \eqref{eq:null.step}). It will be shown in Proposition \ref{pr:comp.b} that, in this case, the residuals in 
%\eqref{eq:def.apsol} decrease geometrically. Combination of Proposition \ref{pr:eHPE} and Proposition \ref{pr:comp.b} will provide
%pointwise and ergodic iteration complexity results for Algorithm \ref{inexact.dr} in Theorems \ref{th:idr.main}
%and \ref{th:idr.main.erg}.
%
%\item COMPARAR COM ECKSTEIN!!
\end{enumerate}

\noindent
As we mentioned in the latter remark, each iteration of Algorithm \ref{inexact.dr} is either an extragradient step
or a null step (see \eqref{eq:ext.step} and \eqref{eq:null.step}).  
This will be formally specified by considering the sets:
\begin{align} 
 \label{eq:def.ab}
 \begin{aligned}
&\mathcal{A}:=\mbox{indexes}\;k\geq 1\;\mbox{for which an extragradient step is executed at the iteration}\; k.\\
&\mathcal{B}:=\mbox{indexes}\;k\geq 1\;\mbox{for which a null step is executed at the iteration}\; k.
\end{aligned}
\end{align}
That said, we let
\begin{align}
 \label{eq:car.a}
 \mathcal{A}=\{k_j\}_{j\in J},\quad J:=\set{j\geq 1\;|\; j\leq \# \mathcal{A}}
%\;\;\mbox{where}\;\;J:=\set{j\geq 1\;|\; j\leq \# \mathcal{A}}
\end{align} 
where $k_0:=0$ and $k_0<k_j<k_{j+1}$ for all $j\in J$, and let
$\beta_0:=0$ and 
\begin{align}
  \label{eq:def.betak}
 \beta_k:=\mbox{the number of indexes for which a null step is executed until the iteration}\;k.
\end{align}
Note that direct use of the above definition and \eqref{eq:null.step} yield
\begin{align}
  \label{eq:tau.beta}
 \tau_k=\theta^{\beta_k}\tau_0\quad \forall k\geq 0.
\end{align}

%\newpage

%Define, for every $j\geq 1$,
%
%\begin{align}
%  \label{eq:def.ab}
%  \begin{aligned}
%    &\mathcal{A}_j:=\{1\leq k\leq j\;|\;\; 
%   \text{Step (3.a) is executed at iteration $k$}\},
%	  & \\
    %\alpha_j:=\# \mathcal{A}_j\,,
		%& A=\cup_jA_j\,,\\%
  %  &\mathcal{B}_j:=\{1\leq k\leq j\;|\;\; 
  % \text{Step (3.b) is executed at iteration $k$}\},\\
  %  &\beta_0:=0\;\; \mbox{and}\;\; \beta_j:=\# \mathcal{B}_j\,,
	%B=\cup_jB_j\,.
  %\end{aligned}
%\end{align}
%where $\# C$ stands for the number of elements of
%a set $C$. 

%Moreover, define
%
%\begin{align}
% \label{eq:uni.ab}
% &\mathcal{A}:=\bigcup_{j=1}^\infty \mathcal{A}_j,\quad \mathcal{B}:=\bigcup_{j=1}^\infty \mathcal{B}_j,\\
% \label{eq:s70}
% &J:=\set{j\geq 1\;|\; j\leq \# \mathcal{A}},\\
% \label{eq:s80}
% &k_0:=0,\quad k_j:=\text{$j$-th element of }\mathcal{A},
%\end{align}
%
%To further simplify the converge analysis, define
%
%\begin{align}
%\label{eq:s70}
%  J=\set{j\geq 1\;:\; j\leq \# \mathcal{A}},\quad
%  k_0=0,\quad k_j=\text{$j$-th element of }\mathcal{A},
%\end{align}
%

%
%\begin{align}
% \label{eq:car.a}
%  k_0<k_1<k_2\cdots,\qquad \mathcal{A}=\set{k_j\;|\; j\in J}.
%\end{align}
%

%Moreover,
%
%\begin{align}
% \label{eq:ord.betaj}
% \beta_j\leq \beta_{j+1}\quad \forall j\geq 0.
%\end{align}
%

In order to study the \emph{ergodic iteration complexity} of Algorithm \ref{inexact.dr} we also define 
the \emph{ergodic sequences} associated to the sequences $\{x_{k_j}\}_{j\in J}$, $\{y_{k_j}\}_{j\in J}$, 
$\{a_{k_j}\}_{j\in J}$, $\{b_{k_j}\}_{j\in J}$,  
and $\{\varepsilon_{b,\,{k_j}}\}_{j\in J}$, for all $j\in J$, as follows:
\begin{align}
\label{eq:def.erg131} 
&\overline{x}_{k_j}:=\dfrac{1}{j}\sum_{\ell=1}^j x_{k_\ell},\quad 
 \overline{y}_{k_j}:=\dfrac{1}{j}\sum_{\ell=1}^j y_{k_\ell}\,,\\ 
  \label{eq:def.erg130}
% \begin{aligned}
 &\overline{a}_{k_j}:=\dfrac{1}{j}\sum_{\ell=1}^j a_{k_\ell}\,,\quad  
  \overline{b}_{k_j}:=\dfrac{1}{j}\sum_{\ell=1}^j b_{k_\ell}\,,\\ 
\label{eq:def.erg132} 
&\overline{\varepsilon}_{a,\,k_j}:=\dfrac{1}{j}\sum_{\ell=1}^j\,
\inner{y_{k_\ell}-\overline{y}_{k_j}}{a_{k_\ell}-\overline{a}_{k_j}}
=\dfrac{1}{j}\sum_{\ell=1}^j\,
\inner{y_{k_\ell}-\overline{y}_{k_j}}{a_{k_\ell}},\,\\
\label{eq:def.erg133}
&\overline{\varepsilon}_{b,\,k_j}:=\dfrac{1}{j}\sum_{\ell=1}^j
\big[\varepsilon_{b,\,k_\ell}+\inner{x_{k_\ell}-\overline{x}_{k_j}}{b_{k_\ell}-\overline{b}_{k_j}}\big]
=\dfrac{1}{j}\sum_{\ell=1}^j
\big[\varepsilon_{b,\,k_\ell}+\inner{x_{k_\ell}-\overline{x}_{k_j}}{b_{k_\ell}}
\big].
%\end{aligned}
\end{align} 

Moreover, the results on iteration complexity of Algorithm \ref{inexact.dr} (pointwise and ergodic) obtained in 
this paper will depend on the following quantity:
\begin{align}
 \label{eq:def.d0}
 d_{0,\,\gamma}:=\mbox{dist}\left(z_0,\mbox{zer}(S_{\gamma, A,B})\right)=
 \min\,\{\norm{z_0-z}\;|\;z\in \mbox{zer}(S_{\gamma, A,B})\}
\end{align}
which measures the quality of the initial guess $z_0$ in Algorithm \ref{inexact.dr} 
with respect to $\mbox{zer}(S_{\gamma, A,B})$, where the operator $S_{\gamma, A,B}$ is 
such that $J_{\gamma B}(\mbox{zer}(S_{\gamma,\,A,\,B}))=(A+B)^{-1}(0)$ (see \eqref{eq:def.s}).

\mgap
In the next proposition, we show that the procedure resulting by selecting the
extragradient steps in Algorithm \ref{inexact.dr} can be embedded into HPE method.

First, we need the following lemma.
\begin{lemma}
 \label{lm:ixk}
Let $\{z_k\}$ be generated by \emph{Algorithm \ref{inexact.dr}} and let the set $J$ be defined in
\eqref{eq:car.a}. Then,
\begin{align}
\label{eq:ixk}
 z_{k_{j-1}}=z_{k_j-1}\qquad \forall j\in J.
\end{align}
\end{lemma}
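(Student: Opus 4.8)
The plan is to exploit the very definition of the index set $\mathcal{A}=\{k_j\}_{j\in J}$ in \eqref{eq:car.a}: since $\mathcal{A}$ is enumerated in strictly increasing order (with $k_0:=0$), the indices lying strictly between two consecutive extragradient steps $k_{j-1}$ and $k_j$ cannot belong to $\mathcal{A}$, hence must be null-step indices, during which the iterate $z_k$ is left unchanged by \eqref{eq:null.step}.

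Concretely, I would fix $j\in J$ and look at the (possibly empty) block of indices $k_{j-1}+1,\dots,k_j-1$. Because $k_{j-1}$ and $k_j$ are consecutive elements of $\mathcal{A}$, every index $k$ in this block satisfies $k\notin\mathcal{A}$, so by \eqref{eq:def.ab} we have $k\in\mathcal{B}$, i.e., a null step is executed at iteration $k$. Invoking \eqref{eq:null.step} for each such $k$ gives $z_k=z_{k-1}$, and telescoping over the block yields $z_{k_{j-1}}=z_{k_{j-1}+1}=\dots=z_{k_j-1}$, which is precisely \eqref{eq:ixk}.

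For completeness I would also dispose of the degenerate situations: if $k_j=k_{j-1}+1$ (two consecutive iterations both being extragradient steps), the intermediate block is empty and \eqref{eq:ixk} reduces to the tautology $z_{k_{j-1}}=z_{k_{j-1}}$; and for $j=1$ one has $k_0=0$, so the same reasoning over the block $1,\dots,k_1-1$ (all null steps, since $k_1$ is the first element of $\mathcal{A}$) gives $z_0=z_1=\dots=z_{k_1-1}$, again \eqref{eq:ixk}.

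There is no genuine obstacle here; the argument is purely combinatorial bookkeeping. The only point requiring a little care is the observation that the increasing enumeration in \eqref{eq:car.a} guarantees there is no extragradient index strictly between $k_{j-1}$ and $k_j$, so that the entire intermediate block consists of null steps and the telescoping in the previous paragraph is valid.
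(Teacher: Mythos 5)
Your argument is correct and is essentially the same as the paper's proof: both observe that every index strictly between the consecutive extragradient indices $k_{j-1}$ and $k_j$ lies in $\mathcal{B}$, so by \eqref{eq:null.step} the iterate is unchanged on that block and $z_{k_{j-1}}=z_{k_j-1}$. Your explicit treatment of the empty block and of the case $j=1$ with $k_0=0$ is just a more detailed spelling-out of the same bookkeeping.
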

\begin{proof}
Using \eqref{eq:def.ab} and \eqref{eq:car.a} we have 
$\{k\geq 1\,|\, k_{j-1}<k<k_j\}\subset \mathcal{B}$, for all $j\in J$. Consequently, using
the definition of $\mathcal{B}$ in \eqref{eq:def.ab} and \eqref{eq:null.step} we conclude that 
$z_k=z_{k_{j-1}}$ whenever $k_{j-1}\leq k<  k_j$. 
As a consequence, we obtain that \eqref{eq:ixk} follows
from the fact that $k_{j-1}\leq k_j-1<k_j$.
\end{proof}

\begin{proposition}
 \label{pr:eHPE}
 Let $\{z_k\}$, $\{(x_k,b_k)\}$, $\{\varepsilon_{b,k}\}$ and $\{(y_k,a_k)\}$ be generated 
by \emph{Algorithm \ref{inexact.dr}} and let the operator $S_{\gamma,\,A,\,B}$ be defined in \eqref{eq:def.s}. Define, for all $j\in J$,
\begin{align}
 \label{eq:pr:eHPE.02}
  \widetilde z_{k_j}:= y_{k_j}+\gamma b_{k_j},\quad v_{k_j}:=\gamma(a_{k_j}+b_{k_j}),\quad 
	 \varepsilon_{k_j}:=\gamma \varepsilon_{b,k_j}.
 \end{align}
Then, for all $j\in J$,
\begin{align}
 \begin{aligned}
 \label{eq:pr.eHPE.01}
  &v_{k_j}\in \left(S_{\gamma,\,A,\,B}\right)^{\varepsilon_{k_j}}(\widetilde z_{k_j}),\qquad
  \norm{ v_{k_j}+\widetilde z_{k_j}-z_{k_{j-1}}}^2+2 \varepsilon_{k_j}\leq \sigma^2\norm{\widetilde z_{k_j}-z_{k_{j-1}}}^2,\\[2mm]
	&z_{k_j}=z_{k_{j-1}} - v_{k_j}.
 \end{aligned}
\end{align}
As a consequence, the sequences $\{\widetilde z_{k_j}\}_{j\in J}$, $\{v_{k_j}\}_{j\in J}$, $\{\varepsilon_{k_j}\}_{j\in J}$
and $\{z_{k_j}\}_{j\in J}$ are generated by \emph{Algorithm \ref{hpe}} with $\lambda_j\equiv 1$ for 
solving \eqref{eq:mip.T} with $T:=S_{\gamma,\,A,\,B}$.
\end{proposition}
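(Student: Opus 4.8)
The plan is to verify the three displayed relations in \eqref{eq:pr.eHPE.01} directly from the definitions in \eqref{eq:pr:eHPE.02} and the defining recursions of Algorithm \ref{inexact.dr}, and then to observe that together with Lemma \ref{lm:ixk} they are exactly the conditions defining a run of the HPE method (Algorithm \ref{hpe}) with $\lambda_j\equiv 1$ applied to $T:=S_{\gamma,A,B}$. Since $J$ indexes precisely the extragradient iterations, every $k_j$ satisfies \eqref{eq:cond.err} and updates via \eqref{eq:ext.step}; this is what will be used throughout.

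First I would establish the inclusion $v_{k_j}\in (S_{\gamma,A,B})^{\varepsilon_{k_j}}(\widetilde z_{k_j})$. The natural route is to go through the definition \eqref{eq:def.teps} of the $\varepsilon$-enlargement together with the graph description \eqref{eq:def.s} of $S_{\gamma,A,B}$. Concretely, for any $(z',v')\in S_{\gamma,A,B}$, say $z'=y'+\gamma b'$, $v'=\gamma a'+\gamma b'$ with $b'\in B(x')$, $a'\in A(y')$ and $\gamma a'+y'=x'-\gamma b'$, I would expand $\inner{\widetilde z_{k_j}-z'}{v_{k_j}-v'}$ using $\widetilde z_{k_j}=y_{k_j}+\gamma b_{k_j}$ and $v_{k_j}=\gamma(a_{k_j}+b_{k_j})$. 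Splitting into an ``$A$-part'' and a ``$B$-part'' and using the identity $\gamma a_{k_j}+y_{k_j}=x_{k_j}-\gamma b_{k_j}$ from \eqref{eq:err.a}, the cross terms should collapse so that the inner product equals $\gamma\inner{y_{k_j}-y'}{a_{k_j}-a'}+\gamma\inner{x_{k_j}-x'}{b_{k_j}-b'}$; the first summand is $\geq0$ by monotonicity of $A$ (since $a_{k_j}\in A(y_{k_j})$), and the second is $\geq -\gamma\varepsilon_{b,k_j}=-\varepsilon_{k_j}$ because $b_{k_j}\in B^{\varepsilon_{b,k_j}}(x_{k_j})$ by \eqref{eq:err.b}. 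This is the step I expect to be the main obstacle — not conceptually hard, but it requires a careful algebraic reorganization of the defining equations so that exactly the right terms cancel and the enlargement bound appears with the correct constant $\varepsilon_{k_j}=\gamma\varepsilon_{b,k_j}$.

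Next I would verify the relative-error inequality. Using $\widetilde z_{k_j}-z_{k_{j-1}}=y_{k_j}+\gamma b_{k_j}-z_{k_{j-1}}$ (after invoking Lemma \ref{lm:ixk} to replace $z_{k_{j-1}}$ by $z_{k_j-1}$) and $v_{k_j}+\widetilde z_{k_j}=\gamma(a_{k_j}+b_{k_j})+y_{k_j}+\gamma b_{k_j}=(\gamma a_{k_j}+y_{k_j})+\gamma b_{k_j}+\gamma b_{k_j}=(x_{k_j}-\gamma b_{k_j})+2\gamma b_{k_j}=x_{k_j}+\gamma b_{k_j}$ by \eqref{eq:err.a}, one sees that $v_{k_j}+\widetilde z_{k_j}-z_{k_{j-1}}=\gamma b_{k_j}+x_{k_j}-z_{k_j-1}$. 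Hence the left-hand side of the HPE inequality equals $\norm{\gamma b_{k_j}+x_{k_j}-z_{k_j-1}}^2+2\gamma\varepsilon_{b,k_j}$, and the right-hand side equals $\sigma^2\norm{\gamma b_{k_j}+y_{k_j}-z_{k_j-1}}^2$; this is precisely condition \eqref{eq:cond.err}, which holds by definition of the index set $J$ (extragradient step). For the update equation, since $k_j\in\mathcal{A}$ we have $z_{k_j}=z_{k_j-1}-\gamma(a_{k_j}+b_{k_j})$ by \eqref{eq:ext.step}, and Lemma \ref{lm:ixk} rewrites this as $z_{k_j}=z_{k_{j-1}}-v_{k_j}$, which is \eqref{eq:hpe2} with $\lambda_j=1$.

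Finally, I would assemble the conclusion: the three displayed relations in \eqref{eq:pr.eHPE.01} are exactly steps (1) and (2) of Algorithm \ref{hpe} with $T=S_{\gamma,A,B}$, $z_0$ the initial point, $\sigma$ the given parameter, and $\lambda_j\equiv1$; since this holds for every $j\in J$ with consecutive iterates $z_{k_{j-1}}\to z_{k_j}$, the subsequences $\{\widetilde z_{k_j}\}$, $\{v_{k_j}\}$, $\{\varepsilon_{k_j}\}$, $\{z_{k_j}\}$ form a valid HPE trajectory, as claimed. I would remark that the only real inputs are Lemma \ref{lm:ixk} (to align the ``previous iterate'' indices), the two defining equations \eqref{eq:err.a}–\eqref{eq:err.b}, the branching rule \eqref{eq:cond.err}–\eqref{eq:ext.step}, and elementary monotonicity/enlargement facts from Proposition \ref{pr:teps}.
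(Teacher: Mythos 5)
Your proposal is correct and follows essentially the same route as the paper's proof: expanding $\inner{\widetilde z_{k_j}-z'}{v_{k_j}-v'}$ via the graph description \eqref{eq:def.s} and the identity in \eqref{eq:err.a} to reduce it to $\gamma\inner{y_{k_j}-y'}{a_{k_j}-a'}+\gamma\inner{x_{k_j}-x'}{b_{k_j}-b'}$, then using monotonicity of $A$ and the enlargement inclusion from \eqref{eq:err.b}, and finally identifying the HPE error bound and update with \eqref{eq:cond.err} and \eqref{eq:ext.step} through Lemma \ref{lm:ixk}. No gaps.
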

\begin{proof}
For any $(z',v'):=(y+\gamma b,\gamma a+\gamma b)\in S_{\gamma,\,A,\,B}$ we have, in particular, 
$b\in B(x)$ and $a\in A(y)$ (see \eqref{eq:def.s}). Using these inclusions, the inclusions in \eqref{eq:err.b} and \eqref{eq:err.a}, the monotonicity of the operator $A$ and \eqref{eq:def.teps} with $T=B$ we obtain
\begin{align}
 \label{eq:11}
 \inner{x_{k_j}-x}{b_{k_j}-b}\geq -\varepsilon_{b, k_j},\qquad
 \inner{y_{k_j}-y}{a_{k_j}-a}\geq 0.
\end{align}
Moreover, using the identity in \eqref{eq:err.a} and the corresponding one in \eqref{eq:def.s} we find
\begin{align}
 \label{eq:10}
 (y_{k_j}-y)+\gamma(b_{k_j}-b)=(x_{k_j}-x)-\gamma (a_{k_j}-a).
\end{align}
Using \eqref{eq:pr:eHPE.02}, \eqref{eq:11} and \eqref{eq:10} we have
\begin{align}
 \nonumber
 \inner{\widetilde z_{k_j}-z'}{v_{k_j}-v'}&=\inner{(y_{k_j}+\gamma b_{k_j})-(y+\gamma b)}{(\gamma a_{k_j}+\gamma b_{k_j})-(\gamma a+\gamma b)}\\
 \nonumber
 &=\inner{y_{k_j}-y+\gamma (b_{k_j}-b)}
 {\gamma(a_{k_j}-a)+\gamma(b_{k_j}-b)}\\
\nonumber
&=\gamma\inner{y_{k_j}-y+\gamma (b_{k_j}-b)}
 {a_{k_j}-a}+\gamma\inner{y_{k_j}-y+\gamma (b_{k_j}-b)}
 {b_{k_j}-b}\\
\nonumber
&=\gamma\inner{y_{k_j}-y+\gamma (b_{k_j}-b)}
 {a_{k_j}-a}+\gamma\inner{x_{k_j}-x-\gamma(a_{k_j}-a)}
 {b_{k_j}-b}\\
\nonumber
&=\gamma\inner{y_{k_j}-y}{a_{k_j}-a}+\gamma\inner{x_{k_j}-x}{b_{k_j}-b}\\
\nonumber
&\geq \gamma\inner{x_{k_j}-x}{b_{k_j}-b}\\
\nonumber
&\geq -\varepsilon_{k_j},
\end{align}
which combined with definition \eqref{eq:def.teps} gives the inclusion in \eqref{eq:pr.eHPE.01}.

From \eqref{eq:pr:eHPE.02}, \eqref{eq:ixk}, the identity in \eqref{eq:err.a} and \eqref{eq:cond.err}
we also obtain 
\begin{align*}
\norm{ v_{k_j}+\widetilde z_{k_j}-z_{k_{j-1}}}^2&=
\norm{ \gamma(a_{k_j}+b_{k_j})+(y_{k_j}+\gamma b_{k_j})-z_{k_{j}-1}}^2\\
&=\norm{ (x_{k_j}-y_{k_j})+(y_{k_j}+\gamma b_{k_j})-z_{k_{j}-1}}^2\\
&=\norm{ \gamma b_{k_j}+x_{k_j}-z_{k_{j}-1}}^2\\
&\leq \sigma^2 \norm{ \gamma b_{k_j}+y_{k_j}-z_{k_{j}-1}}^2-2\gamma\varepsilon_{b,k_j}\\
&=\sigma^2 \norm{\widetilde z_{k_j}-z_{k_{j-1}}}^2-2\varepsilon_{k_j},
\end{align*}
which gives the inequality in \eqref{eq:pr.eHPE.01}. To finish the proof of \eqref{eq:pr.eHPE.01},
note that the desired identity in \eqref{eq:pr.eHPE.01} follows from the first one in \eqref{eq:ext.step}, the second one
in \eqref{eq:pr:eHPE.02} and \eqref{eq:ixk}. The last statement of the proposition follows from \eqref{eq:pr:eHPE.02},
\eqref{eq:pr.eHPE.01} and Algorithm \ref{hpe}'s definition.
\end{proof}

\begin{proposition}{\bf (rate of convergence for extragradient steps)}
 \label{pr:c.ihpe}
Let $\{(x_k,b_k)\}$, $\{(y_k,a_k)\}$ and $\{\varepsilon_{b,\,k}\}$ be generated by \emph{Algorithm~\ref{inexact.dr}}
and consider the ergodic sequences defined in \eqref{eq:def.erg131}--\eqref{eq:def.erg133}. 
Let $d_{0,\gamma}$ and the set $J$ be defined
in \eqref{eq:def.d0} and \eqref{eq:car.a}, respectively. 
Then,
\begin{enumerate}
  \item[\emph{(a)}] For any $j\in  J$,
	  there exists $i\in \{1,\dots,j\}$ 
    such that  
		\begin{align}
		  \label{eq:706}
			%\begin{aligned}
      & a_{k_i}\in A(y_{k_i}),\quad  b_{k_i}\in B^{\varepsilon_{b,\,k_i}}(x_{k_i}),\\  
			\label{eq:306}
			&\gamma\norm{a_{k_i}+b_{k_i}}=\norm{x_{k_i}-y_{k_i}}\leq \dfrac{d_{0,\gamma}}{\sqrt{j}}
			\sqrt{\dfrac{1+\sigma}{1-\sigma}},\\
			\label{eq:206}
			&\varepsilon_{b,\,k_i}\leq \dfrac{\sigma^2 d_{0,\gamma}^{\,2}}{2\gamma(1-\sigma^2) j}\;.
     %\end{aligned}
		\end{align}
	 \item[\emph{(b)}]
	  For any $j\in J$,
		 \begin{align}
		  \label{eq:707}
			 %\begin{aligned}
      &\overline{a}_{k_j}\in A^{\overline{\varepsilon}_{a,k_j}}(\overline{y}_{k_j}),
			        \quad  \overline{b}_{k_j}\in B^{\overline{\varepsilon}_{b,\,k_j}}(\overline{x}_{k_j}),\\
      \label{eq:618}
			&\gamma\norm{\overline{a}_{k_j}+\overline{b}_{k_j}}
			       =\norm{\overline{x}_{k_j}-\overline{y}_{k_j}}\leq \dfrac{2d_{0,\gamma}}{j},\\
			\label{eq:619}
			&\overline{\varepsilon}_{a,\,k_j}+\overline{\varepsilon}_{b,\,k_j}\leq
			\dfrac{2(1+\sigma/\sqrt{1-\sigma^2})d_{0,\gamma}^{\,2}}{\gamma j}\,.
      %\end{aligned}
		 \end{align}
	\end{enumerate}
\end{proposition}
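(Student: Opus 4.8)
The plan is to reduce Proposition \ref{pr:c.ihpe} to the already-established complexity of the HPE method (Theorem \ref{lm:rhpe2}) applied to the operator $T := S_{\gamma,A,B}$, using the embedding furnished by Proposition \ref{pr:eHPE}. By that proposition, the sequences $\{\widetilde z_{k_j}\}$, $\{v_{k_j}\}$, $\{\varepsilon_{k_j}\}$, $\{z_{k_j}\}$ defined in \eqref{eq:pr:eHPE.02} are exactly an HPE trajectory (with $\lambda_j \equiv 1$) for $T = S_{\gamma,A,B}$ started at $z_0$. Since $d_{0,\gamma} = \dist(z_0, \mathrm{zer}(S_{\gamma,A,B}))$ by \eqref{eq:def.d0}, Theorem \ref{lm:rhpe2} applies verbatim with $\underline{\lambda} = 1$ and $d_0 = d_{0,\gamma}$, yielding both the pointwise and the ergodic bounds on $\|v_{k_j}\|$, $\varepsilon_{k_j}$, $\|\overline v_{k_j}\|$, $\overline\varepsilon_{k_j}$. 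The remaining work is purely translational: convert statements about $\widetilde z, v, \varepsilon$ (and their ergodic averages) back into statements about the original quantities $a, b, x, y, \varepsilon_b$.

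For part (a): Theorem \ref{lm:rhpe2}(a) gives an index $i \in \{1,\dots,j\}$ with $v_{k_i} \in (S_{\gamma,A,B})^{\varepsilon_{k_i}}(\widetilde z_{k_i})$, $\|v_{k_i}\| \le d_{0,\gamma}\sqrt{(1+\sigma)/(1-\sigma)}/\sqrt{j}$, and $\varepsilon_{k_i} \le \sigma^2 d_{0,\gamma}^2 / (2(1-\sigma^2)j)$. Now unwind the definitions: $v_{k_i} = \gamma(a_{k_i}+b_{k_i})$ and $\varepsilon_{k_i} = \gamma\varepsilon_{b,k_i}$, so $\|v_{k_i}\| = \gamma\|a_{k_i}+b_{k_i}\|$ and the bound on $\varepsilon_{k_i}$ divides by $\gamma$ to give \eqref{eq:206}. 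The inclusions in \eqref{eq:706} are just the original inclusions \eqref{eq:err.b} and \eqref{eq:err.a} that hold by construction of the algorithm. The only slightly nontrivial point is the identity $\gamma\|a_{k_i}+b_{k_i}\| = \|x_{k_i}-y_{k_i}\|$ in \eqref{eq:306}; this follows directly from the equation $\gamma a_{k_i} + y_{k_i} = x_{k_i} - \gamma b_{k_i}$ in \eqref{eq:err.a}, rearranged to $\gamma(a_{k_i}+b_{k_i}) = x_{k_i} - y_{k_i}$.

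For part (b): I would first observe that the ergodic sequences $\{\overline{\widetilde z}_{k_j}\}$, $\{\overline v_{k_j}\}$, $\{\overline\varepsilon_{k_j}\}$ associated (in the sense of \eqref{eq:d.eg}--\eqref{eq:d.eg3}, with $\lambda_\ell \equiv 1$) to the HPE trajectory of Proposition \ref{pr:eHPE} satisfy $\overline v_{k_j} = \gamma(\overline a_{k_j} + \overline b_{k_j})$ and $\overline{\widetilde z}_{k_j} = \overline y_{k_j} + \gamma\overline b_{k_j}$, by linearity of averaging. Then Theorem \ref{lm:rhpe2}(b) gives $\overline v_{k_j} \in (S_{\gamma,A,B})^{\overline\varepsilon_{k_j}}(\overline{\widetilde z}_{k_j})$ together with $\|\overline v_{k_j}\| \le 2d_{0,\gamma}/j$ and $\overline\varepsilon_{k_j} \le 2(1+\sigma/\sqrt{1-\sigma^2})d_{0,\gamma}^2/j$. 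Dividing by $\gamma$ and using $\gamma(\overline a_{k_j}+\overline b_{k_j}) = \overline x_{k_j} - \overline y_{k_j}$ (again from \eqref{eq:err.a}, averaged) yields \eqref{eq:618} and the bound $\overline\varepsilon_{k_j}/\gamma \le 2(1+\sigma/\sqrt{1-\sigma^2})d_{0,\gamma}^2/(\gamma j)$. The \textbf{main obstacle} here is establishing the separate inclusions $\overline a_{k_j} \in A^{\overline\varepsilon_{a,k_j}}(\overline y_{k_j})$ and $\overline b_{k_j} \in B^{\overline\varepsilon_{b,k_j}}(\overline x_{k_j})$ with the specific $\overline\varepsilon_{a,k_j}$, $\overline\varepsilon_{b,k_j}$ of \eqref{eq:def.erg132}--\eqref{eq:def.erg133}, and then checking that $\overline\varepsilon_{a,k_j} + \overline\varepsilon_{b,k_j}$ is bounded by (in fact equals, up to the aggregated-transportation identity) $\overline\varepsilon_{k_j}/\gamma$: one applies the transportation formula (Theorem \ref{th:tf}) to $A$ using $a_{k_\ell} \in A(y_{k_\ell}) \subset A^0(y_{k_\ell})$ (giving part (a) of that theorem with all $\varepsilon_\ell = 0$), and separately to $B$ using $b_{k_\ell} \in B^{\varepsilon_{b,k_\ell}}(x_{k_\ell})$; one must then verify the arithmetic identity $\overline\varepsilon_{k_j} = \gamma(\overline\varepsilon_{a,k_j} + \overline\varepsilon_{b,k_j})$, which amounts to expanding the cross terms $\inner{\widetilde z_{k_\ell} - \overline{\widetilde z}_{k_j}}{v_{k_\ell} - \overline v_{k_j}}$ exactly as in the proof of Proposition \ref{pr:eHPE} and matching them against the $A$- and $B$-parts — this is the same computation that appeared in the chain of equalities establishing the inclusion in \eqref{eq:pr.eHPE.01}, now carried out with differences of averages rather than of an arbitrary reference point.
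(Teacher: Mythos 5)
Your proposal is correct and takes essentially the same route as the paper: embed the extragradient iterates into the HPE method via Proposition \ref{pr:eHPE}, apply Theorem \ref{lm:rhpe2} with $\underline{\lambda}=1$ and $d_0=d_{0,\gamma}$, and translate back through $\gamma(a_k+b_k)=x_k-y_k$ and the definitions in \eqref{eq:pr:eHPE.02}. For part (b) the paper does exactly what you outline: the inclusions in \eqref{eq:707} come from the transportation formula (with $\varepsilon_\ell=0$ for $A$ by maximal monotonicity), and the identity $\overline{\varepsilon}_j=\gamma(\overline{\varepsilon}_{a,k_j}+\overline{\varepsilon}_{b,k_j})$ is verified by the same cross-term expansion used in Proposition \ref{pr:eHPE}'s proof, applied to differences of averages.
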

\begin{proof}
Note first that \eqref{eq:706} follow from the inclusions
in \eqref{eq:err.b} and \eqref{eq:err.a}. 
Using the last statement in Proposition \ref{pr:eHPE}, Theorem \ref{lm:rhpe2} 
(with $\underline{\lambda}=1$) 
and \eqref{eq:def.d0} we obtain that there exists $i\in \{1,\dots, j\}$
such that
\begin{align}
 \norm{v_{k_i}}\leq \dfrac{d_{0,\gamma}}{\sqrt{j}}
			\sqrt{\dfrac{1+\sigma}{1-\sigma}},\quad  \varepsilon_{k_i}\leq \dfrac{\sigma^2 d_{0,\gamma}^2}{2(1-\sigma^2)j}\,,
\end{align}
which, in turn, combined with the identity in \eqref{eq:err.a} and the definitions of $v_{k_i}$ and $\varepsilon_{k_i}$
in \eqref{eq:pr:eHPE.02} gives the desired inequalities in \eqref{eq:306} and \eqref{eq:206} (concluding the
proof of (a)) and
\begin{align}
 \label{eq:6001}
     \norm{\overline{v}_j}\leq \dfrac{2d_{0,\gamma}}{j},\quad
     \overline{\varepsilon}_j\leq \dfrac{2(1+\sigma/\sqrt{1-\sigma^2})d_{0,\gamma}^2}{j}\,,
\end{align}
where $\overline{v}_j$ and $\overline{\varepsilon}_j$ are defined in \eqref{eq:d.eg2}
and \eqref{eq:d.eg3}, respectively, with $\Lambda_j=j$ and 
\begin{align}
 \label{eq:2021}
 \lambda_\ell:=1,\quad v_\ell:=v_{k_\ell},\quad \varepsilon_\ell:=\varepsilon_{k_\ell},
\quad \tilde z_\ell:=\widetilde z_{k_\ell}\qquad \forall \ell=1,\dots, j.
\end{align}

Since the inclusions in \eqref{eq:707} are a direct consequence of the ones in \eqref{eq:err.b}
and \eqref{eq:err.a}, Proposition \ref{pr:teps}(d), \eqref{eq:def.erg131}--\eqref{eq:def.erg133}
and Theorem \ref{th:tf}, it follows from \eqref{eq:618}, \eqref{eq:619}
and \eqref{eq:6001} that
to finish the proof of (b), it suffices to prove
that 
\begin{align}
 \label{eq:2022}
 \overline{v_j}=\gamma(\overline{a}_{k_j}+\overline{b}_{k_j}),\quad \gamma(\overline{a}_{k_j}+\overline{b}_{k_j})=\overline{x}_{k_j}-\overline{y}_{k_j},
\qquad \overline{\varepsilon}_j=
\gamma(\overline{\varepsilon}_{a,\,k_j}+\overline{\varepsilon}_{b,\,k_j}). 
\end{align}
The first identity in \eqref{eq:2022} follows from 
\eqref{eq:2021}, the second identities
in \eqref{eq:d.eg2} and \eqref{eq:pr:eHPE.02}, and \eqref{eq:def.erg130}. 
%we find 
%
%\begin{align}
% \overline{v}_j=\dfrac{1}{j}\sum_{\ell=1}^j\,v_{k_\ell}=\dfrac{1}{j}\sum_{\ell=1}^j\,\gamma(a_{k_\ell}+b_{k_\ell})=
%\gamma(\overline{a}_{k_j}+\overline{b}_{k_j}),
%\end{align} 
%
%which gives the first identity in \eqref{eq:2022}. 
On the other hand, from \eqref{eq:err.a}
we have $\gamma(a_{k_\ell}+b_{k_\ell})=x_{k_\ell}-y_{k_\ell}$, for all $\ell=1,\dots, j$, which 
combined with \eqref{eq:def.erg131} and \eqref{eq:def.erg130} gives the second identity in \eqref{eq:2022}. Using
the latter identity and the second one in \eqref{eq:2022} we obtain
\begin{align}
 \label{eq:2023}
 (y_{k_\ell}-\overline{y}_{k_j})+\gamma(b_{k_\ell}-\overline{b}_{k_j})=(x_{k_\ell}-\overline{x}_{k_j})-\gamma(a_{k_\ell}-\overline{a}_{k_j})
\qquad \forall \ell=1,\dots, j.
\end{align}
Moreover, it follows from \eqref{eq:d.eg2}, \eqref{eq:2021}, 
the first identity in \eqref{eq:pr:eHPE.02}, \eqref{eq:def.erg131} 
and \eqref{eq:def.erg130} that
\begin{align}
 \label{eq:7001}
 \overline{\widetilde z}_{j}=\overline{\widetilde z}_{k_j}=\dfrac{1}{j}\sum_{\ell=1}^j\,\left(y_{k_\ell}+\gamma b_{k_\ell}\right)=
 \overline{y}_{k_j}+\gamma \overline{b}_{k_j}.
\end{align}
Using \eqref{eq:7001}, \eqref{eq:2021}, \eqref{eq:pr:eHPE.02} and \eqref{eq:2023} we obtain, for
all $\ell=1,\dots, j$,
\begin{align*}
 \nonumber
 \inner{\widetilde{z}_{\ell}-\overline{\widetilde z}_{j}}{v_{\ell}}&=
\inner{(y_{k_\ell}+\gamma b_{k_\ell})-(\overline{y}_{k_j}+\gamma \overline{b}_{k_j})}{\gamma(a_{k_\ell}+b_{k_\ell})}\\
  \nonumber 
	&=\gamma \inner{(y_{k_\ell}-\overline{y}_{k_j})+\gamma(b_{k_\ell}-\overline{b}_{k_j})}{a_{k_\ell}}+
	\gamma \inner{(y_{k_\ell}-\overline{y}_{k_j})+\gamma(b_{k_\ell}-\overline{b}_{k_j})}{b_{k_\ell}}\\
 \nonumber
&=\gamma \inner{(y_{k_\ell}-\overline{y}_{k_j})+\gamma(b_{k_\ell}-\overline{b}_{k_j})}{a_{k_\ell}}+
	\gamma \inner{(x_{k_\ell}-\overline{x}_{k_j})-\gamma(a_{k_\ell}-\overline{a}_{k_j})}{b_{k_\ell}}\\
	&=\gamma\inner{y_{k_\ell}-\overline{y}_{k_j}}{a_{k_\ell}}+\gamma^2\inner{b_{k_\ell}-\overline{b}_{k_j}}{a_{k_\ell}}
	+\gamma\inner{x_{k_\ell}-\overline{x}_{k_j}}{b_{k_\ell}}-\gamma^2\inner{a_{k_\ell}-\overline{a}_{k_j}}{b_{k_\ell}},
\end{align*}
which combined with \eqref{eq:d.eg3}, \eqref{eq:2021}, \eqref{eq:def.erg132} and \eqref{eq:def.erg133} yields
\begin{align*}
 \overline{\varepsilon}_j=\dfrac{1}{j}\sum_{\ell=1}^j\,
 \left[\varepsilon_\ell+\inner{\widetilde{z}_{\ell}-\overline{\widetilde z}_{j}}{v_{\ell}}\right]
 &= \dfrac{1}{j}\sum_{\ell=1}^j\,
 \gamma \left[\varepsilon_{b,\,k_\ell}+\inner{x_{k_\ell}-\overline{x}_{k_j}}{b_{k_\ell}}+\inner{y_{k_\ell}-\overline{y}_{k_j}}{a_{k_\ell}}\right]\\
 &=\gamma(\overline{\varepsilon}_{a,\,k_j}+\overline{\varepsilon}_{b,\,k_j}),
 \end{align*}  
which is exactly the last identity in \eqref{eq:2022}. This finishes the proof.
\end{proof}

\begin{proposition}{\bf (rate of convergence for null steps)}
 \label{pr:comp.b}
 Let $\{(x_k,b_k)\}$, $\{(y_k,a_k)\}$ and $\{\varepsilon_{b,k}\}$
be generated by \emph{Algorithm \ref{inexact.dr}}. Let $\{\beta_k\}$ and 
the set $\mathcal{B}$ be defined in \eqref{eq:def.betak} and \eqref{eq:def.ab}, respectively.
Then, for $k\in \mathcal{B}$,
 \begin{align}
  \label{eq:com.bb}
	  & a_k\in A(y_k),\quad b_k\in B^{\varepsilon_{b,\,k}}(x_k),\\
	  \label{eq:com.bb2}
		%\left(1+\dfrac{1}{\sigma}\right)\sqrt{\tau_0}
	&\gamma\norm{a_k+b_k}=\norm{x_k-y_k}\leq \dfrac{2\sqrt{\tau_0}}{\sigma}\;\theta^{\frac{\beta_{k-1}}{2}},\\
		%\sqrt{2\left(1+\dfrac{1}{\sigma^2}\right)\tau_0},\\
		\label{eq:com.bb3}
	& \gamma \varepsilon_{b,\,k}\leq \dfrac{\tau_0}{2}\,\theta^{\beta_{k-1}}.
 \end{align}
\end{proposition}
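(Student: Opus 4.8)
The inclusions in \eqref{eq:com.bb} are immediate from \eqref{eq:err.b} and \eqref{eq:err.a}, so the real content is the two estimates \eqref{eq:com.bb2} and \eqref{eq:com.bb3}. The plan is to exploit the fact that a null step at iteration $k\in\mathcal{B}$ is precisely the failure of the test \eqref{eq:cond.err}, i.e.
\begin{align*}
\norm{\gamma b_k+x_k-z_{k-1}}^2+2\gamma\varepsilon_{b,k} > \sigma^2\norm{\gamma b_k+y_k-z_{k-1}}^2,
\end{align*}
combined with the error bound \eqref{eq:err.b}, namely $\norm{\gamma b_k+x_k-z_{k-1}}^2+2\gamma\varepsilon_{b,k}\le \tau_{k-1}$. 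Chaining these two gives $\sigma^2\norm{\gamma b_k+y_k-z_{k-1}}^2 < \tau_{k-1}$, so $\norm{\gamma b_k+y_k-z_{k-1}}< \sqrt{\tau_{k-1}}/\sigma$. Also $\norm{\gamma b_k+x_k-z_{k-1}}\le\sqrt{\tau_{k-1}}$ from \eqref{eq:err.b} directly, hence by the triangle inequality
\begin{align*}
\norm{x_k-y_k}=\norm{(\gamma b_k+x_k-z_{k-1})-(\gamma b_k+y_k-z_{k-1})}\le \sqrt{\tau_{k-1}}+\frac{\sqrt{\tau_{k-1}}}{\sigma}\le \frac{2\sqrt{\tau_{k-1}}}{\sigma},
\end{align*}
using $0<\sigma<1$. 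The identity $\gamma(a_k+b_k)=x_k-y_k$ is just the equation in \eqref{eq:err.a} rearranged, which gives the left-hand equality in \eqref{eq:com.bb2}.

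For \eqref{eq:com.bb3}, I would note that $2\gamma\varepsilon_{b,k}\le \norm{\gamma b_k+x_k-z_{k-1}}^2+2\gamma\varepsilon_{b,k}\le\tau_{k-1}$ from \eqref{eq:err.b} (dropping the nonnegative norm term), so $\gamma\varepsilon_{b,k}\le\tau_{k-1}/2$. The final step in both bounds is to replace $\tau_{k-1}$ by its closed form. Since $k\in\mathcal{B}$ means a null step is executed at iteration $k$, the number of null steps up to iteration $k-1$ is $\beta_{k-1}$, and \eqref{eq:tau.beta} (with index $k-1$) gives $\tau_{k-1}=\theta^{\beta_{k-1}}\tau_0$. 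Substituting this into the two displayed bounds yields exactly \eqref{eq:com.bb2} and \eqref{eq:com.bb3}.

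The argument is essentially a two-line manipulation, so there is no serious obstacle; the only point requiring a moment's care is the bookkeeping with $\beta_{k-1}$ versus $\beta_k$ — one must use $\tau_{k-1}$, not $\tau_k$, inside the estimates (the error bound \eqref{eq:err.b} and the test \eqref{eq:cond.err} both reference $\tau_{k-1}$), and then invoke \eqref{eq:tau.beta} at index $k-1$. A secondary point is justifying $\sqrt{\tau_{k-1}}+\sqrt{\tau_{k-1}}/\sigma\le 2\sqrt{\tau_{k-1}}/\sigma$, which holds because $\sigma<1$ forces $1\le 1/\sigma$.
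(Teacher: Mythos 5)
Your proposal is correct and follows essentially the same route as the paper's own proof: both chain the failed test \eqref{eq:cond.err} with the bound in \eqref{eq:err.b} to control $\norm{\gamma b_k+y_k-z_{k-1}}$ and $\gamma\varepsilon_{b,k}$ by $\tau_{k-1}$, apply the triangle inequality to $\norm{x_k-y_k}$ (the paper writes the constant as $1+1/\sigma\le 2/\sigma$, exactly your estimate), and finish with $\tau_{k-1}=\theta^{\beta_{k-1}}\tau_0$ from \eqref{eq:tau.beta}. No gaps.
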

\begin{proof}
Note first that \eqref{eq:com.bb} follows from \eqref{eq:err.b} and \eqref{eq:err.a}.
Using \eqref{eq:def.ab}, \eqref{eq:err.b} and Step 3.b's definition (see Algorithm \ref{inexact.dr})
we obtain
\begin{align*}
 \tau_{k-1}\geq \norm{\underbrace{\gamma b_k+x_k-z_{k-1}}_{p_k}}^2+2\gamma\varepsilon_{b,k}>
     \sigma^2\norm{\underbrace{\gamma b_k+y_k-z_{k-1}}_{q_k}}^2,
\end{align*}
which, in particular, gives
\begin{align}
 \label{eq:1301}
 \gamma \varepsilon_{b,\,k}\leq \dfrac{\tau_{k-1}}{2},
\end{align}
and combined with the identity in \eqref{eq:err.a} yields,
\begin{align}
 \nonumber
 \gamma\norm{a_k+b_k}=\norm{x_k-y_k}&=\norm{p_k-q_k}\\
\nonumber
 &\leq \norm{p_k}+\norm{q_k}\\ 
 %&\leq \sqrt{2\left(\norm{p_k}^2+\norm{q_k}^2\right)}\\
 \label{eq:ss34}
	&\leq \left(1+\dfrac{1}{\sigma}\right)\sqrt{\tau_{k-1}}.
	%&\leq \sqrt{2\left(1+\dfrac{1}{\sigma^2}\right)\tau_{k-1}}.
\end{align}
To finish the proof, use \eqref{eq:1301}, \eqref{eq:ss34} and 
\eqref{eq:tau.beta}.
\end{proof}

Next we present the main results regarding the pointwise and ergodic iteration complexity of 
Algorithm \ref{inexact.dr} for finding approximate solutions of \eqref{eq:mip} satisfying the termination criterion
\eqref{eq:def.apsol}. While Theorem \ref{th:idr.main} is a consequence of Proposition \ref{pr:c.ihpe}(a) and Proposition \ref{pr:comp.b}, the ergodic iteration complexity of Algorithm \ref{inexact.dr}, namely Theorem \ref{th:idr.main.erg}, follows by combining the latter proposition and
Proposition \ref{pr:c.ihpe}(b). Since the proof of Theorem \ref{th:idr.main.erg} follows the same outline of 
Theorem \ref{th:idr.main}'s proof, it will be omitted.

\begin{theorem}\emph{{\bf (pointwise iteration complexity of Algorithm \ref{inexact.dr})}}
 \label{th:idr.main}
Assume that $\max\{(1-\sigma)^{-1},\sigma^{-1}\}=\mathcal{O}(1)$ and let $d_{0,\gamma}$ be as
in \eqref{eq:def.d0}. Then,  for given tolerances $\rho,\epsilon>0$, \emph{Algorithm \ref{inexact.dr}} finds $a,b,x,y\in \HH$ 
and $\varepsilon_b\geq 0$ such that
\begin{align}
 \label{eq:main.02}
 a\in A(y),\;b\in B^{\varepsilon_b}(x),\quad \gamma\norm{a+b}=\norm{x-y} \leq \rho,\quad 
\varepsilon_b\leq \epsilon
\end{align}
after performing \emph{at most}
\begin{align}
 \label{eq:ext.steps.alg3}
% k_{\mbox{p,ext}}:=
\mathcal{O}\left(1+\max\left\{\dfrac{d_{0,\gamma}^{\,2}}{\rho^2},
	  \dfrac{d_{0,\gamma}^{\,2}}{\gamma\epsilon}\right\} \right)
\end{align}
\emph{extragradient steps} and
\begin{align}
 \label{eq:null.steps.alg3}
 \mathcal{O}\left(1+\max\left\{\log^+\left(\dfrac{\sqrt{\tau_0}}{\rho}\right),\log^+\left(\dfrac{\tau_0}{\gamma \epsilon}\right)\right\}\right)
\end{align}
%a
\emph{null steps}. As a consequence, under the above assumptions, \emph{Algorithm \ref{inexact.dr}} terminates with $a,b,x,y\in \HH$ 
and $\varepsilon_{b}\geq 0$ satisfying \eqref{eq:main.02} in at most
\begin{align}
 \label{eq:main.01}
  %M_p:=
	\mathcal{O}\left(1+\max\left\{\dfrac{d_{0,\gamma}^{\,2}}{\rho^2},
	  \dfrac{d_{0,\gamma}^{\,2}}{\gamma\epsilon}\right\}
		+
\max\left\{\log^+\left(\dfrac{\sqrt{\tau_0}}{\rho}\right),\log^+\left(\dfrac{\tau_0}{\gamma \epsilon}
\right)\right\}\right)
\end{align}
iterations. 
\end{theorem}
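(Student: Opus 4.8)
The plan is to obtain the claimed complexity bound by counting separately the number of extragradient steps and the number of null steps that Algorithm~\ref{inexact.dr} can perform before a triple satisfying the target criterion \eqref{eq:main.02} is produced, and then adding the two counts. Recall that $\mathcal{A}$ and $\mathcal{B}$ partition the iteration indices into extragradient and null steps, respectively.

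First I would bound the number of extragradient steps. Every iteration index $k\in\mathcal{A}$ is some $k_i$ in the enumeration \eqref{eq:car.a}, and by Proposition~\ref{pr:eHPE} the associated subsequence is a genuine run of the HPE method applied to $T:=S_{\gamma,A,B}$ with $\underline{\lambda}=1$, started from $z_0$, whose distance to the zero set is $d_{0,\gamma}$. Hence Proposition~\ref{pr:c.ihpe}(a) applies: after $j$ extragradient steps there is an index $i\le j$ with
\[
a_{k_i}\in A(y_{k_i}),\quad b_{k_i}\in B^{\varepsilon_{b,k_i}}(x_{k_i}),\quad
\gamma\|a_{k_i}+b_{k_i}\|=\|x_{k_i}-y_{k_i}\|\le \frac{d_{0,\gamma}}{\sqrt{j}}\sqrt{\tfrac{1+\sigma}{1-\sigma}},\quad
\varepsilon_{b,k_i}\le\frac{\sigma^2 d_{0,\gamma}^{\,2}}{2\gamma(1-\sigma^2)j}.
\]
Using the hypothesis $\max\{(1-\sigma)^{-1},\sigma^{-1}\}=\mathcal{O}(1)$, these bounds are $\mathcal{O}(d_{0,\gamma}/\sqrt j)$ and $\mathcal{O}(d_{0,\gamma}^2/(\gamma j))$, so forcing both to be $\le\rho$ and $\le\epsilon$ respectively requires $j=\mathcal{O}(\max\{d_{0,\gamma}^2/\rho^2,\ d_{0,\gamma}^2/(\gamma\epsilon)\})$; the leading ``$1+$'' simply absorbs the degenerate case where the set $\mathcal{A}$ is empty or tiny. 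This yields \eqref{eq:ext.steps.alg3}.

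Next I would bound the number of null steps. If $k\in\mathcal{B}$, Proposition~\ref{pr:comp.b} gives $a_k\in A(y_k)$, $b_k\in B^{\varepsilon_{b,k}}(x_k)$ together with $\gamma\|a_k+b_k\|=\|x_k-y_k\|\le (2\sqrt{\tau_0}/\sigma)\,\theta^{\beta_{k-1}/2}$ and $\gamma\varepsilon_{b,k}\le(\tau_0/2)\theta^{\beta_{k-1}}$. Here $\beta_{k-1}$ counts the null steps executed strictly before iteration $k$, so among the first $m$ null steps the last one has $\beta_{k-1}=m-1$; since $0<\theta<1$, after $m$ null steps the right-hand sides decay geometrically like $\theta^{m/2}$ and $\theta^m$. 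Solving $(2\sqrt{\tau_0}/\sigma)\theta^{(m-1)/2}\le\rho$ and $(\tau_0/2)\theta^{m-1}\le\gamma\epsilon$ for $m$ gives $m=\mathcal{O}(1+\max\{\log^+(\sqrt{\tau_0}/\rho),\ \log^+(\tau_0/(\gamma\epsilon))\})$, where again $\sigma^{-1}=\mathcal{O}(1)$ is used to drop the constant and $\log^+(t):=\max\{0,\log t\}$ handles the case where no null step is needed. This is \eqref{eq:null.steps.alg3}.

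Finally I would assemble the total: any iteration $k$ is either in $\mathcal{A}$ or in $\mathcal{B}$, and from the two counts above, once the number of extragradient steps exceeds the bound in \eqref{eq:ext.steps.alg3} \emph{and} the number of null steps exceeds the bound in \eqref{eq:null.steps.alg3}, at least one past iteration must have produced a tuple meeting all three requirements in \eqref{eq:main.02}; hence the algorithm terminates after at most the sum of the two bounds, which is \eqref{eq:main.01}. The only mildly delicate point — and the step I would treat most carefully — is the bookkeeping connecting the abstract HPE counter $j$ of Proposition~\ref{pr:eHPE}/\ref{pr:c.ihpe} and the null-step counter $\beta_k$ of Proposition~\ref{pr:comp.b} back to the running iteration index $k$, so that ``at most $N_1$ extragradient steps plus at most $N_2$ null steps'' really does translate into ``at most $N_1+N_2$ iterations''; this is immediate from the partition $\{1,\dots,k\}=(\mathcal{A}\cap\{1,\dots,k\})\cup(\mathcal{B}\cap\{1,\dots,k\})$ but is worth stating explicitly. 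Everything else is the routine inversion of the geometric and $1/j$, $1/\sqrt j$ rates already recorded in the cited propositions.
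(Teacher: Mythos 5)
Your proposal is correct and follows essentially the same route as the paper: bound the extragradient steps by embedding them into the HPE method via Propositions \ref{pr:eHPE} and \ref{pr:c.ihpe}(a), bound the null steps via the geometric decay of $\tau_k$ in Proposition \ref{pr:comp.b}, and add the two counts (the paper phrases this as a case split on whether $\#\mathcal{A}$ reaches the threshold $M_{\text{ext}}$). The only nitpick is your final combination sentence, where the connective should be ``or'' rather than ``and'': the correct pigeonhole statement is that if the total iteration count exceeded the sum of the two bounds, then at least one of the two counts would exceed its own bound, and that family of steps alone already yields a tuple satisfying \eqref{eq:main.02}.
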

\begin{proof}
Let $\mathcal{A}$ be as in \eqref{eq:def.ab} and consider the cases:
%First define (cf. \eqref{eq:ext.steps.alg3})
\begin{align}
 \label{eq:def.m01}
 \#\mathcal{A}\geq M_{\text{ext}}:=\left\lceil \max\left\{\dfrac{2\,d_{0,\gamma}^{\,2}}{(1-\sigma)\rho^2},
	  \dfrac{\sigma^2d_{0,\gamma}^{\,2}}{2\gamma(1-\sigma^2)\epsilon}\right\} \right\rceil\quad \mbox{and}\quad
	\#\mathcal{A}<M_{\text{ext}}.
	\end{align}
In the first case, the desired bound \eqref{eq:ext.steps.alg3} on the number of extragradient steps
to find $a,b,x,y\in \HH$ and $\varepsilon_b\geq 0$ satisfying \eqref{eq:main.02}
follows from the definition of $J$ in \eqref{eq:car.a} and Proposition \ref{pr:c.ihpe}(a).

On the other hand, in the second case, i.e., $\#\mathcal{A}<M_{\text{ext}}$, the desired bound \eqref{eq:null.steps.alg3} 
is a direct consequence of Proposition \ref{pr:comp.b}. The last statement of the theorem follows from \eqref{eq:ext.steps.alg3}
and \eqref{eq:null.steps.alg3}.
\end{proof}

Next is the main result on the ergodic iteration complexity of Algorithm \ref{inexact.dr}. As mentioned before, its 
proof follows the same outline of Theorem \ref{th:idr.main}'s proof, now applying Proposition \ref{pr:c.ihpe}(b) instead
of the item (a) of the latter proposition.

\begin{theorem}\emph{{\bf (ergodic iteration complexity of Algorithm \ref{inexact.dr})}}
 \label{th:idr.main.erg}
For given tolerances $\rho,\epsilon>0$, under the same assumptions of \emph{Theorem \ref{th:idr.main}},  
\emph{Algorithm \ref{inexact.dr}} provides $a,b,x,y\in \HH$ and $\varepsilon_a, \varepsilon_b\geq 0$ such that
\begin{align}
 \label{eq:main.04}
 a\in A^{\varepsilon_a}(y),\;b\in B^{\varepsilon_b}(x),\quad \gamma \norm{a+b}=\norm{x-y}\leq \rho,\quad 
\varepsilon_a+\varepsilon_b\leq \epsilon.
\end{align}
after performing at most
\begin{align}
 \label{eq:ext.steps.alg3e}
% k_{\mbox{p,ext}}:=
\mathcal{O}\left(1+\max\left\{\dfrac{d_{0,\gamma}}{\rho},
	  \dfrac{d_{0,\gamma}^{\,2}}{\gamma\epsilon}\right\} \right)
\end{align}
\emph{extragradient steps} and
\begin{align}
 \label{eq:null.steps.alg3e}
 \mathcal{O}\left(1+\max\left\{\log^+\left(\dfrac{\sqrt{\tau_0}}{\rho}\right),\log^+\left(\dfrac{\tau_0}{\gamma \epsilon}\right)\right\}\right)
\end{align}
%a
\emph{null steps}. As a consequence, under the above assumptions, \emph{Algorithm \ref{inexact.dr}} terminates with $a,b,x,y\in \HH$ 
and $\varepsilon_{a},\varepsilon_{b}\geq 0$ satisfying \eqref{eq:main.04} in at most
\begin{align}
 \label{eq:main.03}
  %M_e:=
	\mathcal{O}\left(1+\max\left\{\dfrac{d_{0,\gamma}}{\rho},
	  \dfrac{d_{0,\gamma}^{\,2}}{\gamma\epsilon}\right\}
		+
\max\left\{\log^+\left(\dfrac{\sqrt{\tau_0}}{\rho}\right),\log^+\left(\dfrac{\tau_0}{\gamma \epsilon}
\right)\right\}\right)
\end{align}
iterations.
\end{theorem}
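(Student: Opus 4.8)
The plan is to mirror the proof of Theorem \ref{th:idr.main}, simply replacing the pointwise estimates of Proposition \ref{pr:c.ihpe}(a) by the ergodic ones of Proposition \ref{pr:c.ihpe}(b). Recall that every iteration of Algorithm \ref{inexact.dr} is either an extragradient step (index in $\mathcal A$, enumerated by $J$ as in \eqref{eq:car.a}) or a null step (index in $\mathcal B$, counted by $\{\beta_k\}$). First I would fix the two natural stopping thresholds. Let
\[
 M_{\text{ext}}:=\left\lceil\max\left\{\frac{2\,d_{0,\gamma}}{\rho},\;\frac{2\,(1+\sigma/\sqrt{1-\sigma^2})\,d_{0,\gamma}^{\,2}}{\gamma\epsilon}\right\}\right\rceil,
\]
which is exactly the value of $j$ forcing the ergodic bounds $\|\overline x_{k_j}-\overline y_{k_j}\|\le 2d_{0,\gamma}/j$ and $\overline\varepsilon_{a,k_j}+\overline\varepsilon_{b,k_j}\le 2(1+\sigma/\sqrt{1-\sigma^2})d_{0,\gamma}^{\,2}/(\gamma j)$ of Proposition \ref{pr:c.ihpe}(b) to drop below $\rho$ and $\epsilon$; and let $M_{\text{null}}$ be the least integer with $\tfrac{2\sqrt{\tau_0}}{\sigma}\,\theta^{M_{\text{null}}/2}\le\rho$ and $\tfrac{\tau_0}{2\gamma}\,\theta^{M_{\text{null}}}\le\epsilon$, i.e. the value making the pointwise null-step bounds \eqref{eq:com.bb2}--\eqref{eq:com.bb3} meet the tolerances. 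Solving the two geometric inequalities and using $\theta\in(0,1)$ fixed together with the hypothesis $\max\{(1-\sigma)^{-1},\sigma^{-1}\}=\mathcal O(1)$ (which also gives $(1-\sigma^2)^{-1}=\mathcal O(1)$, hence $1+\sigma/\sqrt{1-\sigma^2}=\mathcal O(1)$), one gets $M_{\text{ext}}=\mathcal O(1+\max\{d_{0,\gamma}/\rho,\,d_{0,\gamma}^{\,2}/(\gamma\epsilon)\})$ and $M_{\text{null}}=\mathcal O(1+\max\{\log^+(\sqrt{\tau_0}/\rho),\,\log^+(\tau_0/(\gamma\epsilon))\})$.

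Next I would run the same two-case dichotomy as in Theorem \ref{th:idr.main}. If $\#\mathcal A\ge M_{\text{ext}}$, then taking $j=M_{\text{ext}}\in J$ in Proposition \ref{pr:c.ihpe}(b) shows that the ergodic quadruple $(\overline a_{k_j},\overline b_{k_j},\overline x_{k_j},\overline y_{k_j})$ with $(\overline\varepsilon_{a,k_j},\overline\varepsilon_{b,k_j})$ satisfies \eqref{eq:main.04}; this yields the bound \eqref{eq:ext.steps.alg3e} on the number of extragradient steps. If instead $\#\mathcal A<M_{\text{ext}}$, then after the last extragradient step all subsequent iterations are null steps, so $\beta_k$ increases by one at each of them; by Proposition \ref{pr:comp.b}, once $\beta_{k-1}\ge M_{\text{null}}$ the pointwise quintuple $(a_k,b_k,x_k,y_k,\varepsilon_{b,k})$ at that null step already satisfies \eqref{eq:main.04} (indeed with $\varepsilon_a=0$, since $a_k\in A(y_k)$), which gives \eqref{eq:null.steps.alg3e}.

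Finally I would combine the two bounds. Since every iteration is of exactly one of the two types, if the algorithm had run for more than $M_{\text{ext}}+M_{\text{null}}$ iterations without meeting \eqref{eq:main.04}, a pigeonhole argument would force it to have executed at least $M_{\text{ext}}$ extragradient steps or at least $M_{\text{null}}+1$ null steps, and in either case the criterion would already have been met by the point produced above --- a contradiction. Hence the algorithm terminates within $\mathcal O(M_{\text{ext}}+M_{\text{null}})$ iterations, which is precisely \eqref{eq:main.03}. The whole argument is routine; the only place where real work is needed is the identification of the ergodic averages \eqref{eq:def.erg131}--\eqref{eq:def.erg133} with the HPE ergodic sequences \eqref{eq:d.eg}--\eqref{eq:d.eg3} attached to the embedding of Proposition \ref{pr:eHPE}, so that Theorem \ref{lm:rhpe2}(b) applies verbatim --- but this was already carried out inside the proof of Proposition \ref{pr:c.ihpe}, so here it remains only to balance the two thresholds, which is bookkeeping. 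For this reason the detailed write-up can safely be omitted, exactly as in the statement.
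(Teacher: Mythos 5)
Your proposal is correct and follows exactly the paper's intended argument: the paper proves this theorem by repeating the two-case dichotomy of Theorem \ref{th:idr.main} (comparing $\#\mathcal{A}$ with a threshold built from the ergodic bounds) with Proposition \ref{pr:c.ihpe}(b) in place of part (a), and Proposition \ref{pr:comp.b} handling the null steps. Your write-up merely makes the bookkeeping (choice of $M_{\text{ext}}$, $M_{\text{null}}$, and the combination of the two counts) explicit, which is consistent with the paper's omitted details.
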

\begin{proof}
The proof follows the same outline of Theorem \ref{th:idr.main}'s proof, now applying Proposition \ref{pr:c.ihpe}(b) instead
of Proposition \ref{pr:c.ihpe}(a).
\end{proof}

\noindent
{\bf Remarks.}
\begin{enumerate}
 \item Theorem \ref{th:idr.main.erg} ensures that for given tolerances $\rho, \epsilon>0$, up to an additive logarithmic factor, Algorithm \ref{inexact.dr} requires
no more than
\[
 \mathcal{O}\left(1+\max\left\{\dfrac{d_{0,\gamma}}{\rho},\dfrac{d_{0,\gamma}^{\,2}}{\gamma\epsilon}\right\}\right)
\]
iterations to find an approximate solution of the monotone inclusion problem \eqref{eq:mip} according to the termination criterion \eqref{eq:def.apsol}.
\item While the (ergodic) upper bound on the number of iterations provided in \eqref{eq:main.03} is better
than the corresponding one in \eqref{eq:main.01} (in terms of the dependence on the tolerance $\rho>0$) by a factor of 
$\mathcal{O}(1/\rho)$, the inclusion in \eqref{eq:main.04} is potentially weaker than the corresponding one in \eqref{eq:main.02}, since
one may have $\varepsilon_a>0$ in \eqref{eq:main.04}, and the set $A^{\varepsilon_a}(y)$ is in general 
larger than $A(y)$.
\item Iteration complexity results similar to the ones in Proposition \ref{pr:c.ihpe} were recently obtained for a relaxed Peaceman-Rachford method
in \cite{mon.che-com.17} . We emphasize that, in contrast to this work, 
the latter reference considers
only the case where the resolvents $J_{\gamma A}$ and $J_{\gamma B}$ of $A$ and $B$, respectively, are both computable. 
\end{enumerate}

The proposition below will be important in the next section.

\begin{proposition}%[{\cite[Lemma 4.2 and Eq. (34)]{mon.sva-hpe.siam10}}]
\label{pr:imp.fact2}
Let $\{z_k\}$ be generated by \emph{Algorithm \ref{inexact.dr}} and 
$d_{0,\gamma}$ be as in \eqref{eq:def.d0}. Then, 
\begin{align}
\label{eq:ineq.zk02}
\|z_k-z_0\| \le 2 d_{0,\gamma}\quad \forall k\geq 1.
\end{align}
\end{proposition}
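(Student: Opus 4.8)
The plan is to reduce the statement to Proposition \ref{pr:imp.fact} applied to the subsequence of extragradient iterates, which by Proposition \ref{pr:eHPE} is an HPE sequence. First I would record that the iterate is frozen at every null step: by \eqref{eq:null.step} one has $z_k=z_{k-1}$ whenever $k\in\mathcal{B}$, whereas $z_k$ is updated only at extragradient steps via \eqref{eq:ext.step}. Combining this with the description of $\mathcal{A}=\{k_j\}_{j\in J}$ in \eqref{eq:def.ab}--\eqref{eq:car.a} (and the convention $k_0=0$), one gets that for every $k\ge 1$ there is a unique $j\in J\cup\{0\}$ with $z_k=z_{k_j}$, namely the largest $k_j\in\mathcal{A}$ with $k_j\le k$; if no extragradient step has yet been executed, then $z_k=z_0=z_{k_0}$. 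This is exactly the kind of bookkeeping already carried out in the proof of Lemma \ref{lm:ixk}. Hence it suffices to bound $\|z_{k_j}-z_0\|$ for $j\in J$, the case $j=0$ being trivial.

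Next I would invoke Proposition \ref{pr:eHPE}: the sequence $\{z_{k_j}\}_{j\in J}$ is precisely the sequence of iterates produced by the HPE method (Algorithm \ref{hpe}) with $\lambda_j\equiv 1$ applied to the inclusion $0\in S_{\gamma,A,B}(z)$, started from $z_{k_0}=z_0$ (the identity $z_{k_0}=z_0$ following from Lemma \ref{lm:ixk} together with $k_0=0$). To be allowed to use Proposition \ref{pr:imp.fact} I must check that $\mathrm{zer}(S_{\gamma,A,B})\ne\emptyset$; this is immediate from assumption (D2) and the equivalence recorded after \eqref{eq:def.s}, namely that $z^*$ solves \eqref{eq:mip} if and only if $z^*=J_{\gamma B}(x^*)$ for some $x^*\in\mathrm{zer}(S_{\gamma,A,B})$, so that $(A+B)^{-1}(0)\ne\emptyset$ forces $\mathrm{zer}(S_{\gamma,A,B})\ne\emptyset$. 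Moreover, by \eqref{eq:def.d0} the distance of $z_0$ to $\mathrm{zer}(S_{\gamma,A,B})$ equals $d_{0,\gamma}$.

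Finally, applying Proposition \ref{pr:imp.fact} with $T=S_{\gamma,A,B}$ to the HPE sequence $\{z_{k_j}\}_{j\in J}$, its consequence \eqref{eq:ineq.zk0} gives $\|z_{k_j}-z_0\|\le 2d_{0,\gamma}$ for every $j\in J$. Together with the reduction of the first paragraph, this yields $\|z_k-z_0\|\le 2d_{0,\gamma}$ for all $k\ge 1$, which is the claim.

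I do not expect a genuine obstacle here: the only points requiring a little care are the index bookkeeping relating an arbitrary $k$ to the preceding extragradient index $k_j$ (already present in Lemma \ref{lm:ixk}) and the verification that $S_{\gamma,A,B}$ has a zero, so that Proposition \ref{pr:imp.fact} is legitimately applicable.
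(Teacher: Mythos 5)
Your proposal is correct and follows essentially the same route as the paper: split indices into extragradient and null steps, observe that at a null step $z_k$ coincides with $z_0$ or with a previous extragradient iterate, and apply Proposition \ref{pr:imp.fact} to the HPE sequence $\{z_{k_j}\}_{j\in J}$ furnished by Proposition \ref{pr:eHPE}. The only difference is that you spell out the (implicit in the paper) verification that $\mbox{zer}(S_{\gamma,A,B})\neq\emptyset$, which is a harmless addition.
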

\begin{proof}
Note that (i) if $k=k_j\in \mathcal{A}$, for some $j\in J$, see \eqref{eq:car.a}, then \eqref{eq:ineq.zk02}
follows from the last statement in Proposition \ref{pr:eHPE} and Proposition \ref{pr:imp.fact}; (ii) if 
$k\in \mathcal{B}$, from the first identity in \eqref{eq:null.step}, see \eqref{eq:def.ab}, we find that
 either $z_k=z_0$, in which case \eqref{eq:ineq.zk02} holds trivially, or $z_k=z_{k_j}$ for some $j\in J$, in which case
the results follows from (i).
 \end{proof}

\section{A Douglas-Rachford-Tseng's forward-backward (F-B) four-operator splitting method}
 \label{sec:drt}

In this section, we consider problem \eqref{eq:drti}, i.e., the problem of finding $z\in \HH$ such that
\begin{align}
 \label{eq:drt}
 0\in A(z)+C(z)+F_1(z)+F_2(z)
\end{align}
where the following hold:
\begin{itemize}
 \item[(E1)] $A$ and $C$ are (set-valued) maximal monotone operators on $\HH$. 
 \item[(E2)]  $F_1:D(F_1)\subset \HH\to \HH$ is monotone and $L$-Lipschitz continuous on a (nonempty) closed convex set $\Omega$
such that $D(C)\subset \Omega\subset D(F_1)$, i.e., $F_1$ is monotone on $\Omega$ and there exists $L\geq 0$ such that
 \begin{align}
  \label{eq:f.Lip}
  \norm{F_1(z)-F_1(z')}\leq L\norm{z-z'}\qquad \forall z,z'\in \Omega.
 \end{align}
 \item[(E3)] $F_2:\HH\to \HH$ is $\eta-$cocoercivo, i.e., there exists $\eta>0$ such that
 \begin{align}
  \label{eq:f.coco}
  \inner{F_2(z)-F_2(z')}{z-z'}\geq \eta\norm{F_2(z)-F_2(z')}^2\qquad \forall z,z'\in \HH.
 \end{align}
 \item[(E4)] $B^{-1}(0)$ is nonempty, where
 \begin{align}
 \label{eq:def.bcff}
 B:=C+F_1+F_2.
\end{align}
 \item[(E5)] The solution set of \eqref{eq:drt} is nonempty.
\end{itemize} 

Aiming at solving the monotone inclusion \eqref{eq:drt}, we present and study the iteration complexity of a (four-operator) splitting method
which combines Algorithm \ref{inexact.dr} (used as an outer iteration) and a Tseng's forward-backward (F-B) splitting type method 
(used as an inner iteration for solving, for each outer iteration, the prox subproblems in \eqref{eq:err.b}). We prove results
on pointwise and ergodic iteration complexity of the proposed four-operator splitting algorithm by analyzing it in the framework of Algorithm \ref{inexact.dr} for solving \eqref{eq:mip} with $B$ as in \eqref{eq:def.bcff} and under assumptions (E1)--(E5). The (outer)
 iteration complexities will follow from results on pointwise and ergodic iteration complexities of Algorithm \ref{inexact.dr}, 
obtained in Section \ref{sec:dr}, while the computation of an upper bound on the overall number of inner iterations required to achieve 
prescribed tolerances will require a separate analysis. 
Still regarding the results
on iteration complexity, we mention that we consider the following notion of approximate solution for \eqref{eq:drt}: 
given tolerances $\rho, \epsilon>0$, find $a,b,x,y\in \HH$ and $\varepsilon_a,\varepsilon_b\geq 0$ such that
\begin{align}
 %\label{eq:app.sol.acff}
 %\begin{aligned}
\nonumber
 &a\in A^{\varepsilon_a}(y),\\
 \label{eq:app.sol.acff2}
 %&b\in \left(C(x)+F_1(x)+F_2^{\varepsilon_b}(x)\right)\cup \left(C+F_1+F_2\right)^{\varepsilon_b}(x),\\
 &\mbox{either}\;\;b\in C(x)+F_1(x)+F_2^{\varepsilon_b}(x)\;\;\mbox{or}\;\;b\in \left(C+F_1+F_2\right)^{\varepsilon_b}(x),\\
\nonumber
%\label{eq:app.sol.acff3} 
&\gamma\norm{a+b}=\norm{x-y}\leq \rho,\;\; \varepsilon_a+\varepsilon_b\leq \epsilon,
%\end{aligned}
\end{align} 
where $\gamma>0$. Note that (i) for $\rho=\epsilon=0$, the above conditions 
%\eqref{eq:app.sol.acff}--\eqref{eq:app.sol.acff3} 
imply that $z^*:=x=y$ is a solution 
of the monotone inclusion \eqref{eq:drt}; (ii) the second inclusion in \eqref{eq:app.sol.acff2}, which will appear in the ergodic 
iteration complexity, is potentially weaker than the first one (see Proposition \ref{pr:teps}(b)), which will appear in the 
corresponding pointwise iteration complexity of
the proposed method.

We also mention that problem \eqref{eq:drt} falls in the framework of the monotone inclusion \eqref{eq:mip} due to the facts that, in view of assumptions  (E1), (E2) and (E3), 
the operator $A$ is maximal monotone, and the operator $F_1+F_2$ is monotone and $(L+1/\eta)$--Lipschitz continuous
on the closed convex set $\Omega\supset D(C)$, which combined with the assumption on the operator $C$ in (E1) and with \cite[Proposition A.1]{MonSva10-1}
implies that the operator $B$ defined in \eqref{eq:def.bcff} is maximal monotone as well. These facts combined with assumption (E5) give 
that 
conditions (D1) and (D2) of Section \ref{sec:dr} hold for $A$ and $B$ as
in (E1) and \eqref{eq:def.bcff}, respectively. In particular, it gives that Algorithm \ref{inexact.dr} may be applied 
to solve the four-operator monotone inclusion  \eqref{eq:drt}. 

In this regard, we emphasize that any implementation of 
Algorithm \ref{inexact.dr} will heavily depend
on specific strategies for solving each subproblem in \eqref{eq:err.b}, since $(y_k,a_k)$ required in \eqref{eq:err.a}
can be computed by using the resolvent operator of $A$, available in closed form in many important cases.   
In the next subsection, we show how the specific structure \eqref{eq:drt} allows for an application of a Tseng's F-B splitting
type method for solving each subproblem in \eqref{eq:err.b}.

\subsection{Solving the subproblems in \eqref{eq:err.b} for $B$ as in \eqref{eq:def.bcff}}
 \label{subsec:solsub}
In this subsection, we present and study 
%an instance of Algorithm \ref{inexact.dr} for solving
%\eqref{eq:drt} by employing 
a Tseng's F-B splitting type method~\cite{bau.com-book,arias.davis-half,MonSva10-1,tse-mod.sjco00} 
for solving the corresponding proximal subproblem in \eqref{eq:err.b} at each (outer) iteration of Algorithm \ref{inexact.dr},
when used to solve  \eqref{eq:drt}. To begin with, first 
consider the (strongly) monotone inclusion
\begin{align}
 \label{eq:cff} 
  0\in B(z)+\dfrac{1}{\gamma}(z-\bpt{z})
\end{align}
where $B$ is as in \eqref{eq:def.bcff}, $\gamma>0$ and $\bpt{z}\in \HH$, and note that 
the task of finding 
$(x_k,b_k,\varepsilon_{b,k})$ 
satisfying \eqref{eq:err.b} is related to the task of solving \eqref{eq:cff} with $\bpt{z}:=z_{k-1}$.  

In the remaining part of this subsection,
we present and study a Tseng's F-B splitting type method for solving \eqref{eq:cff}.
As we have mentioned before, the resulting algorithm will be used as an inner procedure for solving the subproblems \eqref{eq:err.b} at each iteration
of Algorithm \ref{inexact.dr}, when applied to solve \eqref{eq:drt}.

\mgap
\mgap

\noindent
\fbox{
\addtolength{\linewidth}{-2\fboxsep}%
\addtolength{\linewidth}{-2\fboxrule}%
\begin{minipage}{\linewidth}%[h]{6.6 in}
\begin{algorithm}
\label{hff}
{\bf A Tseng's F-B splitting type method for \bf{(\ref{eq:cff})}} %for \bf{(\ref{eq:cff})}} %forward-backward-forward (FBF)
\end{algorithm}
{\bf Input:} $C, F_1,\Omega,L, F_2$ and $\eta$ as in conditions (E1)--(E5), $\bpt{z}\in \HH$, $\bpt{\tau}>0$, $\sigma\in (0,1)$ and 
$\gamma$ such that
\begin{align}
 \label{eq:bound.gamma}
 0<\gamma\leq \dfrac{4\eta \sigma^2}{1+\sqrt{1+16L^2\eta^2\sigma^2}}.
\end{align}

\begin{itemize}
\item[(0)] Set $z_0\leftarrow \bpt{z}$ and $j\leftarrow 1$.
\item [(1)] Let $z'_{j-1}\leftarrow P_{\Omega}(z_{j-1})$ and compute 
\begin{align}
\label{eq:hff}
 \begin{aligned}
  &\widetilde z_j=
	\left(\frac{\gamma}{2} C+I\right)^{-1}\left(\dfrac{\bpt{z}+z_{j-1}-\gamma(F_1+F_2)(z'_{j-1})}{2}\right),\\%\quad
	[1mm]
	&z_j=\widetilde z_j-\gamma\left(F_1(\widetilde z_j)-F_1(z'_{j-1})\right).
	\end{aligned}
\end{align} 
\item[(2)] If
 \begin{align}
  \label{eq:hff2}
    \hspace{-2.4cm}\norm{z_{j-1}-z_j}^2+\frac{\gamma\norm{z'_{j-1}-\widetilde z_j}^2}{2\eta}\leq \bpt{\tau},
 \end{align}
then {\bf terminate}. Otherwise, set $j\leftarrow j+1$ and go to step 1.
   \end{itemize}
{\bf Output:} $(z_{j-1},z'_{j-1},z_j,\widetilde z_j)$. 
\noindent
 %{\bf end}
\end{minipage}
} %from fbox
\mgap
\mgap

\noindent
{\bf Remark.}
\begin{enumerate}
 \item []
Algorithm \ref{hff} combines ideas from the standard Tseng's F-B splitting algorithm~\cite{tse-mod.sjco00} as well as
from recent insights on the convergence and iteration complexity of some variants the latter 
method~\cite{alv.mon.sva-reg.siam16,arias.davis-half,MonSva10-1}. In this regard, evaluating 
the cocoercive component $F_2$ just once per iteration (see \cite[Theorem 1]{arias.davis-half}) is
potentially important in many applications, where the evaluation of
cocoercive operators is in general computationally expensive (see \cite{arias.davis-half} for a discussion).
Nevertheless, we emphasize that the results obtained in this paper regarding the analysis of Algorithm \ref{hff} do not
follow from any of the just mentioned references.
\end{enumerate}

\noindent
Next corollary ensures that Algorithm \ref{hff} always terminates with the desired output.

\begin{corollary}
 \label{cor:cota.linear}
 Assume that $(1-\sigma^2)^{-1}=\mathcal{O}(1)$ and let $d_{\bpt{z},b}$ denote
 the distance of $\bpt{z}$ to $B^{-1}(0)\neq \emptyset$. Then, \emph{Algorithm \ref{hff}} 
 terminates with the desired output after performing no more than
\begin{align}
  \label{eq:bound.alg4}
  \mathcal{O}\left(1+\log^+\left(\dfrac{d_{\bpt{z},\,b}}{\sqrt{\bpt{\tau}}}\right)\right)
 \end{align}
iterations.
\end{corollary}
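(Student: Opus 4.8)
The plan is to recognize Algorithm \ref{hff} as a concrete realization of Algorithm \ref{shpe} (the specialized HPE method for strongly monotone inclusions) applied to \eqref{eq:cff}, which is an instance of \eqref{eq:bmu} with $S(\cdot) := (1/\gamma)(\cdot - \bpt{z})$ and the same $B := C + F_1 + F_2$. Since $S$ is $(1/\gamma)$-strongly monotone, Proposition \ref{pr:3m} will provide linear decay of the relevant residual quantities. First I would verify that each pass through Step (1) of Algorithm \ref{hff} produces a triple $(\widetilde z_j, v_j, \varepsilon_j)$ together with $\lambda_j := 1$ (or some fixed stepsize bounded away from zero) satisfying the HPE-type relative error inequality in \eqref{eq:shpe}; here $v_j$ is built from $F_1(\widetilde z_j)$, $F_2(z'_{j-1})$, the resolvent residual of $C$, and the strongly monotone part, and $\varepsilon_j$ absorbs the cocoercivity slack of $F_2$ at the projected point $z'_{j-1}$ versus $\widetilde z_j$. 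The bound \eqref{eq:bound.gamma} on $\gamma$ is exactly what is needed to make the error inequality hold with the prescribed $\sigma$, using cocoercivity \eqref{eq:f.coco} and Lipschitz continuity \eqref{eq:f.Lip}.

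Granting that identification, the quantities controlling termination in \eqref{eq:hff2}, namely $\norm{z_{j-1}-z_j}^2 + \gamma\norm{z'_{j-1}-\widetilde z_j}^2/(2\eta)$, should be shown to dominate (up to an absolute constant) the squared HPE residual $\norm{\lambda_j v_j + \widetilde z_j - z_{j-1}}^2 + 2\lambda_j\varepsilon_j$, so that an upper bound on the latter forces the stopping test. By Proposition \ref{pr:3m} (applied with $\underline{\lambda}$ equal to the fixed stepsize, $\mu = 1/\gamma$, and $d_0$ the distance from $\bpt{z} = z_0$ to the solution set of \eqref{eq:cff}), both $\norm{v_j}$ and $\varepsilon_j$ decay geometrically like $(1-\alpha)^{j}$ times a factor depending on $d_0^2$. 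Hence the stopping criterion is met once $(1-\alpha)^{j}\, \cdot (\text{const}\cdot d_0^2) \le \bpt\tau$, i.e. after $\mathcal{O}(1 + \log^+(d_0/\sqrt{\bpt\tau}))$ iterations, since $\alpha$ is an absolute constant under the assumption $(1-\sigma^2)^{-1} = \mathcal{O}(1)$.

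The final step is to replace $d_0$ (distance of $\bpt z$ to $\mathrm{zer}(B + \frac1\gamma(\cdot-\bpt z))$) by $d_{\bpt z, b}$ (distance of $\bpt z$ to $B^{-1}(0)$). This uses firm nonexpansiveness of the resolvent $J_{\gamma B}$: the unique solution $z^\ast$ of \eqref{eq:cff} is $J_{\gamma B}(\bpt z)$, and if $\hat z \in B^{-1}(0)$ then $\hat z = J_{\gamma B}(\hat z)$, so $\norm{\bpt z - z^\ast} = \norm{J_{\gamma B}(\bpt z)\text{-residual}} \le \norm{\bpt z - \hat z}$; taking the infimum over $\hat z$ gives $d_0 \le d_{\bpt z, b}$. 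Plugging this in yields the stated bound \eqref{eq:bound.alg4}.

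The main obstacle I expect is the bookkeeping in the first step: writing the iterate $\widetilde z_j$ from \eqref{eq:hff} as arising from one step of Algorithm \ref{shpe} requires carefully choosing $v_j \in S(\widetilde z_j) + B^{\varepsilon_j}(\widetilde z_j)$ and the error term $\varepsilon_j$, and then verifying the relative-error inequality \eqref{eq:shpe} — this is where the specific constant in the upper bound \eqref{eq:bound.gamma} on $\gamma$ must be used, combining the cocoercivity of $F_2$ (to bound the $F_2$ discrepancy by $\gamma\norm{z'_{j-1}-\widetilde z_j}^2/(2\eta)$-type terms inside $\varepsilon_j$) with the Lipschitz bound on $F_1$ (to control $z_j - \widetilde z_j$). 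The presence of the projection $P_\Omega$ (needed because $F_1$ is only defined and Lipschitz on $\Omega \supset D(C)$, while the resolvent of $\frac\gamma2 C$ lands in $D(C) \subset \Omega$) is a technical nuisance but is handled by noting $\widetilde z_j \in D(C) \subset \Omega$ and using nonexpansiveness of $P_\Omega$ together with the fact that the true solution $z^\ast$ lies in $\Omega$. Once the HPE embedding is established, the rest is a direct application of Proposition \ref{pr:3m}.
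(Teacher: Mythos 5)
Your proposal follows essentially the same route as the paper: it embeds Algorithm \ref{hff} into the specialized HPE scheme (Algorithm \ref{shpe}) for $S(\cdot)=(1/\gamma)(\cdot-\bpt{z})$ with $v_j=(z_{j-1}-z_j)/\gamma$ and $\varepsilon_j=\norm{z'_{j-1}-\widetilde z_j}^2/(4\eta)$, applies Proposition \ref{pr:3m} to get geometric decay of the stopping quantity $\norm{\gamma v_j}^2+2\gamma\varepsilon_j$, and passes from the distance to $(\gamma B+I)^{-1}(\bpt z)$ to $d_{\bpt z,b}$ via (firm) nonexpansiveness of $J_{\gamma B}$, which is exactly the content of Lemma \ref{lm:rb}. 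The only small corrections are that the stepsize forced by the update $z_j=z_{j-1}-\gamma v_j$ is $\lambda_j\equiv\gamma$ (not $1$), and the stopping test is handled by noting it \emph{equals} $\norm{\gamma v_j}^2+2\gamma\varepsilon_j$ rather than by any domination of the HPE residual.
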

\begin{proof}
 See Subsection \ref{sec:proof.drt}.
\end{proof}

\subsection{A Douglas-Rachford-Tseng's F-B four-operator splitting method}
 \label{subsec:drt}

In this subsection, we present and study the iteration complexity of 
the main algorithm in this work, for solving \eqref{eq:drt}, namely Algorithm \ref{drt}, which combines Algorithm \ref{inexact.dr}, 
used as an outer iteration, and Algorithm \ref{hff}, used as an inner iteration, for solving the 
corresponding subproblem in \eqref{eq:err.b}. 
%Due to the results on Algorithm \ref{hff} obtained in the previous subsection,
Algorithm \ref{drt} will be shown to be a special instance of Algorithm \ref{inexact.dr}, for which pointwise and ergodic iteration complexity results are available in Section \ref{sec:dr}. Corollary \ref{cor:cota.linear} will be specially important to compute a bound on the total number
of inner iterations performed by Algorithm \ref{drt} to achieve prescribed tolerances.

\mgap
\mgap

\noindent
\fbox{
\addtolength{\linewidth}{-2\fboxsep}%
\addtolength{\linewidth}{-2\fboxrule}%
\begin{minipage}{\linewidth}%[h]{6.6 in}
\begin{algorithm}
\label{drt}
{\bf A Douglas-Rachford-Tseng's F-B splitting type method for \bf{(\ref{eq:drt})}}
\end{algorithm}
\begin{itemize}
\item[(0)] Let $z_0\in \HH$, $\tau_0>0$ and $0<\sigma,\theta<1$ be given, let 
$C,F_1,\Omega, L, F_2$ and $\eta$ as in conditions
(E1)--(E5) and $\gamma$ satisfying condition \eqref{eq:bound.gamma}, and set $k\leftarrow 1$.
\item [(1)] Call Algorithm \ref{hff} with inputs  
$C,F_1,\Omega, L, F_2$ and $\eta$, $(\bpt{z},\bpt{\tau}):=(z_{k-1},\tau_{k-1})$, $\sigma$ and
$\gamma$
to obtain as output $(z_{j-1},z'_{j-1},z_j,\widetilde z_j)$, and set 
\begin{align}
 \label{eq:err.b2}
 x_k=\widetilde z_j,\qquad b_k=\dfrac{z_{k-1}+z_{j-1}-(z_j+\widetilde z_j)}{\gamma},\qquad
\varepsilon_{b,\,k}=\frac{\norm{z'_{j-1}-\widetilde z_j}^2}{4\eta}.
\end{align}

\item[(2)]  Compute $(y_k,a_k)\in \HH\times \HH$ such that
 \begin{align}
 \label{eq:err.a2}
 \hspace{-3.0cm} a_k\in A(y_k),\quad \gamma a_k+y_k=x_k-\gamma b_k.
\end{align}
\item[(3)]  (3.a) If 
 \begin{align}
 \label{eq:cond.err2}
 \hspace{-1.3cm}\norm{\gamma b_k+x_k-z_{k-1}}^2+2\gamma \varepsilon_{b,k}\leq \sigma^2\norm{\gamma b_k+y_k-z_{k-1}}^2,
\end{align}
\hspace{0.9cm}then
\begin{align}
 \label{eq:ext.step2}
% \hspace{-2.2cm} 
z_k=z_{k-1}-\gamma(a_k+b_k),\quad \tau_k=\tau_{k-1}\qquad \mbox{[extragradient step]}.
\end{align}
(3.b) Else
\begin{align}
 \label{eq:null.step2}
 \hspace{-3.6cm} z_k=z_{k-1},\quad \tau_k=\theta\,\tau_{k-1}\qquad \mbox{[null step]}.
\end{align}
\item[(4)]
	Set $k\leftarrow k+1$ and go to step 1.
   \end{itemize}
\noindent
 %{\bf end}
\end{minipage}
} %from fbox
\mgap
\mgap

In what follows we present the pointwise and ergodic iteration complexities of
Algorithm \ref{drt} for solving the four-operator monotone inclusion problem
\eqref{eq:drt}. The results will follow essentially from the corresponding ones
for Algorithm \ref{inexact.dr} previously obtained in Section \ref{sec:dr}. On the other hand,
bounds on the number of inner iterations executed before achieving prescribed tolerances
will be proved by using Corollary \ref{cor:cota.linear}.

We start by showing that Algorithm \ref{drt} is a special instance of Algorithm \ref{inexact.dr}.

\begin{proposition}
 \label{pr:5eh4}
  The triple $(x_k,b_k,\varepsilon_{b,\,k})$ in \eqref{eq:err.b2} satisfies
	condition \eqref{eq:err.b} in \emph{Step 1} of \emph{Algorithm \ref{inexact.dr}}, i.e., 
	\begin{align}
	 \label{eq:err.b3}
	 b_k\in C(x_k)+F_1(x_k)+F_2^{\varepsilon_{b,k}}(x_k)\subset B^{\varepsilon_{b,\,k}}(x_k),\quad \norm{\gamma b_k+x_k-z_{k-1}}^2+2\gamma\varepsilon_{b,\,k}\leq \tau_{k-1},
	\end{align}
	where $B$ is as in \eqref{eq:def.bcff}. As a consequence, \emph{Algorithm \ref{drt}} is a special instance of \emph{Algorithm \ref{inexact.dr}} for solving \eqref{eq:mip} with $B$ as in \eqref{eq:def.bcff}.
\end{proposition}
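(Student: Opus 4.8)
The plan is to verify directly that the triple $(x_k,b_k,\varepsilon_{b,k})$ generated in \eqref{eq:err.b2} satisfies the two requirements collected in \eqref{eq:err.b3}, namely the refined inclusion $b_k\in C(x_k)+F_1(x_k)+F_2^{\varepsilon_{b,k}}(x_k)\subset B^{\varepsilon_{b,k}}(x_k)$ and the inequality $\norm{\gamma b_k+x_k-z_{k-1}}^2+2\gamma\varepsilon_{b,k}\le\tau_{k-1}$. Once this is established, the last assertion follows at once: Steps~2--4 of Algorithm~\ref{drt} (i.e.\ \eqref{eq:err.a2}--\eqref{eq:null.step2}) are literally Steps~2--4 of Algorithm~\ref{inexact.dr} with $B$ given by \eqref{eq:def.bcff}, so each iteration of Algorithm~\ref{drt} is an iteration of Algorithm~\ref{inexact.dr} in which the Step~1 triple is produced by a single call to Algorithm~\ref{hff}.

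I would first dispose of the inequality, which is the easy part. Using $\bpt z=z_{k-1}$ together with the definitions of $x_k$ and $b_k$ in \eqref{eq:err.b2}, a one-line computation gives $\gamma b_k+x_k-z_{k-1}=z_{j-1}-z_j$, while the definition of $\varepsilon_{b,k}$ in \eqref{eq:err.b2} gives $2\gamma\varepsilon_{b,k}=\gamma\norm{z'_{j-1}-\widetilde z_j}^2/(2\eta)$. Hence $\norm{\gamma b_k+x_k-z_{k-1}}^2+2\gamma\varepsilon_{b,k}$ equals exactly the left-hand side of the stopping test \eqref{eq:hff2} of Algorithm~\ref{hff} run with $\bpt\tau=\tau_{k-1}$; since that algorithm terminates (Corollary~\ref{cor:cota.linear}), this quantity is $\le\tau_{k-1}$.

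For the inclusion I would unpack the resolvent in \eqref{eq:hff}: from the defining relation for $\widetilde z_j$ it follows that the point $c_k:=\tfrac1\gamma\big(\bpt z+z_{j-1}-2\widetilde z_j\big)-(F_1+F_2)(z'_{j-1})$ belongs to $C(\widetilde z_j)=C(x_k)$. Substituting the second relation of \eqref{eq:hff}, namely $z_j=\widetilde z_j-\gamma(F_1(\widetilde z_j)-F_1(z'_{j-1}))$, into the formula for $b_k$ in \eqref{eq:err.b2} and using $\bpt z=z_{k-1}$, a short manipulation yields the identity $b_k=c_k+F_1(\widetilde z_j)+F_2(z'_{j-1})=c_k+F_1(x_k)+F_2(z'_{j-1})$. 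It then remains to prove $F_2(z'_{j-1})\in F_2^{\varepsilon_{b,k}}(x_k)$, which is where the $\eta$-cocoercivity \eqref{eq:f.coco} enters: for arbitrary $z'\in\HH$, decomposing $x_k-z'=(z'_{j-1}-z')+(x_k-z'_{j-1})$ and combining \eqref{eq:f.coco} with Cauchy--Schwarz and the elementary bound $\eta t^2-a t\ge -a^2/(4\eta)$ (for $a,t\ge 0$) gives $\inner{x_k-z'}{F_2(z'_{j-1})-F_2(z')}\ge -\norm{x_k-z'_{j-1}}^2/(4\eta)=-\varepsilon_{b,k}$, which is the claim by \eqref{eq:def.teps}. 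Therefore $b_k\in C(x_k)+F_1(x_k)+F_2^{\varepsilon_{b,k}}(x_k)$, and the containment of this set in $B^{\varepsilon_{b,k}}(x_k)$ follows from Proposition~\ref{pr:teps}(b)--(d), noting that $x_k=\widetilde z_j\in D(C)\subset\Omega$ and that every $z'$ relevant to the enlargement lies in $D(C)\subset\Omega$, so $C$ and $F_1$ are monotone at the points involved.

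The main obstacle I anticipate is purely bookkeeping: getting the algebraic identity $b_k=c_k+F_1(x_k)+F_2(z'_{j-1})$ exactly right, since one must keep track of the factors $\tfrac12$ appearing in the resolvent argument in \eqref{eq:hff} and of the forward-backward correction term $-\gamma(F_1(\widetilde z_j)-F_1(z'_{j-1}))$ hidden inside $z_j$. One must also be slightly careful that $F_1$ is only assumed monotone and Lipschitz on $\Omega$, which is why it matters that both $\widetilde z_j$ and $z'_{j-1}$ belong to $\Omega$. The remaining steps are routine.
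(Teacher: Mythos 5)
Your proof is correct and follows essentially the same route as the paper: the paper gets the inclusion by writing $b_k = v_j - \tfrac{1}{\gamma}(\widetilde z_j - z_{k-1})$ and invoking Proposition \ref{pr:hff} (whose proof unpacks the resolvent in \eqref{eq:hff} and handles the cocoercive term via Lemma \ref{lm:coco}), and gets the inequality exactly as you do, by identifying $\norm{\gamma b_k+x_k-z_{k-1}}^2+2\gamma\varepsilon_{b,k}$ with the left-hand side of the stopping test \eqref{eq:hff2} run with $\bpt{\tau}=\tau_{k-1}$. The only difference is organizational: you rederive inline the resolvent identity and the cocoercivity enlargement bound that the paper delegates to Proposition \ref{pr:hff} and Lemma \ref{lm:coco}.
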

\begin{proof}
Using the first identity in \eqref{eq:851}, the definition of $b_k$ in \eqref{eq:err.b2} 
as well as the fact that $\bpt{z}:=z_{k-1}$ in Step 1 of Algorithm \ref{drt}
we find 
%$b_k=v_j-(1/\gamma)(\widetilde z_j-\bpt{z}).$
%
\begin{align}
 \label{eq:two.i}
 b_k=v_j-\dfrac{1}{\gamma}(\widetilde z_j-z_{k-1})=v_j-\dfrac{1}{\gamma}(\widetilde z_j-\bpt{z}).
\end{align}
Combining the latter identity with the second inclusion in \eqref{eq:pr:hff}, the second identity in \eqref{eq:851} and the definitions
of $x_k$ and $\varepsilon_{b,\,k}$ in \eqref{eq:err.b2} we obtain the first inclusion in \eqref{eq:err.b3}. The second desired inclusion
follows from \eqref{eq:def.bcff} and Proposition \ref{pr:teps}(b).
To finish the proof of \eqref{eq:err.b3}, note that from the first identity in \eqref{eq:two.i}, the definitions
of $x_k$ and $\varepsilon_{b,\,k}$ in \eqref{eq:err.b2}, the definition of $v_j$ in \eqref{eq:851}
and \eqref{eq:hff2} we have
\begin{align}
 \norm{\gamma b_k+x_k-z_{k-1}}^2+2\gamma\varepsilon_{b,\,k}
 =\norm{z_{j-1}-z_j}^2+\frac{\gamma\norm{z'_{j-1}-\widetilde z_j}^2}{2\eta}\leq \bpt{\tau}=\tau_{k-1},
\end{align}
which gives the inequality in \eqref{eq:err.b3}. The last statement of the proposition follows
from \eqref{eq:err.b3}, \eqref{eq:err.b}--\eqref{eq:null.step} and \eqref{eq:err.a2}--\eqref{eq:null.step2}.
\end{proof}

\begin{theorem}\emph{{\bf (pointwise iteration complexity of Algorithm \ref{drt})}}
 \label{th:general}
Let the operator $B$ and $d_{0,\gamma}$ be as
in \eqref{eq:def.bcff} and \eqref{eq:def.d0}, respectively, and
assume that $\max\{(1-\sigma)^{-1},\sigma^{-1}\}=\mathcal{O}(1)$. Let also $d_{0,b}$ be the distance
of $z_0$ to $B^{-1}(0)\neq\emptyset$. Then, for given tolerances $\rho,\epsilon>0$, the following
hold:
 \begin{itemize}

  \item[\emph{(a)}] \emph{Algorithm \ref{drt}} finds $a,b,x,y\in \HH$ and $\varepsilon_b\geq 0$ such that
	\begin{align}
	 \label{eq:opt.alg5}
	a\in A(y),\;b\in C(x)+F_1(x)+F_2^{\varepsilon_b}(x),\quad \gamma\norm{a+b}=\norm{x-y}\leq \rho,\; \varepsilon_b\leq \epsilon	 
	\end{align}	
  after performing no more than
	\begin{align}
 \label{eq:main.09}
   k_{\emph{p;\,outer}}:=\mathcal{O}\left(1+\max\left\{\dfrac{d_{0,\gamma}^{\,2}}{\rho^2},
	  \dfrac{d_{0,\gamma}^{\,2}}{\gamma\epsilon}\right\}
		+
\max\left\{\log^+\left(\dfrac{\sqrt{\tau_0}}{\rho}\right),\log^+\left(\dfrac{\tau_0}{\gamma \epsilon}
\right)\right\}\right)
\end{align}
outer iterations.
	\item[\emph{(b)}] Before achieving the desired tolerance $\rho,\epsilon>0$, each iteration of \emph{Algorithm \ref{drt}} performs at most
	 \begin{align}
	  \label{eq:inner.iter}
		 k_{\emph{inner}}:=\mathcal{O}\left(1+ \log^+\left(\dfrac{d_{0,\gamma}+d_{0,b}}{\sqrt{\tau_0}}\right)
		+
		\max\left\{\log^+\left(\dfrac{\sqrt{\tau_0}}{\rho}\right),\log^+\left(\dfrac{\tau_0}{\gamma \epsilon}\right)\right\}
		\right)
		%
		%k_{\emph{inner}}:=\mathcal{O}\left(1+ 
		%\max\left\{
		%\log^+\left(\dfrac{\sqrt{\tau_0}}{\gamma \rho}\right)
		%+\log^+\left(\dfrac{d_{0,\gamma}+d_{0,b}}{\sqrt{\tau_0}}\right)
		%,
		%\log^+\left(\dfrac{\tau_0}{\gamma \epsilon}\right)
		%+\log^+\left(\dfrac{d_{0,\gamma}+d_{0,b}}{\sqrt{\tau_0}}\right)
		%\right\}
		%\right)
	 \end{align}
	inner iterations; and hence evaluations of the $\eta$--cocoercive operator $F_2$.
 \end{itemize}
As a consequence, \emph{Algorithm \ref{drt}} finds $a,b,x,y\in \HH$ and $\varepsilon_b\geq 0$ satisfying
\eqref{eq:opt.alg5} after performing no more than $k_{\emph{p;\,outer}}\times  k_{\emph{inner}}$ inner iterations.
\end{theorem}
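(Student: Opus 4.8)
The plan is to assemble Theorem \ref{th:general} from three ingredients already in hand: Proposition \ref{pr:5eh4}, which certifies that Algorithm \ref{drt} is literally a run of Algorithm \ref{inexact.dr} with $B=C+F_1+F_2$; the pointwise complexity of Algorithm \ref{inexact.dr} (Theorem \ref{th:idr.main}); and the per-call inner complexity of Algorithm \ref{hff} (Corollary \ref{cor:cota.linear}). Part (a) is then almost immediate. Since Algorithm \ref{drt} is an instance of Algorithm \ref{inexact.dr}, Theorem \ref{th:idr.main} already gives that after at most the number of (outer) iterations displayed in \eqref{eq:main.01} — which is exactly $k_{\mathrm{p;\,outer}}$ in \eqref{eq:main.09} — one obtains $a,b,x,y$ and $\varepsilon_b\ge 0$ with $a\in A(y)$, $\gamma\norm{a+b}=\norm{x-y}\le\rho$ and $\varepsilon_b\le\epsilon$. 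The only thing to add is the \emph{refined} inclusion $b\in C(x)+F_1(x)+F_2^{\varepsilon_b}(x)$ in \eqref{eq:opt.alg5}, which is stronger than the bare $b\in B^{\varepsilon_b}(x)$ supplied by Theorem \ref{th:idr.main}; but this is precisely the first inclusion in \eqref{eq:err.b3} of Proposition \ref{pr:5eh4}, so it holds at every outer iteration, in particular at the terminating one.

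For part (b) the plan is to bound the number of inner iterations spent in a single call to Algorithm \ref{hff}. Corollary \ref{cor:cota.linear} says that the call with input $(\bpt z,\bpt\tau)=(z_{k-1},\tau_{k-1})$ terminates in $\mathcal{O}\!\left(1+\log^+(d_{z_{k-1},b}/\sqrt{\tau_{k-1}})\right)$ iterations, where $d_{z_{k-1},b}=\dist(z_{k-1},B^{-1}(0))$. So I need upper bounds on the numerator $d_{z_{k-1},b}$ and lower bounds on the denominator $\sqrt{\tau_{k-1}}$, valid for all outer indices $k$ reached before termination. For the numerator: by the triangle inequality $d_{z_{k-1},b}\le \norm{z_{k-1}-z_0}+d_{0,b}$, and Proposition \ref{pr:imp.fact2} gives $\norm{z_{k-1}-z_0}\le 2d_{0,\gamma}$, hence $d_{z_{k-1},b}\le 2d_{0,\gamma}+d_{0,b}=\mathcal{O}(d_{0,\gamma}+d_{0,b})$. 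For the denominator: by \eqref{eq:tau.beta}, $\tau_{k-1}=\theta^{\beta_{k-1}}\tau_0$, and $\beta_{k-1}$ is the number of null steps so far, which by \eqref{eq:null.steps.alg3} (the null-step count from Theorem \ref{th:idr.main}) never exceeds $\mathcal{O}\!\left(1+\max\{\log^+(\sqrt{\tau_0}/\rho),\log^+(\tau_0/(\gamma\epsilon))\}\right)$ before the stopping tolerance is met; therefore $\log^+(1/\sqrt{\tau_{k-1}})=\frac12\beta_{k-1}\log(1/\theta)+\log^+(1/\sqrt{\tau_0})$ is of the same order (recall $\theta\in(0,1)$ is a fixed constant, so $\log(1/\theta)=\mathcal{O}(1)$). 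Feeding both bounds into the Corollary and using $\log^+(uv)\le\log^+ u+\log^+ v$ yields the stated per-iteration inner bound \eqref{eq:inner.iter}.

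The final ``as a consequence'' sentence then follows by multiplying: the total number of inner iterations is at most (number of outer iterations) $\times$ (worst-case inner iterations per outer call) $=k_{\mathrm{p;\,outer}}\cdot k_{\mathrm{inner}}$, which is the claimed bound, and the output $a,b,x,y,\varepsilon_b$ satisfies \eqref{eq:opt.alg5} by part (a).

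\textbf{Main obstacle.} The only genuinely non-cosmetic point is making the null-step count in part (b) rigorous: $\beta_{k-1}$ must be controlled \emph{uniformly over all outer iterations executed before the tolerance is reached}, not merely at the terminating iteration. This is fine because the null-step bound in \eqref{eq:null.steps.alg3} is monotone — once enough null steps have occurred, $\tau_k$ is small enough that the associated approximate-solution quantities in Proposition \ref{pr:comp.b} already meet $\rho,\epsilon$, so no further null steps are ``needed'' — but one must phrase this as: before the algorithm would stop, $\beta_{k-1}$ stays below the \eqref{eq:null.steps.alg3} threshold, hence $\tau_{k-1}$ stays above the corresponding floor. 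A secondary subtlety is that Corollary \ref{cor:cota.linear} requires $B^{-1}(0)\neq\emptyset$ and $(1-\sigma^2)^{-1}=\mathcal{O}(1)$; the former is assumption (E4) and the latter follows from the standing hypothesis $\max\{(1-\sigma)^{-1},\sigma^{-1}\}=\mathcal{O}(1)$, so both are available. Everything else is bookkeeping with $\log^+$.
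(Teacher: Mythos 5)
Your proposal is correct and follows essentially the same route as the paper: part (a) via Proposition \ref{pr:5eh4}, Theorem \ref{th:idr.main} and the refined inclusion \eqref{eq:err.b3}, and part (b) by feeding the bounds $d_{z_{k-1},b}\le 2d_{0,\gamma}+d_{0,b}$ and $\tau_{k-1}=\theta^{\beta_{k-1}}\tau_0$ (with $\beta_{k-1}$ controlled by the null-step count of Theorem \ref{th:idr.main}) into Corollary \ref{cor:cota.linear}, then multiplying the outer and inner bounds. Your direct appeal to Proposition \ref{pr:imp.fact2} is just a slightly more compact citation of what the paper derives from Propositions \ref{pr:5eh4}, \ref{pr:eHPE} and \ref{pr:imp.fact}, and your remark on bounding $\beta_{k-1}$ uniformly before termination is exactly the (implicit) reading the paper intends.
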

\begin{proof}
(a) The desired result is a direct consequence of the last statements in Proposition \ref{pr:5eh4} and Theorem \ref{th:idr.main}, and 
the inclusions in \eqref{eq:err.b3}.

(b) Using Step 1's definition and Corollary \ref{cor:cota.linear} we conclude that, at each iteration $k\geq 1$ of Algorithm \ref{drt}, 
the number of inner iterations 
%to find the triple $(x_k,b_k,\varepsilon_{b,k})$ in \eqref{eq:err.b2} 
is bounded by 
\begin{align}
  \label{eq:bound.alg42}
  \mathcal{O}\left(1+\log^+\left(\dfrac{d_{z_{k-1},\,b}}{\sqrt{\tau_{k-1}}}\right)\right)
 \end{align}
where $d_{z_{k-1},\,b}$ denotes the distance of $z_{k-1}$ to $B^{-1}(0)$. Now, using the last statements in
Propositions \ref{pr:5eh4} and \ref{pr:eHPE}, Proposition \ref{pr:imp.fact} and a simple argument
based on the triangle inequality we obtain
\begin{align}
 \label{eq:bound.alg43}
 d_{z_{k-1},b}\leq 2d_{0,\gamma}+d_{0,b}\qquad \forall k\geq 1.
\end{align}
%
%\begin{align}
% d_{z_{k-1},b}&\leq \norm{z_{k-1}-z_b^*}\\
%              &\leq \norm{z_{k-1}-z_0}+\norm{z_{0}-z_b^*} take the infimum over z_b^*
%\end{align} 
By combining \eqref{eq:bound.alg42} and \eqref{eq:bound.alg43} and using \eqref{eq:tau.beta} we find that, at every iteration $k\geq 1$, the number of inner
iterations is bounded by
\begin{align}
 \mathcal{O}\left(1+\log^+\left(\dfrac{d_{0,\gamma}+d_{0,b}}{\sqrt{\theta^{\beta_{k-1}}\tau_0}}\right)\right)=
\mathcal{O}\left(1+\log^+\left(\dfrac{d_{0,\gamma}+d_{0,b}}{\sqrt{\tau_0}}\right)+\beta_{k-1}\right).
\end{align}
Using the latter bound, the last statement in Proposition \ref{pr:5eh4}, the bound on the number of null steps of Algorithm \ref{inexact.dr} given in Theorem \ref{th:idr.main}, and \eqref{eq:def.betak} we conclude that, before achieving the prescribed tolerance $\rho, \epsilon>0$,
each iteration Algorithm \ref{drt} performs at most the number of iterations given in  \eqref{eq:inner.iter}. This concludes the proof of (b). 

To finish the proof, note that the last statement of the theorem follows directly from (a) and (b).
\end{proof}

\begin{theorem}\emph{{\bf (ergodic iteration complexity of Algorithm \ref{drt})}}
 \label{th:general.e}
For given tolerances $\rho,\epsilon>0$, under the same assumptions of \emph{Theorem \ref{th:general}} the following
hold:
 \begin{itemize}

  \item[\emph{(a)}] \emph{Algorithm \ref{drt}} provides \,$a,b,x,y\in \HH$ and $\varepsilon_a,\varepsilon_b\geq 0$ such that
	\begin{align}
	 \label{eq:eopt.alg5}
	a\in A^{\varepsilon_a}(y),\;b\in \left(C+F_1+F_2\right)^{\varepsilon_b}(x),\quad \gamma\norm{a+b}=\norm{x-y}\leq \rho,\; 
	\varepsilon_a+\varepsilon_b\leq \epsilon	 
	\end{align}	
  after performing no more than
	\begin{align}
 \label{eq:emain.09}
   k_{\emph{e;\,outer}}:=\mathcal{O}\left(1+\max\left\{\dfrac{d_{0,\gamma}}{\rho},
	  \dfrac{d_{0,\gamma}^{\,2}}{\gamma\epsilon}\right\}
		+
\max\left\{\log^+\left(\dfrac{\sqrt{\tau_0}}{\rho}\right),\log^+\left(\dfrac{\tau_0}{\gamma \epsilon}
\right)\right\}\right)
\end{align}
outer iterations.
	\item[\emph{(b)}] Before achieving the desired tolerance $\rho,\epsilon>0$, each iteration of \emph{Algorithm \ref{drt}} performs at most
	 \begin{align}
	  \label{eq:einner.iter}
		 k_{\emph{inner}}:=\mathcal{O}\left(1+ \log^+\left(\dfrac{d_{0,\gamma}+d_{0,b}}{\sqrt{\tau_0}}\right)
		+
		\max\left\{\log^+\left(\dfrac{\sqrt{\tau_0}}{\rho}\right),\log^+\left(\dfrac{\tau_0}{\gamma \epsilon}\right)\right\}
		\right)
		\end{align}
	inner iterations; and hence evaluations of the $\eta$--cocoercive operator $F_2$.
 \end{itemize}
As a consequence, \emph{Algorithm \ref{drt}} provides $a,b,x,y\in \HH$ and $\varepsilon_b\geq 0$ satisfying
\eqref{eq:eopt.alg5} after performing no more than $k_{\emph{e;\,outer}}\times  k_{\emph{inner}}$ inner iterations.
\end{theorem}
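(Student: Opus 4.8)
The plan is to mirror the structure of the proof of Theorem \ref{th:general}, now invoking the \emph{ergodic} complexity of Algorithm \ref{inexact.dr} (Theorem \ref{th:idr.main.erg}) in place of the pointwise one. For part (a), I would first recall, via Proposition \ref{pr:5eh4}, that Algorithm \ref{drt} is a bona fide instance of Algorithm \ref{inexact.dr} applied to \eqref{eq:mip} with $B$ as in \eqref{eq:def.bcff}. Then Theorem \ref{th:idr.main.erg} directly yields approximate solutions $a,b,x,y\in\HH$ and $\varepsilon_a,\varepsilon_b\geq 0$ satisfying \eqref{eq:main.04} after at most $k_{\text{e;\,outer}}$ outer iterations. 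The only extra point is to upgrade the generic inclusion $b\in B^{\varepsilon_b}(x)$ to the sharper $b\in (C+F_1+F_2)^{\varepsilon_b}(x)$ appearing in \eqref{eq:eopt.alg5}: by definition \eqref{eq:def.bcff} these two coincide, so \eqref{eq:eopt.alg5} follows verbatim from \eqref{eq:main.04}. (Here one should note the ergodic inclusion is the \emph{second}, weaker alternative in \eqref{eq:app.sol.acff2}, as already remarked in the text; the first alternative, $b\in C(x)+F_1(x)+F_2^{\varepsilon_b}(x)$, is available only in the pointwise case via Proposition \ref{pr:5eh4}.)

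For part (b), I would reuse essentially the same inner-iteration accounting as in Theorem \ref{th:general}(b) without change. At each outer iteration $k$, Step 1 calls Algorithm \ref{hff} with $(\bpt z,\bpt\tau):=(z_{k-1},\tau_{k-1})$, so Corollary \ref{cor:cota.linear} bounds the number of inner iterations by $\mathcal{O}(1+\log^+(d_{z_{k-1},b}/\sqrt{\tau_{k-1}}))$. Combining Propositions \ref{pr:5eh4} and \ref{pr:eHPE} with the Fejér-type monotonicity of Proposition \ref{pr:imp.fact} (applied to $T=S_{\gamma,A,B}$) and the triangle inequality gives $d_{z_{k-1},b}\leq 2d_{0,\gamma}+d_{0,b}$ for all $k\geq 1$, exactly as in \eqref{eq:bound.alg43}. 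Using $\tau_{k-1}=\theta^{\beta_{k-1}}\tau_0$ from \eqref{eq:tau.beta} then converts the logarithm into $\mathcal{O}(1+\log^+((d_{0,\gamma}+d_{0,b})/\sqrt{\tau_0})+\beta_{k-1})$, and since the total number of null steps (hence $\beta_{k-1}$) is controlled by the null-step bound \eqref{eq:null.steps.alg3e} of Theorem \ref{th:idr.main.erg}, we arrive at the bound $k_{\text{inner}}$ in \eqref{eq:einner.iter}. Note this inner bound is literally identical to the one in Theorem \ref{th:general}(b), since the null-step count has the same order in both the pointwise and ergodic analyses; only the outer bound $k_{\text{e;\,outer}}$ differs from $k_{\text{p;\,outer}}$ (by a factor $\mathcal{O}(1/\rho)$ in the $\rho$-dependent term).

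Finally, the concluding "as a consequence" statement follows by multiplying the outer bound by the per-iteration inner bound: Algorithm \ref{drt} produces $a,b,x,y$ satisfying \eqref{eq:eopt.alg5} after at most $k_{\text{e;\,outer}}\times k_{\text{inner}}$ inner iterations, hence at most that many evaluations of $F_2$. I do not expect any genuine obstacle here — the proof is essentially a word-for-word transcription of Theorem \ref{th:general}'s proof with "Theorem \ref{th:idr.main}" replaced by "Theorem \ref{th:idr.main.erg}" and "Proposition \ref{pr:c.ihpe}(a)" replaced by "Proposition \ref{pr:c.ihpe}(b)". The only point requiring a moment's care is the bookkeeping on which of the two alternative inclusions in \eqref{eq:app.sol.acff2} one lands on; since the ergodic averaging (cf. Theorem \ref{th:tf}(a) and Proposition \ref{pr:teps}(b)) only preserves the coarser enlargement $(C+F_1+F_2)^{\varepsilon_b}$, the second alternative is the correct one, and this is already consistent with what Theorem \ref{th:idr.main.erg} delivers. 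Accordingly, the proof can simply read: "The proof follows the same outline of Theorem \ref{th:general}'s proof, now applying Theorem \ref{th:idr.main.erg} and Proposition \ref{pr:c.ihpe}(b) instead of Theorem \ref{th:idr.main} and Proposition \ref{pr:c.ihpe}(a), respectively."
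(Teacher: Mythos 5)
Your proposal is correct and follows exactly the route the paper takes: the paper's own proof is literally the one-line remark that Theorem \ref{th:general.e} follows the same outline as Theorem \ref{th:general}, replacing the pointwise ingredients (Theorem \ref{th:idr.main}, Proposition \ref{pr:c.ihpe}(a)) by their ergodic counterparts (Theorem \ref{th:idr.main.erg}, Proposition \ref{pr:c.ihpe}(b)), with the inner-iteration accounting via Proposition \ref{pr:5eh4}, Corollary \ref{cor:cota.linear}, \eqref{eq:bound.alg43} and \eqref{eq:tau.beta} unchanged. Your additional remarks --- that $b\in B^{\varepsilon_b}(x)$ coincides with $b\in(C+F_1+F_2)^{\varepsilon_b}(x)$ by \eqref{eq:def.bcff}, that the ergodic case lands on the second (weaker) alternative of \eqref{eq:app.sol.acff2}, and that the null-step bound is identical in both analyses --- are all accurate and merely make explicit what the paper leaves implicit.
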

\begin{proof}
The proof follows the same outline of Theorem \ref{th:general}'s proof.
\end{proof}

\subsection{Proof of Corollary \ref{cor:cota.linear}}
 \label{sec:proof.drt}

We start this subsection by showing that Algorithm \ref{hff} is a special instance of
Algorithm \ref{shpe} for solving the strongly monotone inclusion \eqref{eq:cff}.

\begin{proposition}
 \label{pr:hff}
 Let $\{z_j\}$, $\{z'_j\}$ and $\{\widetilde z_j\}$ be generated by \emph{Algorithm \ref{hff}} and
let the operator $B$ be as in \eqref{eq:def.bcff}. Define, 
 \begin{align}
  \label{eq:851}
  v_j:=\dfrac{z_{j-1}-z_j}{\gamma}, \qquad 
	\varepsilon_{j}:=\frac{\norm{z'_{j-1}-\widetilde z_j}^2}{4\eta},\quad \forall j\geq 1.
 \end{align}
 Then, for all $j\geq 1$,
\begin{align}
 %\begin{aligned}
 \label{eq:pr:hff}
 &v_j\in (1/\gamma)(\widetilde z_j-\bpt{z})+C(\widetilde z_j)+F_1(\widetilde z_j)+F_2^{\,\varepsilon_{j}}(\widetilde z_j)
\subset (1/\gamma)(\widetilde z_j-\bpt{z})+B^{\,\varepsilon_{j}}(\widetilde z_j),\\
 \label{eq:pr:hff.2}
 & \norm{\gamma v_j+\widetilde z_j-z_{j-1}}^2+
 2\gamma \varepsilon_{j} \leq \sigma^2\norm{\widetilde z_j-z_{j-1}}^2,\\
 \label{eq:pr:hff.3}
 & z_j=z_{j-1}-\gamma v_j. 
%\end{aligned}
\end{align}
As a consequence, \emph{Algorithm \ref{hff}} is a special instance of \emph{Algorithm \ref{shpe}} with $\lambda_j\equiv \gamma$ for solving
\eqref{eq:bmu} with $S(\cdot):=(1/\gamma)(\cdot-\bpt{z})$.
\end{proposition}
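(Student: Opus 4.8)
The goal is to verify the three displays \eqref{eq:pr:hff}--\eqref{eq:pr:hff.3} and then read off that Algorithm \ref{hff} is an instance of Algorithm \ref{shpe} with $S(\cdot)=(1/\gamma)(\cdot-\bpt z)$, $B$ as in \eqref{eq:def.bcff}, and constant stepsize $\lambda_j\equiv\gamma$. The identity \eqref{eq:pr:hff.3} is immediate: it is just the definition of $v_j$ in \eqref{eq:851} rearranged, $z_j=z_{j-1}-\gamma v_j$. So the real content is the inclusion \eqref{eq:pr:hff} and the relative-error inequality \eqref{eq:pr:hff.2}, which together will match the requirement \eqref{eq:shpe} of Algorithm \ref{shpe} once we set $\widetilde z_j$, $v_j$, $\varepsilon_j$ as prescribed (note $\lambda_j v_j+\widetilde z_j-z_{j-1}=\gamma v_j+\widetilde z_j-z_{j-1}$, matching the left-hand side of \eqref{eq:pr:hff.2}, and the $2\lambda_j\varepsilon_j=2\gamma\varepsilon_j$ term likewise).

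\textbf{Step 1: the inclusion.} First I would unfold the resolvent step in \eqref{eq:hff}. By definition of $(\tfrac{\gamma}{2}C+I)^{-1}$, the point $\widetilde z_j$ satisfies
\[
\frac{\bpt z+z_{j-1}-\gamma(F_1+F_2)(z'_{j-1})}{2}\in \frac{\gamma}{2}C(\widetilde z_j)+\widetilde z_j,
\]
i.e. $\bpt z+z_{j-1}-\gamma(F_1+F_2)(z'_{j-1})-2\widetilde z_j\in\gamma C(\widetilde z_j)$. Dividing by $\gamma$ and rearranging, I would isolate an element of $C(\widetilde z_j)$ and then add $F_1(\widetilde z_j)$ and an appropriate approximate value of $F_2$ at $\widetilde z_j$. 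The key algebraic fact needed is the standard cocoercivity-to-$\varepsilon$-enlargement lemma: if $F_2$ is $\eta$-cocoercive then $F_2(z')\in F_2^{\,\varepsilon}(z)$ with $\varepsilon=\tfrac{1}{4\eta}\norm{z-z'}^2$ (this comes from expanding $\inner{z-w}{F_2(z')-F_2(w)}$ and completing the square using \eqref{eq:f.coco}); applied with $z=\widetilde z_j$, $z'=z'_{j-1}$ this produces exactly $\varepsilon_j$ in \eqref{eq:851}. Then, using $z_j=\widetilde z_j-\gamma(F_1(\widetilde z_j)-F_1(z'_{j-1}))$ to substitute for the $F_1(z'_{j-1})$ term, the pieces should assemble to $v_j\in(1/\gamma)(\widetilde z_j-\bpt z)+C(\widetilde z_j)+F_1(\widetilde z_j)+F_2^{\,\varepsilon_j}(\widetilde z_j)$. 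The second containment in \eqref{eq:pr:hff} is then immediate from \eqref{eq:def.bcff} and Proposition \ref{pr:teps}(b) (adding an exact value of $C+F_1$ to an $\varepsilon_j$-enlargement element of $F_2$ lands in $(C+F_1+F_2)^{\varepsilon_j}$).

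\textbf{Step 2: the relative-error inequality.} For \eqref{eq:pr:hff.2}, from \eqref{eq:pr:hff.3} we have $\gamma v_j+\widetilde z_j-z_{j-1}=\widetilde z_j-z_j=\gamma(F_1(\widetilde z_j)-F_1(z'_{j-1}))$, so $\norm{\gamma v_j+\widetilde z_j-z_{j-1}}^2=\gamma^2\norm{F_1(\widetilde z_j)-F_1(z'_{j-1})}^2\le\gamma^2 L^2\norm{\widetilde z_j-z'_{j-1}}^2$ by the Lipschitz bound \eqref{eq:f.Lip} (legitimate since $\widetilde z_j,z'_{j-1}\in\Omega$ — $z'_{j-1}=P_\Omega(z_{j-1})$ and $\widetilde z_j\in D(C)\subset\Omega$, a point worth stating explicitly). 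Adding $2\gamma\varepsilon_j=\tfrac{\gamma}{2\eta}\norm{z'_{j-1}-\widetilde z_j}^2$ gives a bound of the form $(\gamma^2L^2+\tfrac{\gamma}{2\eta})\norm{\widetilde z_j-z'_{j-1}}^2$. To finish I would bound $\norm{\widetilde z_j-z'_{j-1}}^2$ by $\norm{\widetilde z_j-z_{j-1}}^2$, which holds because $z'_{j-1}=P_\Omega(z_{j-1})$ is the projection and $\widetilde z_j\in\Omega$, so $\norm{\widetilde z_j-z'_{j-1}}\le\norm{\widetilde z_j-z_{j-1}}$ by the firm nonexpansiveness / obtuse-angle property of projections. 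It then remains to check that $\gamma^2L^2+\tfrac{\gamma}{2\eta}\le\sigma^2$, which is exactly equivalent — solving the quadratic $\gamma^2L^2+\tfrac{\gamma}{2\eta}-\sigma^2\le0$ in $\gamma$ — to the upper bound on $\gamma$ imposed in \eqref{eq:bound.gamma}. I would carry out that one-line quadratic-formula check to close the estimate.

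\textbf{Main obstacle.} None of the steps is deep; the work is bookkeeping. The step most likely to cause trouble is Step 1, specifically getting every constant and every halving from the "$\tfrac{\gamma}{2}C$" resolvent and the "$/2$" inside the argument to land so that the coefficient of $(\widetilde z_j-\bpt z)$ comes out to exactly $1/\gamma$ and the residual $F_2$-term has exactly the displacement $z'_{j-1}-\widetilde z_j$ feeding into $\varepsilon_j=\tfrac{1}{4\eta}\norm{z'_{j-1}-\widetilde z_j}^2$; the factor $2$ in the resolvent is clearly chosen precisely so the $\bpt z+z_{j-1}$ average splits correctly, but it must be tracked carefully. Once \eqref{eq:pr:hff}--\eqref{eq:pr:hff.3} are in hand, the final sentence is just a direct comparison with the definition of Algorithm \ref{shpe}: \eqref{eq:pr:hff} and \eqref{eq:pr:hff.2} are the two conditions in \eqref{eq:shpe} with $S=(1/\gamma)(\cdot-\bpt z)$ and $\lambda_j=\gamma$, and \eqref{eq:pr:hff.3} is the update \eqref{eq:shpe2}, so the claim follows.
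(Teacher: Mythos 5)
Your proposal is correct and follows essentially the same route as the paper's proof: unfold the resolvent in \eqref{eq:hff}, add $F_1(\widetilde z_j)$ and use the second identity in \eqref{eq:hff} to produce $v_j$, invoke the cocoercivity lemma (Lemma \ref{lm:coco}) to place $F_2(z'_{j-1})$ in $F_2^{\varepsilon_j}(\widetilde z_j)$ and Proposition \ref{pr:teps}(b) for the second inclusion, then bound $\gamma v_j+\widetilde z_j-z_{j-1}=\gamma(F_1(\widetilde z_j)-F_1(z'_{j-1}))$ via \eqref{eq:f.Lip}, the nonexpansiveness of $P_\Omega$ (with $\widetilde z_j\in D(C)\subset\Omega$), and the fact that \eqref{eq:bound.gamma} is equivalent to $\gamma^2L^2+\gamma/(2\eta)\le\sigma^2$. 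The bookkeeping you flag as the main obstacle does go through exactly as you sketch, so there is no gap.
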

\begin{proof}
 Note that the first identity in \eqref{eq:hff} gives
 \begin{align*}
  \dfrac{z_{j-1}-\widetilde z_j}{\gamma}- F_1(z'_{j-1})\in 
	(1/\gamma)(\widetilde z_j-\bpt{z})+C(\widetilde z_j)+F_2(z'_{j-1}).
 \end{align*}
 Adding $F_1(\widetilde z_j)$ in both sides of the above identity and using the second 
and first 
identities in \eqref{eq:hff} and \eqref{eq:851}, respectively, we find
 \begin{align}
  v_j=\dfrac{z_{j-1}-z_j}{\gamma}\in 
	(1/\gamma)(\widetilde z_j-\bpt{z})+C(\widetilde z_j)+F_1(\widetilde z_j)+F_2(z'_{j-1}),
 \end{align}
which, in turn, combined with Lemma \ref{lm:coco} and the definition of $\varepsilon_j$ in \eqref{eq:851} proves
the first inclusion in \eqref{eq:pr:hff}. 
Note now that the second inclusion in \eqref{eq:pr:hff} is a direct consequence
of \eqref{eq:def.bcff} and Proposition \ref{pr:teps}(b). Moreover, \eqref{eq:pr:hff.3} is a direct consequence of
the first identity in \eqref{eq:851}.

To prove \eqref{eq:pr:hff.2}, note that from \eqref{eq:851}, the second identity in \eqref{eq:hff}, \eqref{eq:bound.gamma} 
and \eqref{eq:f.Lip}
we have
\begin{align*}
 \norm{\gamma v_j+\widetilde z_j-z_{j-1}}^2+
 2\gamma \varepsilon_{j}&=\gamma^2\norm{F_1(\widetilde z_j)-F_1(z'_{j-1})}^2+
\frac{\gamma\norm{z'_{j-1}-\widetilde z_j}^2}{2\eta}\\
  &\leq \left(\gamma^2 L^2+\dfrac{\gamma}{2\eta}\right)\norm{z'_{j-1}-\widetilde z_j}^2\\
	&\leq \sigma^2 \norm{z_{j-1}-\widetilde z_j}^2,
\end{align*}
which is exactly the desired inequality, where we also used the facts that
$z'_{j-1}=P_{\Omega}(z_{j-1})$, $\widetilde z_j\in D(C)\subset \Omega$ and that
$P_\Omega$ is nonexpansive. The last statement of the proposition follows
from \eqref{eq:pr:hff}--\eqref{eq:pr:hff.3}, \eqref{eq:cff}, \eqref{eq:shpe} and \eqref{eq:shpe2}.
\end{proof}

\noindent
{\bf Proof of Corollary \ref{cor:cota.linear}.}
 Let, for all $j\geq 1$, $\{v_j\}$ and $\{\varepsilon_j\}$ be defined in \eqref{eq:851}. 
 Using the last statement in Proposition \ref{pr:hff} and Proposition \ref{pr:3m} 
with $\mu:=1/\gamma$ and $\underline{\lambda}:=\gamma$ 
we find
\begin{align}
 \label{eq:cota.linear6}
 \norm{\gamma v_j}^2+2\gamma\varepsilon_j\leq \dfrac{((1+\sigma)^2+\sigma^2)(1-\alpha)^{j-1}\norm{\bpt{z}-z_\gamma^*}^2}{1-\sigma^2},
\end{align}
where $z_\gamma^*:=(S+B)^{-1}(0)$ with $S(\cdot):=(1/\gamma)(\cdot-\bpt{z})$, i.e., $z^*_\gamma=(\gamma B+I)^{-1}(\bpt{z})$. 
Now, using \eqref{eq:cota.linear6}, \eqref{eq:851} and Lemma \ref{lm:rb} we obtain
\begin{align}
 \label{eq:cota.linear2}
  \norm{z_{j-1}-z_j}^2+\frac{\gamma\norm{z'_{j-1}-\widetilde z_j}^2}{2\eta}\leq
\dfrac{((1+\sigma)^2+\sigma^2)(1-\alpha)^{j-1}d_{\bpt{z},\,b}^{\,2}}{1-\sigma^2},
\end{align}
which in turn combined with \eqref{eq:hff2}, after some direct calculations, gives \eqref{eq:bound.alg4}.

\section{Numerical experiments}
 \label{sec:num}

In this section, we perform simple numerical experiments on the family of (convex) constrained quadratic programming
problems
 \begin{align}
  \label{eq:cpro}
 \begin{aligned}
 & \mbox{minimize}\,\, \dfrac{1}{2} \inner{Qz}{z}+\inner{e}{z}\\
 & \mbox{subject to }Kz=0, z\in X,
\end{aligned} 
\end{align}
%
%
 %\begin{align}
  %\label{eq:cpro}
 %\begin{aligned}
  % &\min\,\dfrac{1}{2}\inner{Qz}{z}\\
%	 & Kz=0,\\
%	 & z\in X,
%\end{aligned} 
%\end{align}
%
where $Q\in \R^{n\times n}$ is symmetric and either positive definite or positive semidefinite, 
$e=(1,\dots, 1)\in \R^n$, $K=(k_{j})\in \R^{1\times n}$, with $k_j\in \{-1,+1\}$ for all
$j=1,\dots, n$, and $X=[0,10]^n$ is a box in $\R^n$. Problem \eqref{eq:cpro} appears, for instance, in
support vector machine classifiers (see, e.g., \cite{cha.lin-lib.acm,dav-con.sjo15}). Here, $\inner{\cdot}{\cdot}$
denotes the usual inner product in $\R^n$.
A vector $z^*\in \R^n$ is a solution of \eqref{eq:cpro}, if and only if
it solves the MIP 
\begin{align}
  \label{eq:cpro.foc}
 0\in N_{\mathcal{M}}(z)+N_X(z)+Qz+e,
\end{align}
where $\mathcal{M}:=\mathcal{N}(K):=\{z\in \R^n\,|\,Kz=0\}$. Problem \eqref{eq:cpro.foc} is clearly a special
instance of \eqref{eq:drt}, in which
\begin{align}
  \label{eq:eman}
 A(\cdot):=N_{\mathcal{M}}(\cdot),\quad C(\cdot):=N_X(\cdot), \quad F_1(\cdot):=0\;\;\mbox{and}\;\;F_2(\cdot):=Q(\cdot)+e.
\end{align}
Moreover,  in this case, $J_{\gamma A}=P_{\mathcal{M}}$ and $J_{\gamma C}=P_X$.

In what follows, we analyze the numerical performance of the 
following three algorithms for solving the MIP \eqref{eq:cpro.foc}:
\begin{itemize}
 \item The Douglas-Rachford-Tseng's F-B splitting method (Algorithm \ref{drt} (ALGO 5)) proposed in Section \ref{sec:drt}. 
 We set $\sigma=0.99$, $\theta=0.01$, the operators $A$, $C$, $F_1$ and $F_2$ as in \eqref{eq:eman}, and $\Omega=\R^n$, $L=0$ and $\eta=1/(\sup_{\norm{z}\leq 1}\norm{Qz})$
(which clearly satisfy the conditions (E1)--(E5) of Section \ref{sec:drt}). We also have set 
$\gamma=2\eta\sigma^2$ (see \eqref{eq:bound.gamma}) and $\tau_0=\norm{z_0-P_X(z_0)+Qz_0}^3+1$.
 \item The relaxed forward-Douglas-Rachford splitting (rFDRS) from \cite[Algorithm 1]{dav-con.sjo15} 
(originally proposed in \cite{ari-for.opt15}). We set (in the notation of \cite{dav-con.sjo15}) 
$\beta_V=1/(\sup_{\norm{z}\leq 1}\norm{(P_{\mathcal{M}}\circ Q\circ P_{\mathcal{M}})z})$, $\gamma=1.99\beta_V$ and 
$\lambda_k\equiv 1$.
\item The three-operator splitting scheme (TOS) from \cite[Algorithm 1]{dav.yin-thr.svva17}. 
We set (in the notation of \cite{dav.yin-thr.svva17}) $\beta=1/(\sup_{\norm{z}\leq 1}\norm{Qz})$, $\gamma=1.99\beta$ and 
$\lambda_k\equiv 1$.
\end{itemize}
For each dimension $n\in \{100,500,1000,2000, 6000\}$, we analyzed the performance of each the above mentioned 
algorithms on a set of 100 randomly generated instances of \eqref{eq:cpro}. All the experiments were performed on a laptop equipped with an Intel i7 7500U CPU, 8 GB DDR4 RAM and a nVidia GeForce 940MX.
In order to allow performance comparison of ALGO 5, rFDRS and TOS, we adopted the stopping criterion
\begin{align}
  \label{eq:stc.impl}
 \norm{z_{k}-z_{k-1}}\leq 10^{-6},
\end{align}
\textbf{for which we considered only extragradient steps when analyzing the performance of \mbox{ALGO 5}}.
The corresponding experiments are displayed in Tables 1 ($Q$ positive definite), 2 (Table 1 continued), 3 ($Q$ positive semidefinite) and 4 (Table 3 continued).

Now note that by using \eqref{eq:err.a2} and \eqref{eq:ext.step2}, we conclude that \eqref{eq:stc.impl}
is equivalent to
\begin{align}
  \label{eq:stc.impl2}
 \gamma \norm{a_k+b_k}=\norm{x_k-y_k}\leq 10^{-6}.
\end{align}
Motivated by the above observation, we analyzed the performance of ALGO 5 on solving \eqref{eq:cpro} while using the
stopping criterion \eqref{eq:stc.impl2}, for which both extragradient and null steps are considered.  The corresponding results 
are displayed on Tables 5 and 6.

\mgap

Finally, we mention that \eqref{eq:cpro.foc} consists of a \emph{three-operator} MIP. 
For  future research, we intend to study the numerical performance of Algorithm \ref{drt} in  (true) \emph{four-operator}  MIPs. One possibility would be to consider structured minimization problems of the form
\begin{align}
  \label{eq:min4}
 \min_{x\in \HH}\,\left\{f(x)+g(x)+\varphi(\widetilde {K}x)+h(x)\right\}
\end{align}
where $f,g,\varphi:\HH\to (-\infty,+\infty]$ are proper closed convex functions, $h:\HH\to \R$ is convex and differentiable 
and $\widetilde K:\HH\to \HH$ is a bounded linear operator. Under certain qualification conditions, \eqref{eq:min4} is equivalent
to the MIP
\begin{align}
 0\in \partial f(x)+\partial g(x)+\widetilde{K}^*\partial \varphi(\widetilde{K} x)+\nabla h(x)
\end{align}
which, in turn, is clearly equivalent to
\begin{align}
  \label{eq:mip.pd}
 \begin{aligned}
 & 0\in \partial f(x)+\partial g(x)+ \widetilde{K}^*y+\nabla h(x)\\
& 0\in \partial \varphi^*(y)-\widetilde{K}x,
 \end{aligned}
\end{align}
where $\varphi^*$  denotes the Fenchel-conjugate of $\varphi$. We now note that \eqref{eq:mip.pd} is a special
instance of \eqref{eq:drt} where, by letting $z=(x,y)$,
\begin{align}
 \begin{aligned}
 &A(z):=\partial f(x)\times \partial\varphi^*(y),\quad C(z):=\partial g(x)\times \{0\},\\
 & F_1(z):=(\widetilde K^*y,-\widetilde Kx),\quad F_2(z):=(\nabla h(x),0).
 \end{aligned}
\end{align}
Hence, under mild conditions on \eqref{eq:min4} (specially regarding conditions (E1)--(E5) on Section \ref{sec:drt}), Algorithm \ref{drt} is
potentially applicable to solve \eqref{eq:mip.pd} (i.e., \eqref{eq:min4}).

We also mention that while the variational problem \eqref{eq:min4} appears in different applications in Imaging and related fields,
the primal-dual formulation \eqref{eq:mip.pd} has been widely used in 
nowadays research in designing efficient primal-dual methods
for, in particular,  solving \eqref{eq:min4} (see, e.g., \cite{bot.cse.hei-con.mp15,bot.cse.hen-rec.mwb14,cha.poc-fir.jmiv11,com-sys.jco13,com.pes-pri.svva12}).

%%%%%%%%%%%%% INICIO POSITIVA DEFINIDA %%%%%%%%%%%%%%

%\begin{center}
\begin{table}[!htpb]
	\caption{Running time (in seconds) and number of iterations performed by ALGO 5, rFDRS and TOS
to reach the stopping criterion \eqref{eq:stc.impl} on a set of
100 randomly generated instances of \eqref{eq:cpro} with the matrix $Q$ \textbf{positive definite}, with
$n\in \{100,500,1000,2000,6000\}$. We can see that either ALGO 5 or TOS outperform the rFDRS in terms of (mean) running time, while ALGO 5 shows
a slightly superior performance on large dimensions. Moreover, -- see Table 2 -- when compared to TOS,  ALGO 5 provides a much more accurate approximate solution to the (unique) solution of \eqref{eq:cpro}.
}
	\begin{center} 
		\begin{tabular}{llcccp{0.25cm}rrr}
			\hline \\
		 & \multicolumn {3}{r}{ Time } & & & \multicolumn {3}{c}{ Iterations}  \\ 
		 \cline {3-5} \cline{7-9}\\
		 
		 $n$  & Algorithm & Min & Max & Mean &  & Min & Max & Mean  \\ \hline \\
		 
		      & ALGO 5 & 0.0014   & 0.0132   & 0.0017   & & 12 & 21 & 15.21  \\
		$100$ & rFDRS  & 0.0014   & 0.0170   & 0.0018   & & 6  & 15 & {\bf 8.31}  \\
		      & TOS    & 0.0010   & 0.0097   & \textbf{0.0012}   & & 7  & 16 & 9.37  \\ \hline
		      & ALGO 5 & 0.0302   & 0.0451   & \textbf{0.0334}   & & 15 & 22 & 17.24  \\
		$500$ & rFDRS  & 0.0517   & 0.0843   & 0.0626   & & 8  & 15 & {\bf 10.26}  \\
		      & TOS    & 0.0288   & 0.0434   & 0.0340   & & 9  & 16 & 11.28  \\ \hline
		      & ALGO 5 & 0.3177   & 0.4190   & \textbf{0.3540}   & & 14 & 20 & 17.14  \\
	   $1000$ & rFDRS  & 0.4768   & 0.7304   & 0.5771   & & 9  & 14 & {\bf 10.84}  \\
		      & TOS    & 0.3103   & 0.4504   & 0.3760   & & 10 & 15 & 11.86  \\ \hline
		      & ALGO 5 & 3.6411   & 3.9397   & \textbf{3.7648}   & & 19 & 21 & 19.80  \\
	   $2000$ & rFDRS  & 5.0062   & 5.5711   & 5.2795   & & 11 & 12 & {\bf 11.70}  \\
		      & TOS    & 3.6632   & 4.0798   & 3.7703   & & 12 & 13 & 12.70  \\ \hline
		      & ALGO 5 & 94.7551  & 121.1123 &\textbf{ 101.6311} & & 18 & 20 & 18.81  \\
	   $6000$ & rFDRS  & 107.3812 & 125.9018 & 115.0631 & & 11 & 13 & {\bf12.20}  \\
		      & TOS    & 94.7152  & 123.6527 & 104.0517 & & 12 & 15 & 13.18  \\
		 \hline\\
		      
		\end{tabular}
		
	\end{center}
	
  \end{table}
%\end{center}
	
  \begin{table}[!htpb]
         \caption{Table 1 continued. Here (1) we show the number of extragradient and null steps performed by ALGO 5 while reaching the stopping criterion \eqref{eq:stc.impl}; (2) we evaluate the absolute error between the provided 
iterate $z_k$ and the unique solution $z^*$ of \eqref{eq:cpro}. We can see that, when compared to TOS, both ALGO 5 and rFDRS provide a much more accurate approximate solution.}
	\begin{center}
		\begin{tabular}{lllllp{0.1cm}p{0.5cm}p{0.5cm}p{0.5cm}p{0.1cm}rrr}
			\hline \\
			& & \multicolumn {3}{c}{Extragradient steps} & & \multicolumn {3}{c}{Null steps} & & \multicolumn {3}{c}{$ || z_{k}-z_{*} || $} \\ 
			\cline {3-5} \cline{7-9} \cline{11-13}\\
			
			$n$  & Algorithm & Min & Max & Mean &  & Min & Max & Mean &  & Min & Max & Mean \\ \hline 
	
			& ALGO 5 & 8 & 13 & 10.24 & & 3 & 4 & 3.58 & & 0.0033 & 0.7829 & {\bf 0.2714} \\
	  $100$ & rFDRS  &   &    &       & &   &   &      & & 0.0014 & 2.0768 & 0.3401 \\
			& TOS    &   &    &       & &   &   &      & & 2.8702 & 5.7959 & 4.2820 \\ \hline
			& ALGO 5 & 9 & 14 & 11.64 & & 3 & 5 & 4.30 & & 0.0009 & 1.0619 & \textbf{0.2004} \\
	  $500$ & rFDRS  &   &    &       & &   &   &      & & 0.0029 & 1.0275 & 0.3336 \\
			& TOS    &   &    &       & &   &   &      & & 5.7643 & 10.0704 & 7.8648 \\ \hline
			& ALGO 5 & 10 & 16 & 12.55 & & 4 & 5 & 4.53 & & 0.0036 & 0.5649 & \textbf{0.1798} \\
	 $1000$ & rFDRS  &    &    &       & &   &   &      & & 0.0008 & 0.9373 & 0.2763 \\
			& TOS    &    &    &       & &   &   &      & & 7.9120 & 13.3011 & 10.5356 \\ \hline
			& ALGO 5 & 11 & 19 & 13.30 & & 4 & 6 & 4.70 & & 0.0766 & 0.5499 & 0.2795 \\
	 $2000$ & rFDRS  &    &    &       & &   &   &      & & 0.1004 & 0.3433 & {\bf 0.2278} \\
			& TOS    &    &    &       & &   &   &      & & 13.3085 & 16.2610 & 14.4403\\ \hline
			& ALGO 5 & 13 & 18 & 15.20 & & 4 & 7 & 5.20 & & 0.0437 & 0.6245 & \textbf{0.2333}\\
	 $6000$ & rFDRS  &    &    &       & &   &   &      & & 0.1626 & 1.0021 & 0.4523 \\
			& TOS    &    &    &       & &   &  &       & & 19.9698 & 24.4657 & 22.9383 \\
			\hline\\
	       \end{tabular}
		\end{center}
	     \end{table}

%%%%%%%%%%%%%%%%%%% FIM POSITIVA DEFINIDA %%%%%%%%%%

%%%%%%%%%%%%% INICIO SEMIPOSITIVA DEFINIDA %%%%%%%%%%%%%%

\begin{table}[h!]
	
	\caption{Running time (in seconds) and number of iterations performed by ALGO 5, rFDRS and TOS
to reach the stopping criterion \eqref{eq:stc.impl} on a set of
100 randomly generated instances of \eqref{eq:cpro} with the matrix $Q$ \textbf{positive semidefinite}, with
$n\in \{100,500,1000,2000,6000\}$.
 Similarly to the case of $Q$ positive semidefinite, we can see that either ALGO 5 or TOS outperform the rFDRS in terms of (mean) running time, while ALGO 5 shows
a slightly superior performance on large dimensions.}
	
	\begin{center}
		 
		\begin{tabular}{llcccp{0.25cm}rrr}
			\hline \\
		 & \multicolumn {3}{r}{ Time } & & & \multicolumn {3}{c}{ Iterations}  \\ 
		 \cline {3-5} \cline{7-9}\\
		 
		 $n$  & Algorithm & Min & Max & Mean &  & Min & Max & Mean  \\ \hline \\
		 
		      & ALGO 5 & 0.0013   & 0.0130   & 0.0018  & & 11 & 20 & 15.31  \\
		$100$ & rFDRS  & 0.0013   & 0.0119   & 0.0017  & & 6  & 18 & {\bf 10.16}  \\
		      & TOS    & 0.0008   & 0.0056   & \textbf{0.0011}  & & 7  & 19 & 11.19  \\ \hline
		      & ALGO 5 & 0.0249   & 0.0409   & \textbf{0.0283}  & & 15 & 24 & 17.99  \\
		$500$ & rFDRS  & 0.0466   & 0.0895   & 0.0563  & & 9  & 18 & {\bf 12.13}  \\
		      & TOS    & 0.0236   & 0.0432   & 0.0291  & & 10 & 19 & 13.13  \\ \hline
		      & ALGO 5 & 0.3013   & 0.4061   & \textbf{0.3229}  & & 17 & 24 & 19.88 \\
	   $1000$ & rFDRS  & 0.4604   & 0.6780   & 0.5443  & & 10 & 16 & {\bf 13.18}  \\
		      & TOS    & 0.2910   & 0.4354   & 0.3348  & & 11 & 17 & 14.18  \\ \hline
		      & ALGO 5 & 3.6649   & 4.0284   & \textbf{3.7989}  & & 18 & 25 & 21.75  \\
	   $2000$ & rFDRS  & 5.0060   & 5.3364   & 5.1248  & & 13 & 15 & {\bf 14.20}  \\
		      & TOS    & 3.6933   & 4.0949   & 3.8607  & & 14 & 16 & 15.27  \\ \hline
		      & ALGO 5 & 101.0412 & 111.8105 & \textbf{107.9423} & & 20 & 23 & 21.80 \\
	   $6000$ & rFDRS  & 115.9101 & 146.0409 & 130.2121 & & 13 & 16 & {\bf 15.01}  \\
		      & TOS    & 105.1307 & 116.9615 & 110.3801 & & 14 & 17 & 16.05  \\
		 \hline
		\end{tabular}
		
	\end{center}
	
  \end{table}
  
  \begin{table}[h!]
  	
	\caption{Table 3 continued. Here we provide the number of extragradient and null steps performed by ALGO 5 while reaching the stopping criterion \eqref{eq:stc.impl}.}
	
	\begin{center}
		
		\begin{tabular}{llcccp{0.25cm}ccc}
			\hline \\
			& & \multicolumn {3}{c}{Extragradient steps} & & \multicolumn {3}{c}{Null steps}  \\ 
			\cline {3-5} \cline{7-9} \\
			
			$n$  & Algorithm & Min & Max & Mean & & Min & Max & Mean  \\ \hline \\
			
		 $100$	& ALGO 5  & 8  & 19 & 12.23 & & 3 & 5 & 3.77 \\
		 $500$	& ALGO 5  & 11 & 21 & 14.14 & & 4 & 5 & 4.31 \\
		$1000$	& ALGO 5  & 12 & 20 & 15.42 & & 4 & 5 & 4.54 \\
		$2000$	& ALGO 5  & 14 & 20 & 15.95 & & 4 & 6 & 4.70 \\
		$6000$	& ALGO 5  & 14 & 21 & 16.55 & & 4 & 7 & 5.35 \\ \hline\\
			
		\end{tabular}
		
	\end{center}
	
\end{table}

%%%%%%%%%%%%% FIM SEMIPOSITIVA DEFINIDA %%%%%%%%%%%%%%

%%%%%%%%%%%%% INICIO POSITIVA DEFINIDA C/ ERRO \|X-Y\|%%%%%%%%%%%%%%

\begin{table}[h!]
	
	\begin{center}
		
	\caption{Running time (in seconds) and number of iterations performed by ALGO 5 to reach the stopping criterion \eqref{eq:stc.impl2} on a set of 100 randomly generated instances of \eqref{eq:cpro} with the matrix $Q$ \textbf{positive definite}, with
$n\in \{100,500,1000,2000,6000\}$. We can see a slight improvement when compared to the results obtained via the
stopping criterion \eqref{eq:stc.impl} -- cf. Table 1.}
		 
		\begin{tabular}{lcccp{0.25cm}rrr}
			\hline \\
		 & \multicolumn {3}{ c }{ Time } & \multicolumn {4}{c}{ Iterations}  \\ 
		 \cline { 2-4 } \cline{6-8}\\
		 $n$   & Min & Max & Mean & & Min & Max & Mean  \\ \hline \\
		$100$ & 0.0012  & 0.0121    & 0.0016 &  & 11 & 17 & 13.66 \\
		$500$ & 0.0251  & 0.0492    & 0.0314 & & 14 & 19 & 16.10\\
	   $1000$ & 0.3000  & 0.3539    & 0.3201 & & 15 & 20 & 17.32\\
	   $2000$ & 3.5538  & 3.7583    & 3.5914 & & 16 & 20 & 17.72 \\  
	   $6000$ & 98.8411 & 102.9118  & 99.7147& & 18 & 21 & 19.20 \\ \hline\\
		      
		\end{tabular}
		
	\end{center}
	
   \end{table}
	
	\begin{table}[h!]
		
	\begin{center}
		
		 \caption{Table 5 continued. Here, we show the number of extragradient and null steps performed by ALGO 5 while reaching the stopping criterion \eqref{eq:stc.impl2} and evaluate the absolute error between the provided 
iterate $z_k$ and the unique solution $z^*$ of \eqref{eq:cpro} -- cf. Table 2. }
		
		\begin{tabular}{llccp{0.25cm}cccp{0.25cm}rrr}
			\hline \\
			
			& \multicolumn {3}{ c }{Extragradient steps} & & \multicolumn {3}{c}{Null steps} & & \multicolumn {3}{c}{Absolute Error}  \\ 
			\cline { 2-4 } \cline{6-8} \cline{10-12} \\
			$n$   & Min & Max & Mean & & Min & Max & Mean & & Min & Max & Mean \\ \hline \\
		$100$ & 8  & 16 & 10.49 &  & 3 & 4 & 3.48 & & 0.0007 & 0.6949 & 0.2354\\
		$500$ & 9  & 15 & 11.82 &  & 3 & 5 & 4.22 & & 0.0007 & 0.7109 & 0.2134\\
	   $1000$ & 10 & 16 & 12.61 &  & 4 & 5 & 4.42 & & 0.0011 & 0.6628 & 0.1989\\
	   $2000$ & 12 & 16 & 13.46 &  & 4 & 6 & 4.64 & & 0.0061 & 0.4537 & 0.1596 \\  
	   $6000$ & 13 & 17 & 14.50 &  & 4 & 6 & 4.80 & & 0.0557 & 0.3666 & 0.2236 \\ \hline
			
		\end{tabular}
		
	\end{center}
\end{table}

%%%%%%%%%%%%% FIM POSITIVA DEFINIDA C/ ERRO \|X-Y\|%%%%%%%%%%%%%%

\appendix
\section{Auxiliary results}

%The following lemma was proved in \cite{}. We present a proof for the convenience of the reader.

\begin{lemma}\emph{(\cite[Lemma 3.1]{alv.mon.sva-reg.siam16})}
 \label{lm:rb}
 Let $z_\gamma^*:=(\gamma B+I)^{-1}(\bpt{z})$ be the (unique) solution of \eqref{eq:cff}. Then,
\begin{align}
 \norm{\bpt{z}-z_\gamma^*}\leq \norm{\bpt{z}-x^*}\qquad \forall x^*\in B^{-1}(0).
\end{align}
\end{lemma}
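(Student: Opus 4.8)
The plan is to reduce the inequality to the defining property of the resolvent $J_{\gamma B}=(\gamma B+I)^{-1}$ combined with the monotonicity of $B$. First I would recall, from the characterization of resolvents stated in Subsection~\ref{sec:gn} and the equivalence noted right after \eqref{eq:cff}, that the point $z_\gamma^*=(\gamma B+I)^{-1}(\bpt{z})$ is precisely the one satisfying
\[
\gamma^{-1}(\bpt{z}-z_\gamma^*)\in B(z_\gamma^*),
\]
while every $x^*\in B^{-1}(0)$ satisfies $0\in B(x^*)$ by definition.

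Next I would apply the monotonicity of $B$ to the pairs $\big(z_\gamma^*,\ \gamma^{-1}(\bpt{z}-z_\gamma^*)\big)$ and $(x^*,0)$, obtaining
\[
\inner{z_\gamma^*-x^*}{\gamma^{-1}(\bpt{z}-z_\gamma^*)}\ge 0,
\]
and hence, since $\gamma>0$, $\inner{z_\gamma^*-x^*}{\bpt{z}-z_\gamma^*}\ge 0$. Writing $\bpt{z}-z_\gamma^*=(\bpt{z}-x^*)-(z_\gamma^*-x^*)$ and expanding the inner product, this rearranges to
\[
\inner{z_\gamma^*-x^*}{\bpt{z}-x^*}\ \ge\ \norm{z_\gamma^*-x^*}^2.
\]

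Finally I would expand $\norm{\bpt{z}-z_\gamma^*}^2=\norm{(\bpt{z}-x^*)-(z_\gamma^*-x^*)}^2$ and substitute the previous estimate:
\[
\norm{\bpt{z}-z_\gamma^*}^2=\norm{\bpt{z}-x^*}^2-2\inner{\bpt{z}-x^*}{z_\gamma^*-x^*}+\norm{z_\gamma^*-x^*}^2\le \norm{\bpt{z}-x^*}^2-\norm{z_\gamma^*-x^*}^2\le\norm{\bpt{z}-x^*}^2,
\]
and taking square roots yields the claim. (Equivalently, one may simply note that $J_{\gamma B}$ is firmly nonexpansive and that $x^*$ is a fixed point of $J_{\gamma B}$, so that $\norm{J_{\gamma B}(\bpt{z})-x^*}^2+\norm{\bpt{z}-J_{\gamma B}(\bpt{z})}^2\le\norm{\bpt{z}-x^*}^2$, which contains the assertion.) There is no real obstacle here; the only mildly delicate point is picking the decomposition $\bpt{z}-z_\gamma^*=(\bpt{z}-x^*)-(z_\gamma^*-x^*)$ so that the monotonicity inequality plugs cleanly into the expansion of $\norm{\bpt{z}-z_\gamma^*}^2$.
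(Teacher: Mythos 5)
Your proof is correct: the resolvent characterization $\gamma^{-1}(\bpt{z}-z_\gamma^*)\in B(z_\gamma^*)$ together with $0\in B(x^*)$ and monotonicity of $B$ gives $\inner{z_\gamma^*-x^*}{\bpt{z}-z_\gamma^*}\ge 0$, and your expansion then yields the (even stronger) estimate $\norm{\bpt{z}-z_\gamma^*}^2\le\norm{\bpt{z}-x^*}^2-\norm{z_\gamma^*-x^*}^2$. The paper itself does not reprove this lemma but cites it from \cite[Lemma 3.1]{alv.mon.sva-reg.siam16}, and your argument is essentially the standard one used there (equivalently, firm nonexpansiveness of $J_{\gamma B}$ with $x^*$ a fixed point), so there is nothing to correct.
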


\begin{lemma}\emph{(\cite[Lemma 2.2]{sva-cla.jota14})}
 \label{lm:coco}
 Let $F:\HH\to \HH$ be $\eta$--cocoercive, for some $\eta>0$, and let $z',\widetilde z\in \HH$. Then,
 \[
   F(z')\in F^{\varepsilon}(\widetilde z)\quad\mbox{where}\quad \varepsilon:=\dfrac{\norm{z'-\widetilde z}^2}{4\eta}.
 \]
\end{lemma}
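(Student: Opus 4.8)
The plan is to check directly that $F(z')$ satisfies the inequality defining membership in the enlargement $F^{\varepsilon}(\widetilde z)$. Note first that $F$ is monotone and $\eta^{-1}$-Lipschitz continuous (both immediate consequences of \eqref{eq:f.coco}), hence maximal monotone on $\HH$, so $F^{\varepsilon}$ is well defined via \eqref{eq:def.teps}; moreover, since $F$ is point-to-point, its graph is $\{(u,F(u))\mid u\in\HH\}$. Thus it suffices to prove
\[
 \inner{\widetilde z-u}{F(z')-F(u)}\geq -\varepsilon\qquad\forall\, u\in\HH,
\]
where $\varepsilon=\norm{z'-\widetilde z}^2/(4\eta)$.

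To this end I would fix $u\in\HH$, write $\widetilde z-u=(\widetilde z-z')+(z'-u)$, and split
\[
 \inner{\widetilde z-u}{F(z')-F(u)}=\inner{\widetilde z-z'}{F(z')-F(u)}+\inner{z'-u}{F(z')-F(u)}.
\]
The cocoercivity inequality \eqref{eq:f.coco}, applied to the pair $z',u$, bounds the second summand below by $\eta\norm{F(z')-F(u)}^2$. Writing $w:=F(z')-F(u)$, it then remains to establish the elementary bound $\eta\norm{w}^2+\inner{\widetilde z-z'}{w}\geq -\norm{\widetilde z-z'}^2/(4\eta)$, which is nothing but the completion of the square $\eta\norm{w+(2\eta)^{-1}(\widetilde z-z')}^2\geq 0$. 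Chaining the two displays yields $\inner{\widetilde z-u}{F(z')-F(u)}\geq -\varepsilon$, and since $u\in\HH$ was arbitrary, \eqref{eq:def.teps} gives $F(z')\in F^{\varepsilon}(\widetilde z)$.

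I do not foresee any genuine obstacle: the whole argument consists of a single splitting of the inner product followed by a completion of the square, and the hypothesis $\eta>0$ enters only to keep the resulting bound $-\norm{\widetilde z-z'}^2/(4\eta)$ finite. The one point worth making explicit is that the single-valuedness of $F$ removes any need to quantify over elements of $F(u)$, so that verifying \eqref{eq:def.teps} reduces exactly to the displayed inequality.
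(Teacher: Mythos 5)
Your proof is correct: the splitting $\inner{\widetilde z-u}{F(z')-F(u)}=\inner{\widetilde z-z'}{F(z')-F(u)}+\inner{z'-u}{F(z')-F(u)}$, the cocoercivity bound on the second term, and the completion of the square $\eta\norm{w+(2\eta)^{-1}(\widetilde z-z')}^2\geq 0$ together give exactly the inequality required by \eqref{eq:def.teps}. The paper itself offers no proof, quoting the result from \cite[Lemma 2.2]{sva-cla.jota14}, and your direct verification is essentially the standard argument given there, so there is nothing to add.
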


%\bibliographystyle{plain}
%\bibliography{prox}

\def\cprime{$'$}

\end{document}